\newcommand\myurl[1]{\url{#1}}
\title{An analytic viewpoint on the Hasse principle}
\author{Vler\"e Mehmeti}
\thanks{The author was partially funded by the S\'ephora-Berrebi scholarship.}
\begin{document}

\newtheorem{thm}{Theorem}[section]
\newtheorem{lm}[thm]{Lemma}
\newtheorem{prop}[thm]{Proposition}
\newtheorem{cor}[thm]{Corollary}
\newtheorem{conj}[thm]{Conjecture}
\newtheorem*{cor*}{Corollary}
\newtheorem*{thm*}{Theorem}
\theoremstyle{definition}
\newtheorem{defn}[thm]{Definition}
\newtheorem*{defn*}{Definition}
\newtheorem{conv}[thm]{Convention}
\newtheorem{rem}[thm]{Remark}
\newtheorem*{rem*}{Remark}
\newtheorem{ex}[thm]{Example}
\newtheorem*{ex*}{Example}
\newtheorem{set}[thm]{Setting}
\newtheorem{nota}[thm]{Notation}
\newtheorem{fact}{Statement}
\newtheorem{hy}[thm]{Hypothesis}
\newtheorem{cond}[thm]{Hypothesis}  
\newtheorem{summary}{Summary}
\newtheorem*{ack*}{Acknowledgements}
\newtheorem{cons}{Construction}

\begin{abstract}
Working on non-Archimedean analytic curves, we propose a geometric approach to the study of the Hasse principle over function fields of curves defined over a complete discretely valued field. Using it, we show the Hasse principle to be verified for certain families of projective homogeneous spaces. As a consequence, we prove that said principle holds for quadratic forms and homogeneous varieties over unitary groups,  results originally shown in \cite{ctps}, \cite{wu}, and~\mbox{\cite{parsur}.} 
\end{abstract}

\maketitle


\section*{Introduction}
\subsection*{History} \sloppypar Let $K$ be a number field, and $G/K$ a semisimple simply connected linear algebraic group. By results of Kneser (\cite{kne1}, \cite{kne2}), Harder (\cite{harder1}, \cite{harder2}) and Chernousov~\mbox{(\cite{che})}, we know that the following \emph{Hasse principle} holds: a torsor over~$G$ has a $K$-rational point if and only if it has rational points over all the completions of $K.$ Later on, Harder (\cite{harder3}) proved that a Hasse principle continues to hold if~$K$ is the function field of a curve defined over a finite field, meaning it holds for all global fields. In all instances, the proofs are obtained through case-by-case considerations.

During the last couple of decades, similar questions are being studied over function fields of curves defined over more general fields. In \cite{hhk}, Harbater, Hartmann and Krashen introduced a new technique to the study of such questions: \emph{field patching}. Let $F$ be the function field of a curve defined over a complete discretely valued field. Let $G/F$ be a linear algebraic group that is a rational variety. 
Through field patching, they showed that a \emph{local-global principle} holds for projective homogeneous varieties $X$ over $G$. This means there exist larger fields $F_i, i \in I,$ such that~$X$ has an $F$-rational point if and only if it has~$F_i$-rational points for all $i.$

In terms of the Hasse principle in this setting, in \cite{ctps}, Colliot-Th\'el\`ene, Parimala and Suresh propose the following conjecture:

\begin{conj}[{\cite[Conjecture 1]{ctps}}] \label{conj1}
Let $k$ be a $p$-adic field. Let $F$ be the function field of a smooth projective geometrically integral curve defined over $k$. Let $\Omega$ denote the set of discrete (rank 1) valuations on $F$ which either extend the norm of $k$ or are trivial on $k$. Let $X/F$ be a projective homogeneous space over a connected linear algebraic group $G/F$. Then, $X(F) \neq \emptyset \iff X(F_v) \neq \emptyset \  \forall v \in \Omega$, where~$F_v$ denotes the completion of $F$ with respect to $v$.   
\end{conj}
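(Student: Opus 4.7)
The plan is to translate the arithmetic Hasse principle of Conjecture~\ref{conj1} into a local--global statement on the Berkovich analytification $C^{\mathrm{an}}$ of the curve $C/k$ whose function field is $F$. The valuations in $\Omega$ embed naturally into $C^{\mathrm{an}}$: closed points of $C$ correspond to valuations trivial on $k$, while discrete valuations extending the norm of $k$ correspond to type~2 points associated with irreducible components of the special fibers of integral models of $C$. For each such $v$, the complete residue field $\mathscr{H}(x_v)$ of the associated point $x_v \in C^{\mathrm{an}}$ recovers $F_v$. In this geometric dictionary the conjecture reads: if $X(\mathscr{H}(x)) \neq \emptyset$ for every $x$ in an appropriate subset of $C^{\mathrm{an}}$, then $X(F) \neq \emptyset$.

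I would attack this in two steps. The first is a \emph{local spreading} step: whenever $X(\mathscr{H}(x)) \neq \emptyset$ for some $x \in C^{\mathrm{an}}$, one constructs an affinoid neighborhood $U_x$ on which $X$ already has a section. Smoothness and projectivity of $X$ turn this into a Hensel/implicit-function-type argument on Berkovich curves, pushing a rational point at $x$ to a section on a small enough analytic neighborhood. The compactness of $C^{\mathrm{an}}$ then extracts a finite cover $\{U_i\}$ on each piece of which $X$ is soluble. The second step is an \emph{analytic patching} step: from the sections on the $U_i$ one must glue a global element of $X(F)$. This is meant to be an analytic reworking of the Harbater--Hartmann--Krashen machinery, exploiting that the field of meromorphic functions on $C^{\mathrm{an}}$ is exactly $F$, so that gluing over affinoid intersections should reconstitute an $F$-rational point.

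The crux of the argument---and the main obstacle---is the patching step for arbitrary projective homogeneous spaces under an arbitrary connected linear algebraic group $G$. The HHK technique produces a global point cleanly only when $G$ is rational as a variety, and beyond that one is confronted with cohomological obstructions (weak approximation defects, Brauer--Manin-type classes, failure of $R$-equivalence triviality) that can prevent local sections from assembling into a global one. For the families of $X$ treated in this paper---quadratic forms and homogeneous varieties under unitary groups---the gluing data reduce to quadratic or Hermitian forms, and classical diagonalization together with Witt-style cancellation make the analytic local--global input conclusive on a suitable finite cover of $C^{\mathrm{an}}$. Upgrading this strategy to the full conjecture would require a substantially more delicate analysis of the Galois-cohomological obstructions to patching on the Berkovich curve.
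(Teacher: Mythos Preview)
First, a structural point: the statement you are addressing is labeled a \emph{conjecture} in the paper and is not proved there in full generality. The paper establishes it for particular families (quadratic forms, certain unitary groups) and gives conditional smoothness criteria, but the general statement remains open. So there is no ``paper's own proof'' of the full conjecture to compare against.

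Your outline for the cases the paper does handle contains a genuine gap in Step~1. The hypothesis $X(F_v)\neq\emptyset$ for all $v\in\Omega$ translates, in the Berkovich picture, to $X(\mathscr{M}_x)\neq\emptyset$ only at rigid and type~2 points---these are exactly the points corresponding to \emph{discrete} valuations. You correctly note that each such $x$ admits an affinoid neighborhood $V_x$ with $X(\mathscr{M}(V_x))\neq\emptyset$. But nothing guarantees that $\bigcup_x V_x$ covers $C^{\mathrm{an}}$: there are type~3 and type~4 points, and as a type~2 point approaches one of them the neighborhood $V_x$ may shrink and never contain it. Compactness therefore cannot extract a finite cover on which $X$ is soluble. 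This passage from $\Omega$ to $\overline{\Omega}$ is precisely the implication~(\ref{rel2}) that the paper isolates as the central difficulty, and your sketch does not address it.

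The paper's mechanism for this step is quite different from spreading-plus-compactness. For a point $x$ not already covered, one takes the fiber $U_x=\pi^{-1}(P_x)$ of the specialization map to a suitable regular model; by Bosch's theorem $\mathcal{O}^\circ(U_x)$ is a complete regular local ring of dimension two. Given a smooth proper model $\mathcal{X}$ of $X$ over this ring, one produces a point of $\mathcal{X}(\mathcal{O}^\circ(U_x))$ by localizing at a height-one prime (a discrete valuation on $F$, so the hypothesis applies), applying the valuative criterion of properness, reducing, and then Hensel-lifting back up (Lemma~\ref{1.5.2}). The patching step---your Step~2---is then supplied by \cite[Corollary~3.18]{une}, which indeed requires $G$ rational, a restriction you correctly flag. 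For quadratic and Hermitian forms the required smooth model over $\mathcal{O}^\circ(U_x)$ is obtained via Springer-type decomposition over the two regular parameters (Section~\ref{quad}, Appendix~\ref{appendix}); it is this algebraic reduction, not a compactness argument, that makes those cases go through.
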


In \cite{ctps}, the conjecture is proven in the case of quadratic forms. 
Considerable progress has been made for classical linear algebraic groups in \cite{parsur}, \cite{wu}, \cite{resur} and \cite{guhsur}. In \cite{ctps} and~\cite{parsur}, field patching is a crucial ingredient; this is the case in \cite{cthhkps} as well, where~$G$ is assumed to be defined over $k^{\circ}$-the valuation ring of $k$. The authors provide a counterexample to the conjecture when $k$ is \emph{not} a $p$-adic field.

In \cite{une}, we extend field patching to Berkovich analytic curves and generalize the results of \cite{hhk}. We show that  Conjecture~\ref{conj1} is true provided~$\Omega$ is replaced by a larger set $\overline{\Omega}$ valuations. The result does not depend on $k$ being discretely valued. We show:

\begin{thm}[{\cite[Corollary 3.18]{une}}] \label{theoremval} Let $k$ be a complete ultrametric non-trivially valued field. Let $F$ be the function field of a normal irreducible projective algebraic curve  $C$ over~$k$. Let $\overline{\Omega}$ be the set of all (rank 1) non-trivial valuations $v$ on $F$ such that $v_{|k}$ is either trivial or induces the norm on $k$. Let $X/F$ be a variety on which a rational linear algebraic group~$G/F$ acts \emph{strongly transitively}. Then $$X(F) \neq \emptyset \iff X(F_v) \neq \emptyset \ \forall v \in \overline{\Omega},$$ where $F_v$ denotes the completion of $F$ with respect to $v$. 
\end{thm}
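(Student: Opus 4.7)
The plan is to recast the problem on the Berkovich analytification $C^{\mathrm{an}}$ of $C$. A key feature of this space is that its points $x$ correspond (up to equivalence) precisely to the valuations $v \in \overline{\Omega}$, with the completed residue field $\mathcal{H}(x)$ identifying with the completion $F_v$. Under this dictionary, the hypothesis $X(F_v) \neq \emptyset$ for every $v \in \overline{\Omega}$ translates into the statement $X(\mathcal{H}(x)) \neq \emptyset$ for every $x \in C^{\mathrm{an}}$, which is a purely geometric assumption on the analytic curve.

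First I would propagate rational points from points of $C^{\mathrm{an}}$ to affinoid neighborhoods. Fix $x \in C^{\mathrm{an}}$ and a point $P_x \in X(\mathcal{H}(x))$. Since $\mathcal{H}(x)$ is obtained as the completion of the filtered colimit of rings of analytic functions on affinoid neighborhoods of $x$, a density / approximation argument (in the style of Hensel's lemma, or of analytic approximation on affinoids) should produce a sufficiently small connected affinoid neighborhood $U_x$ of $x$ and a point $P_{U_x} \in X(\mathcal{O}(U_x))$ specializing to $P_x$. By compactness of $C^{\mathrm{an}}$, a finite subcollection $U_1, \ldots, U_n$ of such affinoid neighborhoods covers the whole analytic curve, and each $U_i$ carries a rational point $P_i \in X(\mathcal{O}(U_i))$. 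At this stage one has a local solution everywhere, but the individual $P_i$ may disagree on the overlaps $U_i \cap U_j$.

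Next I would invoke the geometric patching framework developed in \cite{une}. Using strong transitivity of the action of $G$ on $X$, the discrepancy between $P_i$ and $P_j$ on $U_i \cap U_j$ is measured by an element $g_{ij} \in G(\mathcal{O}(U_i \cap U_j))$. The rationality of $G$ enters here via a factorization statement of Harbater--Hartmann--Krashen type, transported to the Berkovich setting: any $g_{ij}$ factors as a product $g_i \cdot g_j^{-1}$ with $g_i \in G(\mathcal{O}(U_i))$ and $g_j \in G(\mathcal{O}(U_j))$. Using these factorizations to modify each $P_i$ within its $G$-orbit, the local points become compatible on overlaps and glue to a global section of $X$ over $\mathcal{O}(C^{\mathrm{an}})$. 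Since $C$ is projective and normal, this global analytic section lies in $X(F)$ by GAGA, as required. The main obstacle in this program is the patching / factorization step: proving the Berkovich-analytic analogue of HHK's factorization for rational groups over the rings of analytic functions on affinoid domains (together with the required cofinality of admissible covers by such affinoids) is both the technical heart of the argument and the place where the rationality of $G$ is genuinely used; the propagation step is comparatively soft once the analytic patching is in place.
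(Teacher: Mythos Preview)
Your outline matches the strategy the paper attributes to \cite{une}: pass to $C^{\mathrm{an}}$ via the point--valuation bijection, spread local solutions to affinoid neighborhoods, extract a finite cover by compactness, and glue via an HHK-type factorization for the rational group $G$, with GAGA at the end.

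One correction to your dictionary step: the identification $\mathcal{H}(x)\cong F_v$ is valid only when $v$ extends the norm on $k$ (non-rigid $x$). For a rigid point $x$, the associated valuation $v$ is trivial on $k$, and $\mathcal{H}(x)$ is the \emph{residue field} of the complete discretely valued field $F_v$, not $F_v$ itself (cf.\ Remark~\ref{3.5}). The object that works uniformly is the meromorphic stalk $\mathscr{M}_x$: one has $X(\mathscr{M}_x)\neq\emptyset \iff X(F_{v_x})\neq\emptyset$ for every $x\in C^{\mathrm{an}}$ (this is \cite[Corollary~3.17]{une}), and the patching theorem in \cite{une} is phrased as $X(F)\neq\emptyset \iff X(\mathscr{M}_x)\neq\emptyset$ for all $x$. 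With $\mathcal{H}(x)$ in place of $\mathscr{M}_x$ your propagation step would not go through at rigid points as stated.
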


We recall:

\begin{defn} \label{st}
The linear algebraic group $G/F$ acts \emph{strongly transitively} on the variety~$X/F$ if for any field extension $L/F$, either $X(L)=\emptyset$ or $G(L)$ acts transitively on~$X(L)$. 
\end{defn}

From \cite[Remark 3.9]{hhk}, if $X/K$ is projective homogeneous over a reductive linear algebraic group $G/K$, then $G$ acts strongly transitively on $X$. This is also the case if~$X$ is a~$G$-torsor.

\subsection*{A geometric approach}
We see from Theorem \ref{theoremval} that in order to prove Conjecture~\ref{conj1} in the case $G$ is rational it suffices to show
\begin{equation}\label{rel1} {X(F_v) \neq \emptyset} \ {\forall v \in \Omega}  \Longrightarrow
 X(F_v) \neq \emptyset \ \forall v \in \overline{\Omega}.
\end{equation} 

Using the notation of Theorem~\ref{theoremval}, let $C^{\mathrm{an}}$ be the Berkovich analytification of $C$, and~$\mathscr{M}$ the sheaf of meromorphic functions on it. From \cite[Prop.~3.6.2]{Ber90}, $F=\mathscr{M}(C^{\mathrm{an}}).$ There exists a bijection $val: C^{\mathrm{an}} \rightarrow \overline{\Omega}$ (\emph{cf.} \cite[Prop. 3.15]{une}), and the set of points on~$C^{\mathrm{an}}$ corresponding to discrete valuations (\emph{i.e.} to the points of $\Omega$) can be relatively well~described.
 In \cite[Cor.~3.17]{une}, we show that for any $x \in C^{\mathrm{an}}$, $X(\mathscr{M}_x) \neq \emptyset$ if and only if $X(F_{\emph{val}(x)}) \neq \emptyset.$
Theorem \ref{theoremval} is proved as a corollary of:
$$X(F) \neq \emptyset \iff X(\mathscr{M}_x) \neq \emptyset \ \forall x \in C^{\mathrm{an}}.$$

Let us suppose  $k$ is discretely valued.
Set $S_{\mathrm{disc}}:=\{x \in C^{\mathrm{an}} : \emph{val}(x) \ \mathrm{is \ discrete}\}.$ Relation (\ref{rel1}) is then equivalent to the following:
\begin{equation}\label{rel2} {X(\mathscr{M}_x) \neq \emptyset} \ {\forall x \in S_{\mathrm{disc}}}  \Longrightarrow
 X(\mathscr{M}_x) \neq \emptyset \ \forall x \in C^{\mathrm{an}}.
\end{equation} 
The study of relation (\ref{rel2}) provides a geometric approach to Conjecture~\ref{conj1} when $G$ is a rational variety, and is the topic of study of this manuscript. 

The set $S_{\mathrm{disc}}$ is \emph{dense} in $C^{\mathrm{an}}.$  Its points can be described topologically: we recall that a Berkovich curve has the structure of a real graph; $S_{\mathrm{disc}}$ contains all its \emph{branching points} (i.e. \emph{type 2} points) and some of its \emph{extreme points} (i.e. \emph{type 1} points).

\subsection*{Main statements}
In this manuscript we show that several groups of varieties satisfy relation (\ref{rel2}), and hence (\ref{rel1}), above. Here is one of the main results we obtain:
\begin{thm}[Corollary \ref{1.7}, Proposition \ref{1.12}] \label{teo1} Let $k$ be a complete discretely valued field. Let $C/k$ be a smooth irreducible projective algebraic curve and $C^{\mathrm{an}}$ its Berkovich analytification. Set $F:=k(C)$--the function field of $C$. Let $X/F$ be a smooth proper variety such that $X(F_v) \neq \emptyset$ for all $v \in \Omega$. 
\begin{enumerate}
\item Let $Q$ be a finite subset of $S_{\mathrm{disc}} \subset C^{\mathrm{an}}$. For $x \in Q$, there exists a neighborhood~$V_x$ of $x$ in $C^{\mathrm{an}}$ such that $X(\mathscr{M}(V_x)) \neq \emptyset.$
Set $U:=C^{\mathrm{an}} \backslash \bigcup_{x \in Q} V_x$. 
If $X$ has a \emph{smooth proper} model over the ring $\mathcal{O}^{\circ}(U):=\{f \in \mathcal{O}(U): |f(x)| \leqslant 1 \ \forall x \in U\},$ then $X(F_v) \neq \emptyset$ for all $v \in \overline{\Omega}$.
\item \begin{sloppypar}
We can construct  open \emph{virtual discs} and open \emph{virtual annuli} $B_1, B_2, \dots, B_n$ in~$C^{\mathrm{an}}$ depending on $X$ such that if $X$ has  proper smooth models over $\mathcal{O}^{\circ}(B_i), {i=1,2,\dots, n}$, then $X(F_v) \neq \emptyset$ for all $v \in \overline{\Omega}$.
\end{sloppypar}
\end{enumerate}

If, in addition, there exists a rational linear algebraic group $G/F$ acting strongly transitively on $X$, then $X(F) \neq \emptyset.$
\end{thm}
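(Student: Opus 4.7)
The plan is to reduce both assertions to verifying implication~(\ref{rel2}), that is, to establishing $X(\mathscr{M}_x)\neq\emptyset$ for every point $x\in C^{\mathrm{an}}$. Once this is done, \cite[Corollary~3.17]{une} delivers $X(F_v)\neq\emptyset$ for all $v\in\overline{\Omega}$, and the final conclusion $X(F)\neq\emptyset$ in the presence of a rational linear algebraic group $G$ acting strongly transitively on $X$ becomes an immediate application of Theorem~\ref{theoremval}. So the real content lies in Parts~(1) and~(2).

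For Part~(1), the points $x\in\bigcup_{y\in Q}V_y$ are disposed of at once: the hypothesis $X(\mathscr{M}(V_y))\neq\emptyset$ supplies a morphism $\mathrm{Spec}\,\mathscr{M}(V_y)\to X$ which, composed with the canonical restriction $\mathscr{M}(V_y)\to\mathscr{M}_x$, yields an element of $X(\mathscr{M}_x)$. Only $x\in U$ requires work. For such an $x$, the natural ring map $\mathcal{O}^{\circ}(U)\to\mathscr{M}_x^{\circ}$ into the rank-$1$ valuation ring associated with $x$ allows us to base change the given smooth proper model $\mathcal{X}\to\mathrm{Spec}\,\mathcal{O}^{\circ}(U)$ to a smooth proper scheme $\mathcal{X}_x$ over $\mathscr{M}_x^{\circ}$ whose generic fibre is $X_{\mathscr{M}_x}$. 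Since $\mathscr{M}_x^{\circ}$ is a rank-$1$ valuation ring, the valuative criterion of properness identifies sections of $\mathcal{X}_x$ over $\mathscr{M}_x^{\circ}$ with elements of $X(\mathscr{M}_x)$, so the task reduces to producing such a section.

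To produce it, I would combine three ingredients: (a)~Henselianness of the analytic local ring at $x$, a standard property of Berkovich spaces, so that Hensel's lemma applies; (b)~smoothness of $\mathcal{X}_x/\mathscr{M}_x^{\circ}$, which permits lifting of any rational point of the special fibre to an honest section; and (c)~for each $y\in\Omega\cap U$, the hypothesis $X(F_{\mathrm{val}(y)})\neq\emptyset$ combined with properness already furnishes a section of $\mathcal{X}$ over $\widehat{\mathscr{M}_y^{\circ}}$, and in particular a rational point on the reduction $\widetilde{\mathcal{X}}_y$. The essential step is to transfer these discrete-valuation rational points to the special fibre of $\mathcal{X}_x$ for an arbitrary $x\in U$; I would carry this out by viewing $\widetilde{\mathcal{X}}\to\mathrm{Spec}\,\widetilde{\mathcal{O}^{\circ}(U)}$ as a smooth proper morphism of $\widetilde{k}$-schemes of finite type and tracking rational points along the specialisation map $U\to\mathrm{Spec}\,\widetilde{\mathcal{O}^{\circ}(U)}$, using the density of $S_{\mathrm{disc}}\cap U$ to ensure that every closed point of the reduction is reached. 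This residue-field transfer --- making precise how rational points propagate through a smooth proper family over an analytic local ring --- is the step I expect to be the main technical obstacle.

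For Part~(2), the strategy is to exploit the structure theory of Berkovich analytic curves: after removing a finite set $T$ of type-$2$ points depending on $X$ (e.g.\ vertices of a skeleton of a semistable model adapted to $X$), the complement $C^{\mathrm{an}}\setminus T$ decomposes as a disjoint union of open virtual discs and open virtual annuli, and we take these to be $B_1,\dots,B_n$. The hypothesis and \cite[Corollary~3.17]{une} immediately give $X(\mathscr{M}_x)\neq\emptyset$ for $x\in T\subset S_{\mathrm{disc}}$, while the argument of Part~(1) applied with $\mathcal{O}^{\circ}(B_i)$ in place of $\mathcal{O}^{\circ}(U)$ handles every $x\in\bigcup_i B_i$. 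Concatenating the two conclusions yields $X(\mathscr{M}_x)\neq\emptyset$ for every $x\in C^{\mathrm{an}}$, from which the theorem follows as explained in the first paragraph.
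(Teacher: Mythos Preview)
Your reduction of the problem to verifying $X(\mathscr{M}_x)\neq\emptyset$ for all $x\in C^{\mathrm{an}}$ is correct, as is your treatment of the points $x\in\bigcup_y V_y$ and the final invocation of Theorem~\ref{theoremval}. However, your handling of the remaining points $x\in U$ has a genuine gap at precisely the spot you flag, and it is not merely a technical obstacle: the ``residue-field transfer'' you describe does not go through as stated, and the paper's proof uses a substantially different mechanism.

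Your proposal is to work over the rank-$1$ valuation ring $\mathscr{M}_x^{\circ}$ and to produce, via density of $S_{\mathrm{disc}}$ and some specialisation map $U\to\mathrm{Spec}\,\widetilde{\mathcal{O}^{\circ}(U)}$, a rational point on the special fibre that can then be Hensel-lifted. The difficulty is that $\mathcal{O}^{\circ}(U)$ is not a local ring, so there is no single special fibre to speak of, and for a given $x$ there is no evident ring homomorphism connecting the residue field of $\mathscr{M}_x^{\circ}$ with that of a nearby discrete point $y$; density of $S_{\mathrm{disc}}$ alone does not furnish such a map. The paper resolves this by \emph{not} working over $\mathscr{M}_x^{\circ}$ at all. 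Instead one chooses a proper regular model $\mathscr{C}$ of $C$ whose associated vertex set contains $\bigcup_{y\in Q}\partial V_y$, sets $P_x:=\pi(x)$ and $U_x:=\pi^{-1}(P_x)\subseteq U$, and invokes Bosch's theorem to identify $\mathcal{O}^{\circ}(U_x)$ with $\widehat{\mathcal{O}_{\mathscr{C},P_x}}$, a \emph{two-dimensional} complete regular local ring. If $(\alpha,\beta)$ is a regular system of parameters, then $\mathcal{O}^{\circ}(U_x)_{(\alpha)}$ is a DVR whose completed fraction field is a completion $F_v$ with $v\in\Omega$; the hypothesis gives $X(F_v)\neq\emptyset$, the valuative criterion gives a point over the DVR, reducing modulo $\alpha$ gives a point over the fraction field of the DVR $\mathcal{O}^{\circ}(U_x)/(\alpha)$, and a second application of the valuative criterion yields a point over the residue field $\mathcal{O}^{\circ}(U_x)/(\alpha,\beta)$. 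This finally Hensel-lifts to $\mathcal{X}(\mathcal{O}^{\circ}(U_x))$ since $(\mathcal{O}^{\circ}(U_x),(\alpha))$ is a Henselian pair. The two-dimensional regular structure is what manufactures, from the arbitrary point $x$, an explicit discrete valuation in $\Omega$ to which the hypothesis applies; this is the idea your proposal is missing.

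For Part~(2), your construction also slips: removing a triangulation $T$ from $C^{\mathrm{an}}$ produces \emph{infinitely} many connected components in general, not finitely many $B_1,\dots,B_n$. The paper first picks, for each $s$ in the vertex set $S$, a neighbourhood $U_s$ with $X(\mathscr{M}(U_s))\neq\emptyset$ (this is where the dependence on $X$ enters); only finitely many components $A_1,\dots,A_n$ of $C^{\mathrm{an}}\setminus S$ fail to lie inside $\bigcup_s U_s$, and it is these (after removing $\bigcup_z V_z$) that become the $B_i$. The argument for $x\in B_i$ then proceeds exactly via the two-dimensional local-ring mechanism described above.
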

The proof of (1) is based on topological considerations of~$C^{\mathrm{an}}$, as well as the nature of the rings $\mathcal{O}^{\circ}(V)$ for certain open subsets $V.$ Part (2) is then obtained as a consequence.  

Let $\mathcal{C}$ be a regular proper model of the algebraic curve $C/k$ over the valuation ring~$k^{\circ}$ of $k$. 
In~\cite{ber}, Berkovich constructed a \emph{specialization morphism} $\pi: C^{\mathrm{an}} \rightarrow \mathcal{C}_s$, where $\mathcal{C}_s$ is the special fiber of $\mathcal{C}$. Using it and a result of Bosch (see Theorem \ref{bosh}), we can interpret Theorem \ref{teo1} over models of $C$. We prove conjecture \ref{conj1} provided $X$ has \emph{proper smooth} models over $\mathcal{O}_{\mathscr{C}, Q_i}$, $i=1,2,\dots, n$, where the $Q_i$ are closed points of a fine enough regular proper model $\mathscr{C}$ depending on $X$ (\emph{cf.} Remark \ref{ecuditshme}).
These smoothness results can also be interpreted over the residue fields of the completions of $F$ (Theorem \ref{3.6}).


Combining Theorem \ref{teo1} with a theorem of Springer, we prove Conjecture~\ref{conj1} for quadratic forms, a result originally shown in \cite{ctps}. A quadratic form defined over a field~$K$ is said to be $K$-\emph{isotropic} if it has a non-trivial zero over $K$.  We continue using the same notations as in Theorem \ref{teo1}. Let us remark here that the case of residue characteristic two remains unknown. 
\begin{thm}[Theorem \ref{4.5}]
Suppose $\mathrm{char} \ \widetilde{k} \neq 2$, where $\widetilde{k}$ denotes the residue field of~$k$.  Let $q$ be a quadratic form defined over $F$. Then $q$ is $F_v$-isotropic for all $v \in \Omega$ if and only if it is $F_v$-isotropic for all $v \in \overline{\Omega}$. Consequently, if $\dim{q}>2$, $q$ is isotropic over $F$ if and only if it is isotropic over $F_v$ for all $v \in \Omega$.
\end{thm}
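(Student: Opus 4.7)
The plan is to apply Theorem~\ref{teo1} to the projective quadric $X_q:=\{q=0\}\subset\mathbb{P}^{n-1}_F$, using Springer's theorem to supply the local data and to build the required smooth proper model. Once $X_q(F_v)\neq\emptyset$ for every $v\in\overline\Omega$ is established, the ``consequently'' clause will follow because $SO(q)$ is a rational linear algebraic group acting strongly transitively on $X_q$ whenever $\dim q>2$, so the concluding assertion of Theorem~\ref{teo1} applies; the converse direction of the equivalence is trivial since $\Omega\subset\overline\Omega$.

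First I would diagonalise $q=\langle a_1,\dots,a_n\rangle$ with $a_i\in F^\times$ and record the basic observation that, since $\mathrm{char}\,\widetilde k\neq 2$, a diagonal quadratic form with all entries in $R^\times$ defines a smooth proper quadric over the base ring $R$. Via the bijection $\mathrm{val}$ and \cite[Corollary 3.17]{une}, the hypothesis that $q$ is $F_v$-isotropic for every $v\in\Omega$ rephrases as $X_q(\mathscr{M}_x)\neq\emptyset$ for every $x\in S_{\mathrm{disc}}$; in particular, since $\mathscr{M}_x=\varinjlim_{V\ni x}\mathscr{M}(V)$, this produces for each such $x$ a neighborhood $V_x$ of $x$ with $X_q(\mathscr{M}(V_x))\neq\emptyset$, which is the second ingredient of Theorem~\ref{teo1}(1).

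The input from Springer controls the form at the type~2 points. At each $x\in S_{\mathrm{disc}}$ the completion $\widehat{\mathscr{M}_x}$ is a complete discretely valued field of residue characteristic $\neq 2$, and the assumed isotropy yields a Springer decomposition $q\cong q_1^{(x)}\perp\pi_xq_2^{(x)}$ whose components have entries in $\widehat{\mathscr{M}_x^\circ}^{\times}$ and at least one of whose residues $\overline{q_i^{(x)}}$ is isotropic over $\widetilde{\mathscr{H}(x)}$. I would take $Q\subset S_{\mathrm{disc}}$ to be the finite set of zeros and poles of $a_1,\dots,a_n$, enlarged by finitely many type~2 points chosen so that, for sufficiently small $V_x$, the complement $U:=C^{\mathrm{an}}\setminus\bigcup_{x\in Q}V_x$ decomposes into a disjoint union of virtual open discs and annuli. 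The existence of such a $Q$ follows from the theory of triangulations of Berkovich curves.

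The heart of the proof is to exhibit a smooth proper model of $X_q$ over $\mathcal{O}^{\circ}(U)$. On each virtual disc or annulus $B$ appearing in $U$, Weierstrass-type factorisation writes every meromorphic function as a unit of $\mathcal{O}(B)$ times a product of monomials in a chosen parameter; after rescaling each $a_i$ by a square in $F^\times$, one obtains a diagonalisation $q\cong\langle u_1,\dots,u_n\rangle$ whose entries lie in $\mathcal{O}^{\circ}(B)^{\times}$. The Springer decompositions attached to the type~2 points of $Q$ dictate how these unit diagonalisations match across adjacent pieces, producing a coherent diagonalisation of $q$ over $F$ whose entries are units of $\mathcal{O}^{\circ}(U)$, and hence the required smooth proper quadric model. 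Theorem~\ref{teo1}(1) then delivers $X_q(F_v)\neq\emptyset$ for all $v\in\overline\Omega$, proving the main equivalence, and for $\dim q>2$ the final clause upgrades this to $X_q(F)\neq\emptyset$. The principal obstacle I foresee is exactly this global matching step: producing a single diagonalisation whose entries are \emph{simultaneously} units of $\mathcal{O}^{\circ}(U)$, which requires a delicate coordination of the local Springer components $q_i^{(x)}$ across the points of $Q$ via the analytic structure of $C^{\mathrm{an}}$. Should the gluing prove recalcitrant, Theorem~\ref{teo1}(2) offers a flexible fallback, requiring only smooth models on each virtual disc or annulus independently and thus removing the need for a global match.
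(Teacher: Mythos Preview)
Your proposal contains a genuine gap at its central step: the claim that, after rescaling by squares in $F^\times$, one can obtain a diagonalisation $\langle u_1,\dots,u_n\rangle$ with all $u_i\in\mathcal{O}^\circ(B)^\times$ is false in general. On a virtual annulus with coordinate $T$, a coefficient like $a_i=T$ (or $a_i=t$, the uniformiser of $k$) has $|a_i|_x\neq 1$ on $B$ and cannot be corrected by any square; the paper itself points this out in Example~\ref{1.8.1}, where it observes that smoothness of the quadric over $\mathcal{O}^\circ(B_i)$ forces $|a_j|_x=1$ for every $x\in B_i$, a very restrictive condition. Consequently Theorem~\ref{teo1} cannot be applied directly to $X_q$, and neither part (1) nor the fallback (2) saves you, since the obstruction is already local on each $B_i$. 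Your ``global matching'' difficulty is therefore not the real obstacle; the real obstacle is that no smooth model of the \emph{full} quadric exists over these rings.

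The paper's proof circumvents this by not trying to make $q$ itself smooth. Instead it works pointwise: for $x\notin\bigcup_z V_z\cup S$ it passes to an sncd model, so that $\mathcal{O}^\circ(U_x)\cong\widehat{\mathcal{O}_{\mathscr{C},P_x}}$ is a two-dimensional complete regular local ring with regular parameters $\alpha,\beta$. Corollary~\ref{4.4} then writes each $a_i$ as a unit times a monomial in $\alpha,\beta$, yielding $q\cong q_1\perp\alpha q_2\perp\beta q_3\perp\alpha\beta q_4$ with the $q_j$ unit-diagonal. Springer's theorem is applied \emph{twice}, once with respect to $(\alpha)$ and once with respect to $(\beta)$, to isolate a single $q_j$ that is isotropic over the residue field $\mathcal{O}^\circ(U_x)/(\alpha,\beta)$; since this $q_j$ \emph{does} define a smooth quadric over $\mathcal{O}^\circ(U_x)$, Hensel lifts the isotropy to $\mathcal{O}^\circ(U_x)$, hence to $\mathscr{M}_x$. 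The missing idea in your proposal is precisely this: use Springer not to produce local solutions to be glued, but to pass from $q$ to a smooth \emph{subform} over the two-dimensional local ring at each point.
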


More generally, if $X$ satisfies such a Springer-type theorem, by combining it with Theorem \ref{teo1}, it should amount to a proof of Conjecture \ref{conj1}. In \cite{larmour}, such a ``Springer-type result'' is shown for Hermitian forms by Larmour. One can then show Theorem~\ref{7.5}, a Hasse principle for homogeneous spaces over unitary and special unitary linear algebraic groups. This result was already shown in \cite{wu} and \cite{parsur}. We merely translate the proof to our setting.

By relying heavily on the structure of Berkovich analytic curves, something can also be said about \emph{constant varieties}.  

\begin{thm}[Theorem \ref{5.6}] Suppose $k$ is a complete non-trivially valued ultrametric field.
Let $F$ be the function field of a smooth connected projective algebraic curve~$C/k$. Let~$X$ be a variety \emph{defined over~$k$}. Suppose there exists a rational linear algebraic group~${G/F}$ acting strongly transitively on ${X \times_k F}$. Under certain conditions on $C$,  if ${X(F_v) \neq \emptyset}$ for all $v \in \Omega$, then
\begin{enumerate}
\item if the value group $|k^{\times}|$ is dense in $\mathbb{R}_{>0}$, then $X(F) \neq \emptyset$;
\item if $k$ is discretely valued, then~$X$ has a zero cycle of degree one over $F$.
\end{enumerate}
\end{thm}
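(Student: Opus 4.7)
The plan is to derive the theorem from Theorem \ref{teo1} applied to the constant $F$-variety $X_F := X \times_k F$, exploiting the fact that because $X$ is defined over $k$, models of $X_F$ over rings of the form $\mathcal{O}^{\circ}(V)$ (for $V \subset C^{\mathrm{an}}$ open) come for free: if $\mathscr{X}$ is a proper smooth model of $X$ over $k^{\circ}$, then the base change $\mathscr{X} \times_{k^{\circ}} \mathcal{O}^{\circ}(V)$ is automatically a proper smooth model of $X_F$ over $\mathcal{O}^{\circ}(V)$ for every such $V$. This removes the nontrivial model-existence hypothesis of Theorem \ref{teo1}(2) and turns the problem into one purely about the analytic geometry of $C^{\mathrm{an}}$.

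The first step is local. For each $x \in S_{\mathrm{disc}}$, the hypothesis $X(F_{\mathrm{val}(x)}) \neq \emptyset$, combined with \cite[Corollary 3.17]{une}, gives $X(\mathscr{M}_x) \neq \emptyset$, and a standard spreading-out argument supplies an open neighborhood $V_x$ of $x$ on which $X$ acquires an $\mathscr{M}(V_x)$-point. Using the graph structure of $C^{\mathrm{an}}$ together with a triangulation adapted to this data, I would then select a finite set $Q \subset S_{\mathrm{disc}}$ so that the complement $C^{\mathrm{an}} \setminus \bigcup_{x \in Q} V_x$ decomposes as a disjoint union of open virtual discs and annuli $B_1, \ldots, B_n$. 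The ``certain conditions on $C$'' mentioned in the statement are exactly what guarantees the existence of such a decomposition. Combined with the base-change observation of the previous paragraph, Theorem \ref{teo1}(2) then produces $X_F(F_v) \neq \emptyset$ for every $v \in \overline{\Omega}$.

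Case (1), with $|k^{\times}|$ dense in $\mathbb{R}_{>0}$, should follow immediately: density permits the $B_i$ to be taken as honest open discs and annuli over $k$ itself, and Theorem \ref{theoremval}, via the rationality of $G$ and its strong transitivity on $X_F$, yields $X(F) = X_F(F) \neq \emptyset$. Case (2), with $k$ discretely valued, is more delicate because the discreteness of $|k^{\times}|$ forces some of the $B_i$ to become honest discs or annuli only after base change to finite extensions $k_i/k$. One therefore produces, from the points over $F \otimes_k k_i$ thus obtained, closed points of $X_F$ whose residue degrees over $F$ are controlled by the $[k_i:k]$, and assembles these closed points into a zero cycle of degree one by a gcd argument on the collection of residue degrees, which can be arranged to have greatest common divisor one by varying the base extensions.

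The main obstacle, and the real content of the theorem, lies in the geometric step of triangulating $C^{\mathrm{an}}$: pinning down the precise ``conditions on $C$'' under which the required decomposition into $B_i$ can be performed over $k$ (in case 1) or over controlled extensions $k_i/k$ (in case 2), and, in case (2), ensuring that the family of degrees $[k_i:k]$ which arise is coprime. Once that geometric input is in place, the rest of the argument -- base change of $\mathscr{X}$, invocation of Theorems \ref{teo1} and \ref{theoremval}, and the final ramification-index arithmetic behind the zero cycle -- is essentially formal.
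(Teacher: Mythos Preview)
Your approach has a genuine gap. You write ``if $\mathscr{X}$ is a proper smooth model of $X$ over $k^{\circ}$, then the base change $\mathscr{X} \times_{k^{\circ}} \mathcal{O}^{\circ}(V)$ is automatically a proper smooth model of $X_F$ over $\mathcal{O}^{\circ}(V)$,'' and then treat this as removing the model-existence hypothesis of Theorem~\ref{teo1}. But the theorem does not assume that $X$ has a smooth proper model over $k^{\circ}$; $X$ is merely a variety over $k$. A $k$-variety with bad reduction (e.g.\ an elliptic curve with multiplicative reduction) has no such model, and your argument collapses at exactly this point. So the step you describe as ``essentially formal'' is in fact the whole problem.

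The paper's proof uses an entirely different mechanism, explicitly flagged at the start of Section~\ref{seksioni5} as unrelated to the smoothness-criterion techniques of Sections~\ref{section1}--\ref{seksioni3}. The ``certain conditions on $C$'' are that a triangulation $S$ exists whose complementary virtual discs and annuli embed in $\mathbb{P}^{1,\mathrm{an}}_k$. For each such piece $L$, and each point $x \in L$ where one does not yet know $X(\mathscr{M}_x)\neq\emptyset$, the paper constructs an automorphism $\varphi$ of $L$ (a translation $T \mapsto T+w$ on the ambient $\mathbb{P}^1$) sending $x$ into the neighborhood $U$ of $\partial L$ where $X$ is already known to have an $\mathscr{M}(U)$-point. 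Because $\varphi$ is defined over $k$ (or a finite extension $l/k$) but not over $F$, the induced isomorphism $\mathscr{M}_x \cong \mathscr{M}_{\varphi(x)}$ only respects $k$-structure; this is precisely why $X$ must be constant. The dichotomy between dense and discrete $|k^\times|$ arises because the translation parameter $w$ must have norm in a prescribed open interval: if $|k^\times|$ is dense one finds $w\in k$, while in the discrete case one passes to extensions $l/k$ of degree coprime to any fixed integer (Lemma~\ref{extensions}), and your intuition about the gcd argument for the zero cycle is then correct.
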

The conditions on $C$ are satisfied for example by curves with semi-stable reduction over~$k$ (in the discretely valued case) and Mumford curves (in general).
As for the zero cycles, there has been an extensive study of varieties for which having a zero cycle of degree one is equivalent to having a rational point. This is in particular true for quadratic forms.

\subsection*{Structure of the manuscript}
The first section contains some preliminaries and the notation we use. In Section \ref{section1}, we work over Berkovich analytic curves, and use the structure of the rings $\mathcal{O}^{\circ}(U)$ for a well-chosen open $U$ to prove a local result on the existence of rational points. The main results here are Theorem~\ref{1.5} and Proposition~\ref{1.12}.

In Section \ref{seksioni2}, we translate the results of Section \ref{section1} over models of algebraic curves.  We prove Theorems \ref{2.5} and \ref{2.13}. In Section \ref{seksioni3}, we interpret the smoothness assumptions of the previous sections over residue fields of completions of the field~$F$; we prove Theorem~\ref{3.6}.
In Section \ref{quad}, we use the techniques of Section \ref{section1} and a theorem of Springer to prove  Conjecture \ref{conj1} for quadratic forms (Theorem~\ref{4.5}). 

In Section \ref{seksioni5}, we study Conjecture \ref{conj1} for constant varieties. We construct automorphisms of the analytic curve to prove that relation (\ref{rel2}) above is satisfied. The techniques used are of different nature from those of the previous parts. The main statement here is Theorem~\ref{5.6}. 

Finally, in Appendix \ref{appendix}, we add another example to which the techniques of Section~\ref{section1} apply: homogeneous varieties over (special) unitary groups, see Theorem~\ref{7.5}.

\subsection*{Acknowledgments}
The author is grateful to Antoine Ducros, David Harari, and J\'er\^ome Poineau for insightful discussions  during the preparation of this manuscript. Many thanks also to the referee whose suggestion  considerably simplified the proof of Theorem \ref{2.13}.

\section{Preliminaries and Conventions}

\subsection{Preliminaries}
Let $k$ be a complete discretely valued field,  $k^{\circ}$ its valuation ring, and~$\widetilde{k}$ its residue field. Given a Berkovich space $X$, we denote by $\mathcal{H}(x)$ the \emph{completed residue field} of $x \in X$ (\emph{cf.} \cite[Rem. 1.2.2]{Ber90}), obtained by completing $\kappa(x)$-the residue field of $x$. Given $f \in \mathcal{O}(X)$, we denote by $f(x)$ the image of $f$ under the natural map $\mathcal{O}(X) \rightarrow \mathcal{H}(x)$, and by $|f(x)|$ its norm in the complete ultrametric field $\mathcal{H}(x)$.

\subsubsection{The specialization morphism (\emph{cf.}  \cite[Section 1]{ber})} \label{spec} 

\

\emph{(1) The affine case.} \label{para} Let $\mathscr{X}=\text{Spec} \ A$ be a flat finite type scheme over $k^{\circ}.$ The formal completion $\widehat{\mathscr{X}}$ of $\mathscr{X}$ along its special fiber is $\text{Spf}(\widehat{A}),$ where $\widehat{A}$ is a topologically finitely presented ring over $k^{\circ}$ (\emph{cf.} \cite[pg. 541]{ber}). The \textit{analytic generic fiber} of $\widehat{\mathscr{X}},$ denoted by $\widehat{\mathscr{X}}_{\eta},$ is defined to be ${\mathcal{M}(\widehat{A} \otimes_{k^{\circ}} k)},$ where $\mathcal{M}(\cdot)$ is the \textit{Berkovich spectrum} (see \cite[1.2]{Ber90}). 
 
There exists a \textit{specialization morphism} $\pi: \widehat{\mathscr{X}}_{\eta} \rightarrow \widehat{\mathscr{X}}_s,$ where $\widehat{\mathscr{X}}_s$ is the special fiber of~$\widehat{\mathscr{X}},$ which is an \emph{anti-continuous} function, meaning the pre-image of a closed subset is open. We remark that $\widehat{\mathscr{X}}_s=\mathscr{X}_s,$ where $\mathscr{X}_s:=\mathrm{Spec}(A \otimes_{k^{\circ}} \widetilde{k})$ is the special fiber of $\mathscr{X}.$ Let us describe ~$\pi$ explicitly. There are embeddings $A \hookrightarrow \widehat{A} \hookrightarrow (\widehat{A} \otimes_{k^{\circ}} k)^{\circ},$ where $(\widehat{A} \otimes_{k^{\circ}} k)^{\circ}$ is the set of all elements ~$f$ of $\widehat{A} \otimes_{k^{\circ}} k$ for which $|f(x)| \leqslant 1$ for all $x \in \mathcal{M}(\widehat{A} \otimes_{k^{\circ}} k).$ Let $x \in \mathcal{M}(\widehat{A} \otimes_{k^{\circ}} k).$ This point determines a bounded morphism $A \rightarrow \mathcal{H}(x)^{\circ},$ which induces an application ${\varphi_x : A \otimes_{k^{\circ}} \widetilde{k} \rightarrow \widetilde{\mathcal{H}(x)}}.$ Then, $\pi(x):=\ker{\varphi_x}.$

\begin{rem} \label{berred}
In \cite[2.4]{Ber90}, Berkovich constructs a \emph{reduction map} $r : \mathcal{M}(\widehat{A} \otimes_{k^{\circ}} k) \rightarrow \mathrm{Spec}(\widetilde{\widehat{A} \otimes_{k^{\circ}} k});$ here $\widetilde{\widehat{A} \otimes_{k^{\circ}} k}:=(\widehat{A} \otimes_{k^{\circ}} k)^{\circ}/(\widehat{A} \otimes_{k^{\circ}} k)^{\circ\circ}$, where $(\widehat{A} \otimes_{k^{\circ}} k)^{\circ\circ}$ is the set of $f \in \widehat{A} \otimes_{k^{\circ}} k$ such that for $x \in \mathcal{M}(\widehat{A} \otimes_{k^{\circ}} k)$, $|f(x)|<1$.  If $A$ is a normal domain, then the canonical morphism $\phi: \mathrm{Spec}(\widetilde{\widehat{A} \otimes_{k^{\circ}} k}) \rightarrow \mathrm{Spec}(A \otimes_{k^{\circ}} \widetilde{k})$ is a bijection and $\pi=\phi \circ r$. This means that some of the properties shown for $r$ in \cite[2.4]{Ber90} remain true for $\pi$. This is shown in \cite[Rem. 4.8]{mart} by using \cite[Thm. 2.1]{mart}; See also \cite[Prop.~4.1]{une} for a proof in this setting. 
\end{rem}
\emph{(2) The proper case.}
The construction from (1) has good gluing properties. Let $\mathscr{X}$ be a finite type scheme over ~$k^{\circ},$ and $\widehat{\mathscr{X}}$ its formal completion along the special fiber. The \textit{analytic generic fiber} $\widehat{\mathscr{X}}_\eta$ of $\widehat{\mathscr{X}}$ is the $k$-analytic space we obtain by gluing the analytic generic fibers of an open affine cover of $\widehat{\mathscr{X}}.$ If $\mathscr{X}$ is proper, then $\mathscr{X}_k^{\mathrm{an}}=\widehat{\mathscr{X}}_{\eta}$, where~$\mathscr{X}_k^{\mathrm{an}}$ is the Berkovich analytification of $\mathscr{X}_k:=\mathscr{X} \times_{k^{\circ}} k$ (\emph{cf.} \cite[2.2.2]{muni}). There exists an anti-continuous \textit{specialization morphism} $\pi: \mathscr{X}_k^{\mathrm{an}} \rightarrow \mathscr{X}_s,$ where $\mathscr{X}_s$ is the special fiber of~$\mathscr{X}.$ 

\subsubsection{The Theorem of Bosch}

For a $k$-analytic space $X$, let $\mathcal{O}^{\circ}_X$ be the subsheaf of~$\mathcal{O}_X$ defined by 
\begin{equation} \label{ozero}
\mathcal{O}_X^{\circ}(U):=\{f \in \mathcal{O}_X(U): |f|_x \leqslant 1 \ \forall x \in U\}\end{equation}
for any open $U$ of $X$. 
When there is no risk of ambiguity, we will simply write~$\mathcal{O}^{\circ}$.

Let $\mathscr{C}$ be a flat normal irreducible proper $k^{\circ}$-analytic curve. Let $\mathscr{C}_k$ be its generic fiber, and  $\mathscr{C}_s$ its special fiber. Let $k(\mathscr{C}_k)$ be  the function field of $\mathscr{C}_k$. The specialization morphism constructed in Section \ref{spec} is an anti-continuous map $\pi: C \rightarrow \mathscr{C}_s$, where~$C$ denotes the Berkovich analytification of $\mathscr{C}_k$.    

\begin{thm}[{\cite[Thm. 5.8]{bo1}, \cite[Thm. 3.1]{mart}}] \label{bosh}
Let $P \in \mathscr{C}_s$ be a closed point. Then $\widehat{\mathcal{O}_{\mathscr{C}, P}}=\mathcal{O}^{\circ}(\pi^{-1}(P))$, where $\widehat{\mathcal{O}_{\mathscr{C}, P}}$ is the completion of the local ring $\mathcal{O}_{\mathscr{C}, P}$ with respect to its maximal ideal. 
\end{thm}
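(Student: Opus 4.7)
The plan is local: choose an affine open $\mathrm{Spec}\, A \subset \mathscr{C}$ containing $P$, with maximal ideal $\mathfrak{m} \subset A$ corresponding to $P$, so that $\mathcal{O}_{\mathscr{C}, P} = A_{\mathfrak{m}}$. By anti-continuity of $\pi$, the preimage $\pi^{-1}(P)$ is an open subset of the analytic generic fiber $\widehat{\mathrm{Spec}\, A}_{\eta}$. From the explicit description recalled in \ref{para}, a point $x$ lies in $\pi^{-1}(P)$ exactly when the reduction $\widetilde{\varphi_x}\colon A \otimes_{k^{\circ}} \widetilde{k} \to \widetilde{\mathcal{H}(x)}$ has kernel $\overline{\mathfrak{m}}$. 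In particular, any $s \in A \setminus \mathfrak{m}$ satisfies $|s(x)| = 1$ on $\pi^{-1}(P)$: the inclusion $A \hookrightarrow (\widehat{A} \otimes_{k^{\circ}} k)^{\circ}$ gives $|s(x)| \leqslant 1$, while the nonvanishing of $\widetilde{\varphi_x}(\bar{s})$ rules out strict inequality. Thus $A_{\mathfrak{m}}$ already embeds into $\mathcal{O}^{\circ}(\pi^{-1}(P))$.

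The next step is to extend this embedding to a ring homomorphism $\Phi\colon \widehat{A_{\mathfrak{m}}} \to \mathcal{O}^{\circ}(\pi^{-1}(P))$. I would exhaust $\pi^{-1}(P)$ by a nested family of compact affinoid tubes $T_n = \{x : |g_i(x)| \leqslant r_n,\ i = 1,\dots,r\}$ for generators $g_1,\dots,g_r$ of $\mathfrak{m}$ and $r_n \to 0$, and observe that every element of $\mathfrak{m}^N A_{\mathfrak{m}}$ has sup-norm at most $r_n^N$ on $T_n$ (using the previous paragraph to bound the denominators). A Cauchy sequence in the $\mathfrak{m}$-adic topology then converges uniformly on each $T_n$ and defines a global bounded section. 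Injectivity of $\Phi$ should follow from the fact that a nonzero element of $A_{\mathfrak{m}}$ cannot vanish on the nonempty open $\pi^{-1}(P)$ of the normal irreducible analytic curve $C^{\mathrm{an}}$; Krull's intersection theorem in the Noetherian local ring $\widehat{A_{\mathfrak{m}}}$ then upgrades this to the completion.

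The main obstacle will be surjectivity: given a bounded analytic function $g$ on $\pi^{-1}(P)$, one must reconstruct its $\mathfrak{m}$-adic expansion. My approach is to exploit the structure of the affinoid algebras $\mathcal{O}(T_n)$, which by Noether/Weierstrass normalization are finite over suitable Tate algebras, to extract on each $T_n$ a polynomial approximation of $g$ with coefficients in $A_{\mathfrak{m}}$ modulo $\mathfrak{m}^N$, and then check that these truncations stabilize coherently as $n$ grows in order to define a preimage in $\widehat{A_{\mathfrak{m}}}$. This is the technical core of the theorem and it is where the normality of $\mathscr{C}$ enters crucially, ensuring that $\widehat{A_{\mathfrak{m}}}$ is well-behaved enough for such a reconstruction; it is carried out in full in \cite{bo1}, with an alternative argument in \cite{mart}. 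In the toy case where $P$ is a smooth point of $\mathscr{C}_s$, the formal fiber $\pi^{-1}(P)$ is an open unit disc and surjectivity reduces to the classical identification of bounded analytic functions on an open disc with power series in a local parameter having bounded coefficients.
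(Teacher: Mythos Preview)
The paper does not prove Theorem~\ref{bosh} at all: it is stated as a citation of Bosch~\cite{bo1} and Martin~\cite{mart}, with a pointer to \cite[Proposition~4.5]{une} for a proof adapted to the present setting. So there is no ``paper's own proof'' to compare against.

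Your sketch is a reasonable outline of the standard argument and correctly identifies the key steps: the description of $\pi^{-1}(P)$ via the kernel condition, the embedding $A_{\mathfrak{m}} \hookrightarrow \mathcal{O}^{\circ}(\pi^{-1}(P))$ using that units outside $\mathfrak{m}$ have norm $1$, the extension to the completion via affinoid exhaustion, and the acknowledgment that surjectivity is the hard part carried out in the cited references. You also correctly note where normality enters. This is essentially the shape of the proof in the literature you cite, and since you explicitly defer the technical core to \cite{bo1} and \cite{mart}, there is no gap to flag beyond the fact that your write-up is a sketch rather than a complete proof.
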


As $P$ is a closed point and $\pi$ is anti-continuous, $\pi^{-1}(P)$ is an open subset of $C$. For a proof of Theorem \ref{bosh} in this setting, see \cite[Prop. 4.5]{une}.

\begin{rem} \label{0.2}
From \cite[Prop. 2.4.4]{Ber90} and Remark~\ref{berred}, if $x \in \mathscr{C}_s$ is the generic point of an irreducible component of $\mathscr{C}_s$, then $\pi^{-1}(x)$ is a single type 2 point of $C$. The valuation on $k(\mathscr{C}_k)$ determined by $x$ is the same as that determined by $\pi^{-1}(x)$; \emph{c.f.} \cite[Lemma 4.3]{une}. 
\end{rem}

The following objects play a very important role in this manuscript.

\begin{defn} \label{modO} \begin{sloppypar}
Let $X/k(\mathscr{C}_k)$ be a variety.
 Let $U$ be an open of $C$ such that $k(\mathscr{C}_k) \subseteq \mathrm{Frac}(\mathcal{O}^{\circ}(U))$ (\emph{e.g.} see Lemma~\ref{1.5.1}). \emph{A model} of $X$ over $\mathcal{O}^{\circ}(U)$ is an $\mathcal{O}^{\circ}(U)$-scheme~$\mathcal{X}$ such that ${\mathcal{X} \times_{\mathcal{O}^{\circ}(U)} k(\mathscr{C}_k)=X}$. 
\end{sloppypar}
\end{defn}

\subsubsection{Vertex sets and models of curves}

Let $C/k$ be a proper normal irreducible Berkovich analytic curve. We denote by $C^{\mathrm{al}}$ the unique algebraic curve such that its Berkovich analytification is $C$ (\emph{cf.} \cite[Th\'eor\`eme 3.7.2]{duc}). 

\begin{defn} \label{virtuals}  A non-empty finite set of type 2 points of $C$ is said to be a \emph{vertex set} of~$C$ (\emph{cf,} \cite[6.3.17]{duc}). 
\end{defn}

\begin{rem} \label{psejo} Let $\mathscr{C}$ be a proper flat normal model of $C^{\mathrm{al}}$ over $k^{\circ}.$ Let $\mathrm{Gen}(\mathscr{C}_s)$ denote the set of generic points of the irreducible components of the special fiber~$\mathscr{C}_s$ of $\mathscr{C}$. By Remark~\ref{0.2}, $S_{\mathscr{C}}:=\pi^{-1}(\mathrm{Gen}(\mathscr{C}_s))$ is a vertex set of $C$, where $\pi$ is the specialization morphism ${C \rightarrow \mathscr{C}_s}$. 
\end{rem}
\begin{thm}[{\cite[Theorem 4.3]{lordan}, \cite[6.3.14]{duc}}] \label{lordan}
The map $\mathscr{C} \mapsto S_{\mathscr{C}}$ induces a bijection between the following two partially ordered sets:
\begin{enumerate}
\item the isomorphism classes of flat normal proper models of $C^{\mathrm{al}}$ over $k^{\circ}$, ordered by morphisms of models,
\item the vertex sets of $C,$ ordered by inclusion.
\end{enumerate}
Furthermore, for a flat normal proper model $\mathscr{C}$ of $C^{\mathrm{al}}$ with special fiber $\mathscr{C}_s$, the corresponding specialization morphism $\pi$ induces a bijection:
\begin{center}
  $ \displaystyle
    \begin{aligned} 
        \{\mathrm{closed} \ \mathrm{points} \ \mathrm{on} \  \mathscr{C}_s\} & \cong  \{\mathrm{connected} \ \mathrm{components} \ \mathrm{of} \ C \backslash S_{\mathscr{C}}\} \\
P & \mapsto \pi^{-1}(P) \\
    \end{aligned}
  $ 
\end{center}
{If $P \in \mathscr{C}_s$ is a closed point, then the boundary of $\pi^{-1}(P)$ consists precisely of the preimages by $\pi$ of the generic points of the irreducible components of $\mathscr{C}_s$ containing~$P$.} 
\end{thm}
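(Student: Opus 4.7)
The plan is to establish the theorem in two stages that mirror its structure: first the poset bijection between models and vertex sets, then the bijection on closed points / connected components (including the boundary description).

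For the first stage, the map $\mathscr{C} \mapsto S_{\mathscr{C}}$ is already well defined by Remark \ref{psejo}, so I need surjectivity, injectivity, and order-preservation. Order-preservation follows at once from the functoriality of the specialization morphism with respect to a dominant morphism of models. For \emph{surjectivity}, given a vertex set $S = \{x_1, \dots, x_n\}$ of type 2 points, I would use the bijection (from the theory of Berkovich curves) between type 2 points of $C^{\mathrm{an}}$ and divisorial (discrete, rank 1, residually transcendence-degree 1) valuations on $F = k(C)$. One then needs to realize $v_1, \dots, v_n$ simultaneously as generic valuations of irreducible components of the special fiber of a flat normal proper model. I would proceed by starting from any flat normal proper model $\mathscr{C}_0$ of $C$ (which exists, e.g.\ by taking a projective model and normalizing), and for each $x_i \notin S_{\mathscr{C}_0}$ its specialization $\pi_0(x_i)$ is a closed point of $\mathscr{C}_{0,s}$ at which one performs an appropriate admissible blow-up followed by normalization so that $v_i$ becomes divisorial on the new model; iterating and using the quasi-compactness / finiteness of $S$, one obtains a normal proper model whose vertex set is exactly $S$. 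For \emph{injectivity}, two flat normal proper models with the same vertex set have the same (finite) collection of divisorial valuations on the common function field $F$, and a standard uniqueness result for normal proper models of a function field of a curve in terms of their generic valuations yields an isomorphism.

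For the second stage, anti-continuity of $\pi$ immediately gives that $\pi^{-1}(P)$ is open for each closed point $P \in \mathscr{C}_s$, and the fibers over generic points of $\mathscr{C}_s$ are exactly the points of $S_{\mathscr{C}}$ by construction, so the $\pi^{-1}(P)$ partition $C^{\mathrm{an}} \setminus S_{\mathscr{C}}$. The key point is \emph{connectivity} of each $\pi^{-1}(P)$: here I would invoke Bosch's Theorem \ref{bosh}, which identifies $\mathcal{O}^{\circ}(\pi^{-1}(P))$ with the completion $\widehat{\mathcal{O}_{\mathscr{C},P}}$; since $\mathscr{C}$ is normal and $P$ is a closed point on a curve, $\widehat{\mathcal{O}_{\mathscr{C},P}}$ is a local domain, and hence has no nontrivial idempotents, forcing $\pi^{-1}(P)$ to be connected. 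Combined with the partition statement, this gives the desired bijection between closed points and connected components of $C^{\mathrm{an}} \setminus S_{\mathscr{C}}$.

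The boundary description follows by combining anti-continuity with the local picture at $P$: the topological boundary of $\pi^{-1}(P)$ in $C^{\mathrm{an}}$ consists of points which lie in $S_{\mathscr{C}}$ and whose image under $\pi$ is in the Zariski-closure of $\{P\}$, i.e.\ generic points of the irreducible components of $\mathscr{C}_s$ through $P$. Conversely, each such generic point is accessible as a limit within $\pi^{-1}(P)$ by the same anti-continuity argument, giving equality.

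The main obstacle is the surjectivity in stage one: going from an abstract finite set of divisorial valuations to an actual global normal proper model genuinely requires the iterated blow-up / normalization construction above, plus a verification that the newly introduced type 2 point in the vertex set of the blown-up model is exactly $x_i$ (and not some other divisorial valuation picked up incidentally). Controlling this via the specialization morphism and the local affine picture of Section \ref{para} is the technical heart of the proof.
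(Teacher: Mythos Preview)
The paper does not give its own proof of this statement: Theorem~\ref{lordan} is quoted from \cite[Theorem~4.3]{lordan} and \cite[6.3.14]{duc} and used as a black box. There is therefore no proof in the paper to compare your proposal against.

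Your outline is a reasonable sketch of the standard argument, but two points deserve correction. First, in the connectivity step you write that $P$ is ``a closed point on a curve'' and hence $\widehat{\mathcal{O}_{\mathscr{C},P}}$ is a local domain; in fact $\mathscr{C}$ is a surface over $k^{\circ}$, so $\mathcal{O}_{\mathscr{C},P}$ is a $2$-dimensional normal local ring, and the assertion that its completion is a domain needs excellence (which does hold, since $\mathscr{C}$ is of finite type over a complete DVR). Second, in the boundary description you say the boundary consists of points of $S_{\mathscr{C}}$ ``whose image under $\pi$ is in the Zariski-closure of $\{P\}$''. Since $P$ is closed, its closure is $\{P\}$, and the boundary points map to generic points, not to $P$; the correct specialization relation is the reverse one, namely those $\eta \in \mathrm{Gen}(\mathscr{C}_s)$ with $P \in \overline{\{\eta\}}$.

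Finally, your injectivity step (``a standard uniqueness result'') is where most of the content lies and should not be left as a slogan: one has to produce a morphism between the two models extending the identity on $C$ (properness plus normality, via the valuative criterion and Zariski's main theorem) and then argue it is an isomorphism because both models are normal and the map is a bijection on generic points of the special fibers.
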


\begin{ex} \label{shembulli}
Given $\mathbb{P}_{\mathbb{Q}_p}^{1, \mathrm{an}}$ and the Gauss point $\eta_{0,1}$ (a semi-norm on $\mathbb{Q}_p[T]$ defined via $\sum_{i=1}^n a_n T^n \mapsto \max_i |a_n|_p$), its corresponding model over $\mathbb{Z}_p$ is $\mathbb{P}_{\mathbb{Z}_p}^1$. To see this, we may restrict to the \emph{affine} case of $\mathbb{A}_{\mathbb{Z}_p}^1$ and proceed as in Section \ref{para}(1). The specialization morphism is a map $\pi: \mathcal{M}(\mathbb{Q}_p\{T\}) \rightarrow \mathbb{A}_{\mathbb{F}_p}^1$, where $\mathbb{Q}_p\{T\}$ is the Tate algebra (\emph{cf.} \cite[2.1]{Ber90}). From Remark~\ref{berred} and \cite[Prop.~2.4.4]{Ber90}, $\pi^{-1}(\eta)$, where $\eta$ is the generic point of $\mathbb{A}_{\mathbb{F}_p}^1$, is the Shilov boundary of $\mathcal{M}(\mathbb{Q}_p\{T\})$, which is precisely $\eta_{0,1}.$ 
\end{ex}

Thanks to Hironaka's resolution of singularities:

\begin{cor} \label{lordan1}
Let $T$ be a finite set of type 2 points of $C$. There exists a proper regular model $\mathscr{C}$ of $C^{\mathrm{al}}$ over $k^{\circ}$ such that $T \subseteq S_{\mathscr{C}}$. The same remains true when replacing ``regular'' with ``\emph{sncd}''.
\end{cor}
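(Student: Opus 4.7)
The plan is to combine Theorem \ref{lordan} with resolution of singularities for arithmetic surfaces (Lipman/Hironaka). First, observe that we may assume $T$ is non-empty: any type $2$ point of $C^{\mathrm{an}}$ can be added to $T$ without harm, and $C^{\mathrm{an}}$ possesses such points (for instance by \cite[Th\'eor\`eme 5.1.14]{duc} applied to $C$). Under this assumption, $T$ itself is a vertex set of $C^{\mathrm{an}}$ in the sense of Definition \ref{virtuals}(1), so by Theorem \ref{lordan} there exists a flat normal proper model $\mathscr{C}_0$ of $C$ over $k^{\circ}$ with $S_{\mathscr{C}_0}=T$.

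The next step is to pass from $\mathscr{C}_0$ to a regular model above it. Since $\mathscr{C}_0$ is a two-dimensional excellent integral normal scheme, Lipman's theorem on the resolution of singularities of excellent surfaces furnishes a proper birational morphism $f:\mathscr{C}\to\mathscr{C}_0$ with $\mathscr{C}$ regular. I would then verify that $\mathscr{C}$ is again a flat normal proper model of $C$: it is integral and birational to $\mathscr{C}_0$ (hence its generic fiber is again $C$); it is proper over $k^{\circ}$ since $f$ is proper and $\mathscr{C}_0$ is; it is flat over $k^{\circ}$ because it is integral and dominates the discrete valuation ring $k^{\circ}$; and regularity implies normality. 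Thus $f$ is a morphism of flat normal proper models of $C$, and Theorem \ref{lordan} (whose bijection is order-preserving, with morphisms of models corresponding to inclusion of vertex sets) yields $T=S_{\mathscr{C}_0}\subseteq S_{\mathscr{C}}$, which proves the first assertion.

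For the refinement to an sncd model, I would start from the regular model $\mathscr{C}$ just obtained and iteratively blow up closed points of the special fiber $\mathscr{C}_s$ at which it fails to be a divisor with simple normal crossings. By standard embedded resolution for regular arithmetic surfaces, finitely many such blow-ups produce a regular proper flat model $\mathscr{C}'$ whose special fiber is an sncd divisor. Each blow-up is a morphism of models, so the induced morphism $\mathscr{C}'\to\mathscr{C}$ is a morphism of flat normal proper models; applying Theorem \ref{lordan} once more gives $T\subseteq S_{\mathscr{C}}\subseteq S_{\mathscr{C}'}$, as desired.

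I do not expect a serious obstacle in this argument: the content is really the combination of two external inputs, namely Theorem \ref{lordan} (already available) and resolution of singularities for excellent two-dimensional schemes. The only point requiring genuine care is the verification that every intermediate scheme produced by the resolution remains a \emph{flat normal proper} model, so that Theorem \ref{lordan} can be applied at each stage; this is however automatic from the birational nature of the blow-ups and the properness of the base $\mathrm{Spec}\,k^{\circ}$.
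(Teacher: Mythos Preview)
Your argument is correct and follows the same approach as the paper, which simply states the corollary as a consequence of Theorem~\ref{lordan} together with resolution of singularities (the paper cites Hironaka, while you cite Lipman, which is in fact the more precise reference for excellent two-dimensional schemes in arbitrary residue characteristic). Your write-up supplies the details the paper omits, in particular the verification that the resolved scheme remains a flat normal proper model so that the order-preserving bijection of Theorem~\ref{lordan} applies.
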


\begin{rem} \label{sncd} We recall that a model $\mathcal{C}$ of a curve over $k^{\circ}$ is said to be \emph{sncd} if its special fiber is a \emph{strict normal crossings divisor} (\emph{cf.} \cite[Tag 0BI9]{stacks}).  In that case, $\mathcal{C}$ is regular and its singular points are ordinary double points.
\end{rem}

\subsection{Notations and conventions}  \label{1.1}

Unless stated otherwise, we use the following notation throughout the manuscript.

\begin{enumerate}
\item Let $k$ be a complete discretely valued field, $k^{\circ}$ its valuation ring, and $\widetilde{k}$ its residue field. We fix a uniformizer $t$ of $k$.

\item Let $C/k$ be a proper normal irreducible $k$-analytic curve. Set $F=\mathscr{M}(C)$, where~$\mathscr{M}$ denotes the sheaf of \emph{meromorphic functions} on $C$ (\emph{cf.} \cite[1.7]{doktoratura}). The field $\mathscr{M}_x$ of germs of meromorphic functions on $x$ is endowed with a norm. Let~$\widehat{\mathscr{M}_x}$ be its completion.

\item Let $C^{\mathrm{al}}$ be the unique projective $k$-algebraic curve whose Berkovich analytification is $C$. It is a normal and irreducible curve. Moreover, $k(C^{\mathrm{al}})=F$ (\cite[Prop. 3.6.2]{Ber90}). Given a projective algebraic curve $Y/k$, we have $Y=(Y^{\mathrm{an}})^{\mathrm{al}}$.

\item Let $V(F)$ be the set of rank 1 non-trivial valuations $v$ on $F$ such that $v_{|k}$ either induces the norm on $k$ or is trivial. For $v \in V(F)$, let~$F_v$ be the completion of $F$ with respect to $v$. 

\item Let $X/F$ be a smooth proper variety such that $X(F_v) \neq \emptyset$ for all \emph{discrete} $v \in V(F)$. 
\end{enumerate}

We recall some known consequences of these hypotheses.

\begin{rem} \label{1.2} 
From \cite[Prop.~3.15]{une}, there exists a bijection ${\emph{val}: C \leftrightarrow V(F)}$, ${x \mapsto v_x}$, such that $\widehat{\mathscr{M}_x}=F_{v_x}$. Moreover, $\emph{val}$ induces a bijection between the \emph{rigid} points of $C$ (\emph{i.e.} the Zariski closed points of $C^{\mathrm{al}}$) and $v \in V(F)$ such that $v_{|k}$ is trivial. In this case, $v$ is the discrete valuation determined by the corresponding Zariski closed point. If $x \in C$ is of type 2, then $\emph{val}(x)$ is discrete and it extends the norm on $k$.
 Consequently, ${X(\widehat{\mathscr{M}_x}) \neq \emptyset}$ for all ${x \in C}$ either a rigid or type 2 point. This is equivalent to $X(\mathscr{M}_x) \neq \emptyset$ for all such~$x$ (\emph{cf.} \cite[Cor.~3.17]{une}). In fact, \begin{equation} \label{artin}
X(\mathscr{M}_x) \neq \emptyset \ \text{for all} \ x \in C
\iff X(F_v) \neq \emptyset \ \text{for all} \ v \in V(F).
\end{equation} 
\end{rem}

\begin{rem} \label{radii}
In this manuscript, discs and annuli can have radii in $\mathbb{R}_{>0}$.
\end{rem}

\section{A smoothness criterion over analytic curves} \label{section1}

We present here some smoothness criteria over $C$ that are sufficient for the implications~\eqref{rel1} and \eqref{rel2} to hold. We start with two auxiliary results.

\begin{lm} \label{1.3bis} 
Let $V$ be a strict affinoid domain of $C.$ If $X(\mathscr{M}(V)) \neq \emptyset$, then ${X(\mathscr{M}_x) \neq \emptyset}$ for all $x \in V$.
\end{lm}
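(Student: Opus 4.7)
The plan is to argue by pure functoriality of the point functor $X(-)$. For every $x \in V$, I will exhibit a natural ring homomorphism $\mathscr{M}(V) \to \mathscr{M}_x$, and then pull back a given $\mathscr{M}(V)$-point of $X$ along it to obtain an $\mathscr{M}_x$-point.

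The only actual construction required is that of the morphism $\mathscr{M}(V) \to \mathscr{M}_x$. Since $\mathscr{M}$ is a sheaf on $C$ and $V$ is a subset containing $x$, for every open neighborhood $U$ of $x$ in $C$ the restriction map gives $\mathscr{M}(V) \to \mathscr{M}(V \cap U)$, and as $U$ shrinks to $x$ these fit together into a morphism $\mathscr{M}(V) \to \mathscr{M}_x$ landing in the stalk. Equivalently, the strict affinoid subdomains of $C$ containing $x$ form a cofinal system computing the local ring $\mathcal{O}_{C,x}$, so there is a direct map $\mathcal{O}(V) \to \mathcal{O}_{C,x}$; the normality and irreducibility of $C$ ensure that $\mathscr{M}_x$ is the fraction field of $\mathcal{O}_{C,x}$, so this extends to the desired $\mathscr{M}(V) \to \mathscr{M}_x$.

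Once the morphism is in place, the lemma follows: applying $X(-)$ to it yields a map of sets $X(\mathscr{M}(V)) \to X(\mathscr{M}_x)$, and the image of any chosen element in the source is the required point in the target.

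No substantial obstacle appears. Neither the smoothness nor the properness of $X$ from Hypothesis~\ref{1.1.1} is needed here; the statement is purely formal and essentially records the ``easy'' direction complementary to Remark~\ref{1.4.1} (which handles the reverse passage from a germ to a strict affinoid neighborhood). It will serve as a convenient bookkeeping tool for the more substantive arguments later in Section~\ref{section1}.
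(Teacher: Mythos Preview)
Your argument works for interior points of $V$, but it breaks down on the boundary. The stalk $\mathscr{M}_x$ in the lemma is the stalk of the sheaf $\mathscr{M}$ on the ambient curve $C$, i.e.\ $\mathscr{M}_{C,x}=\varinjlim_{U}\mathscr{M}(U)$ with $U$ ranging over \emph{open neighborhoods of $x$ in $C$}. When $x\in\partial V$, the affinoid $V$ is not a neighborhood of $x$, so neither of your constructions lands in $\mathscr{M}_{C,x}$. Your first construction produces a map
\[
\mathscr{M}(V)\;\longrightarrow\;\varinjlim_{U}\mathscr{M}(V\cap U)\;=\;\mathscr{M}_{V,x},
\]
the stalk of $\mathscr{M}$ on the analytic space $V$, which for a boundary point is strictly larger than $\mathscr{M}_{C,x}$ (think of the Gauss point on the closed unit disc in $\mathbb{P}^{1,\mathrm{an}}_k$: a power series in $k\{T\}$ whose coefficients tend to $0$ subexponentially does not extend to any open neighborhood of that point in $\mathbb{P}^{1,\mathrm{an}}_k$). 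Your second construction has the same defect: it is the affinoid \emph{neighborhoods} of $x$, not all affinoid subdomains containing $x$, that compute $\mathcal{O}_{C,x}$; for $x\in\partial V$ the domain $V$ is not in that cofinal system.

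This is exactly why the paper does not treat the lemma as purely formal. Its proof splits $V=\mathrm{Int}\,V\cup\partial V$: on the interior your functoriality argument applies, while on $\partial V$ one uses that a strict affinoid has only type~2 boundary points, so Remark~\ref{1.2}(4) and Hypothesis~\ref{1.1.1} give $X(\mathscr{M}_x)\neq\emptyset$ directly. In particular the standing hypothesis that $X(F_v)\neq\emptyset$ for all discrete $v$ is genuinely used at the boundary, contrary to your closing remark.
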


\begin{proof}
It is clearly true for $x \in \mathrm{Int} \ V.$ As $V=\partial{V} \cup \mathrm{Int} \ V$ (see \cite[Cor. 1.8.11]{doktoratura}), it suffices to prove it for points $x \in \partial{V}$, which is a consequence of Rem. \ref{1.2}.
\end{proof}

We will now use the sheaf $\mathcal{O}^{\circ}$ (\emph{cf.} relation \eqref{ozero}). 
\begin{lm} \label{1.5.1}
Let $A$ be a connected open subset of $C$ such that $\partial{A}$ consists of type~2 and~3 points. Then $F \subseteq \mathrm{Frac} \ \mathcal{O}^{\circ}(A).$ 
\end{lm}

\begin{proof}
By assumption, $\partial{\overline{A}}$ contains only type 2 and~3 points, so $\overline{A} \neq C$. Let $y \in C \backslash \overline{A}$. For any $z \in \overline{A}$, let $U_z$ be an affinoid neighborhood of $z$ such that $y \not\in U_z$ (recall $C$ is separated). By compacity, there exists a finite number of points $z \in \overline{A}$ such that $\overline{A} \subseteq \bigcup_{z} U_z$; moreover $A \subseteq V:=\bigcup_{z} U_z \neq C$. From \cite[Thm. 1.8.15(2)]{doktoratura}, $V$ is an affinoid domain of $C$. For any $a \in F \subseteq \mathscr{M}(V)$, there exist $b, c \in \mathcal{O}(V)$ such that $a=\frac{b}{c}$. The functions $b,c$ are bounded on $V$. Hence, for a large enough $n$, $t^nb$ and $t^nc$ are bounded by~$1$ in $V$. Consequently, they are bounded by $1$ in $A$, implying $a \in \mathrm{Frac} \ \mathcal{O}^{\circ}(A).$
\end{proof}

To see why the statement is not always true, take $C=\mathbb{P}_k^{1, \mathrm{an}}$ and $A=\mathbb{A}_k^{1, \mathrm{an}}$. The meromorphic function $T$ (after a choice of coordinates) is not contained in $\mathrm{Frac}  \ \mathcal{O}^{\circ}(\mathbb{A}_k^{1, \mathrm{an}})$. 

\sloppypar
\begin{thm} \label{1.5}
Let $x\in C$. Suppose there exists a connected open neighborhood $T_x$ of~$x$ in $C$ such that $\partial{T_x}$ contains only type 2 and 3 points, and $X$ has a proper smooth model ${\mathcal{X} \rightarrow \mathrm{Spec} \ \mathcal{O}^{\circ}(T_x)}.$ Then there exists a neighborhood $U_x \subseteq T_x$ of $x$ such that ${X(\mathscr{M}(U_x)) \neq \emptyset}$. In particular, $X(\mathscr{M}_x) \neq \emptyset.$
\end{thm}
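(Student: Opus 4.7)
The overall idea is to use Hypothesis~\ref{1.1.1} at a nearby type~2 point to obtain a section of $\mathcal{X}$ over a complete DVR, and then to use the smoothness of $\mathcal{X}$ together with the structure of the rings $\mathcal{O}^{\circ}(V)$ (via Theorem~\ref{bosh}) to descend this section to a section over $\mathcal{O}^{\circ}(U_x)$ for a suitable open neighborhood $U_x$ of $x$.

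\emph{Stage 1 (existence at a type~2 point).} Since type~2 points are dense in every open of the Berkovich curve $C$, I pick a type~2 point $y \in T_x$ near $x$ (or $y := x$ if $x$ itself is of type~2). By Remark~\ref{1.2}(3), $\mathcal{H}(y) = F_{\emph{val}(y)}$ is complete discretely valued, so Hypothesis~\ref{1.1.1} yields $X(\mathcal{H}(y)) \neq \emptyset$. Setting $A := \mathcal{O}^{\circ}(T_x)$ and using the natural map $A \to \mathcal{H}(y)^{\circ}$ together with the properness of $\mathcal{X}/A$, the valuative criterion lifts the resulting $\mathcal{H}(y)$-point of $X \subseteq \mathcal{X}$ to a section $s_y : \mathrm{Spec}\,\mathcal{H}(y)^{\circ} \to \mathcal{X}$ over $A$.

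\emph{Stage 2 (descent to an open neighborhood).} By Corollary~\ref{lordan1}, I choose a proper regular model $\mathscr{C}$ of $C^{\mathrm{al}}$ with $y \in S_{\mathscr{C}}$, and then a closed point $P \in \mathscr{C}_s$ with $y \in \partial(\pi^{-1}(P))$ (equivalently, $P$ lies on the irreducible component of $\mathscr{C}_s$ whose generic point is $\pi(y)$). By Theorem~\ref{bosh}, $\mathcal{O}^{\circ}(\pi^{-1}(P)) = \widehat{\mathcal{O}_{\mathscr{C},P}}$ is a complete, hence Henselian, regular local ring. The section $s_y$, translated via the identification of $\mathcal{H}(y)^{\circ}$ with the completion of the DVR obtained by localizing $\widehat{\mathcal{O}_{\mathscr{C},P}}$ at the height one prime $\mathfrak{p}_y$ corresponding to $y$, together with the smoothness of $\mathcal{X}/A$, admits a Hensel/approximation lift to a section of $\mathcal{X}$ over $\widehat{\mathcal{O}_{\mathscr{C},P}} = \mathcal{O}^{\circ}(\pi^{-1}(P))$. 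Refining $\mathscr{C}$ if necessary so that $x \in \pi^{-1}(P) \subseteq T_x$ (using the correspondence between models and vertex sets from Theorem~\ref{lordan}, and the fact that $\partial T_x$ contains only type~2 and 3 points) and setting $U_x := \pi^{-1}(P)$, this section yields $X(\mathscr{M}(U_x)) \neq \emptyset$ via the inclusion $\mathcal{O}^{\circ}(U_x) \hookrightarrow \mathscr{M}(U_x)$; the ``in particular'' then follows by restriction to $\mathscr{M}_x$.

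\emph{Main obstacle.} The crux is the descent in Stage~2: lifting a section defined over a DVR-completion (at the height one prime $\mathfrak{p}_y$) to a section over the full two-dimensional complete regular local ring $\widehat{\mathcal{O}_{\mathscr{C},P}}$, whose maximal ideal corresponds to $P$. This must leverage the smoothness of $\mathcal{X}$ together with the Henselian nature of complete local rings; in the two-dimensional setting, a suitable form of Hensel's lemma for Henselian pairs (or an Artin-type approximation) is needed, and one must verify its applicability in the precise form required. A secondary difficulty is topological: arranging the model $\mathscr{C}$ (and the choice of $P$) finely enough that $x \in \pi^{-1}(P) \subseteq T_x$ while $y \in \partial \pi^{-1}(P)$, which requires exploiting the tree/graph structure of $C$ near $x$ and the hypothesis that $\partial T_x$ contain only type~2 and 3 points.
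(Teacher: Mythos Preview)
Your overall strategy---shrink to an open $U_x$ with $\mathcal{O}^{\circ}(U_x)$ a two-dimensional complete regular local ring $R$ via Theorem~\ref{bosh}, produce a point over a related complete DVR from Hypothesis~\ref{1.1.1}, then descend to $R$ using smoothness---is exactly the paper's. But the descent step you label ``Hensel/approximation lift'' is where the real content lies, and as stated it does not work.

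First, a minor point: the identification $\mathcal{H}(y)^{\circ} \cong \widehat{R_{\mathfrak{p}_y}}$ is false. The residue field of the right-hand side is the fraction field of the \emph{completed} local ring of the component $I_y$ at $P$, strictly larger than $\widetilde{\mathcal{H}(y)} = k(I_y)$. There is only an embedding $\mathcal{H}(y)^{\circ} \hookrightarrow \widehat{R_{\mathfrak{p}_y}}$, which is harmless for your purposes.

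The genuine gap is the lift itself. Hensel's lemma for a Henselian pair $(R,I)$ lifts sections of a smooth scheme from $R/I$ to $R$; it says nothing about lifting from a localization $\widehat{R_{\mathfrak p}}$. A section over $\widehat{R_{\mathfrak{p}_y}}$ does not directly give one over $R/\mathfrak{p}_y$. The paper's Lemma~\ref{1.5.2} bridges this in three steps: reduce the $\widehat{R_{(\alpha)}}$-section modulo the uniformizer to get a point over the residue field $\mathrm{Frac}(R/(\alpha))$; apply the valuative criterion of \emph{properness} to the one-dimensional DVR $R/(\alpha)$ to obtain a section over $R/(\alpha)$; then apply Hensel for the pair $(R,(\alpha))$, using \emph{smoothness} of $\mathcal{X}/R$, to lift to $R$. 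Properness and smoothness are both needed, at different stages, and this chain is precisely the ``main obstacle'' you flagged.

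The paper also streamlines the topology: it never fixes an auxiliary type~2 point $y$. Instead it takes a strict affinoid $V_x \subseteq T_x$, a model whose vertex set contains $\partial V_x$, and sets $U_x := \pi^{-1}(\pi(x))$; then $U_x \subseteq V_x$ by a connectedness argument (Lemma~\ref{ekstra}). Hypothesis~\ref{1.1.1} is then invoked for the discrete valuation on $F$ induced by an arbitrary regular parameter $\alpha$ of $R$ (see Remark~\ref{1.6bis}), not for a valuation attached to a point of $C$. This sidesteps your arrangement of $y$, $P$, and $x$ entirely.
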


\begin{proof} By Lemma \ref{1.5.1}, $F \subseteq \mathrm{Frac} \ \mathcal{O}^{\circ}(T_x)$. Let $V_x \subseteq T_x$ a strict affinoid neighborhood of~$x$, so that $\partial{V_x}$ is a finite set of type ~2 points. We remark that $X$ has a proper smooth model over~$\mathcal{O}^{\circ}(V_x).$ 
Let~$\mathscr{C}$ be a proper regular model of $C^{\mathrm{al}}$ over $k^{\circ}$ corresponding to a vertex set~$S$ of $C$ containing~$\partial{V_x}$ (see Corollary~\ref{lordan1}). We denote by $\mathscr{C}_s$ its special fiber and by~$\pi$ the  specialization morphism $C \rightarrow \mathscr{C}_s.$ If $x \in S$, then $X(\mathscr{M}_x) \neq \emptyset$ by assumption. If $x \not \in S$, set $P_x:=\pi(x)$. This is a closed point of ~$\mathscr{C}_s$ (see Rem. \ref{psejo}). By Theorem~\ref{lordan}, $U_x:=\pi^{-1}(P_x)$ is a connected component of $C \backslash S$, implying $U_x \cap S =\emptyset$. 

We continue the proof with two auxiliary lemmas which we will also use later on.

\begin{lm} \label{ekstra}
The following is satisfied: $U_x \subseteq V_x.$
\end{lm}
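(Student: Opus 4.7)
The plan is to exploit the fact that $U_x$ is a connected component of $C \setminus S$ together with the containment $\partial V_x \subseteq S$, so that $U_x$ avoids $\partial V_x$ entirely; then a connectedness argument will force $U_x$ into the interior of $V_x$.

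Concretely, I would proceed in three short steps. First, I would observe that $x \in \mathrm{Int}\,V_x$: by construction $\partial V_x \subseteq S$ and by assumption $x \notin S$, so $x \notin \partial V_x$; since $V_x$ is a neighborhood of $x$ we have $x \in V_x$, hence $x$ lies in the analytic (equivalently, topological, as $V_x$ is a strict affinoid domain in the proper curve $C$) interior of $V_x$. Second, since $U_x$ is a connected component of the open set $C \setminus S$, it satisfies $U_x \cap S = \emptyset$; in particular $U_x \cap \partial V_x = \emptyset$. Third, I would use the decomposition
\[
C = \mathrm{Int}\, V_x \;\sqcup\; \partial V_x \;\sqcup\; (C \setminus V_x),
\]
in which both $\mathrm{Int}\, V_x$ and $C \setminus V_x$ are open (the latter because the strict affinoid $V_x$ is closed in $C$). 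From step two, $U_x$ is contained in the disjoint union of the open sets $\mathrm{Int}\, V_x$ and $C \setminus V_x$; by connectedness, $U_x$ lies in one of them; by step one, $x \in U_x \cap \mathrm{Int}\, V_x$, so $U_x \subseteq \mathrm{Int}\, V_x \subseteq V_x$.

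I do not foresee a genuine obstacle here: the argument is purely point-set topology once one sets up the correct decomposition of $C$. The only mild subtlety is the identification of the Berkovich boundary $\partial V_x$ with the topological boundary of $V_x$ in $C$, but this is standard for a strict affinoid domain inside a proper $k$-analytic curve (the boundary points in both senses are the finitely many type $2$ points by which $V_x$ is ``glued'' to its complement), and it is implicitly used already in the choice of $S \supseteq \partial V_x$ and the application of Theorem \ref{lordan}.
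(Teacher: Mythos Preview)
Your proof is correct and follows essentially the same approach as the paper: both arguments hinge on the fact that $U_x$ is connected, meets $V_x$ (at $x$), and is disjoint from $\partial V_x\subseteq S$. The paper phrases the contradiction via an injective path $[x,y]\subseteq U_x$ that would have to cross $\partial V_x$, whereas you use the open decomposition $C=\mathrm{Int}\,V_x\sqcup\partial V_x\sqcup(C\setminus V_x)$; your version is marginally more elementary in that it avoids invoking the arc-connectedness/graph structure of Berkovich curves, but the two are otherwise the same argument.
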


\begin{proof}
Assume that there exists $y \in U_x \backslash V_x$. As $U_x$ is connected, there exists an injective path $[x,y]$ connecting $x$ and $y$ which is entirely contained in $U_x$. But as $x \in V_x$ and $y \not \in V_x$, $[x,y] \cap \partial{V_x} \neq \emptyset.$ This implies that $U_x \cap S \neq \emptyset,$ contradiction. 
\end{proof}

By Lemma \ref{ekstra}, $X$ has a proper smooth model over $\mathcal{O}^{\circ}(U_x),$ which we will continue to denote by~$\mathcal{X}.$ By Theorem~\ref{bosh}, $\widehat{\mathcal{O}_{\mathscr{C}, P_x}}=\mathcal{O}^{\circ}(\pi^{-1}(P_x))$, meaning that $\mathcal{O}^{\circ}(U_x)$ is a complete regular local ring of dimension 2.

\begin{lm} \label{1.5.2}
The following is satisfied:  $\mathcal{X}(\mathcal{O}^{\circ}(U_x)) \neq \emptyset$.
\end{lm}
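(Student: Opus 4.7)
The plan is to exploit the fact that $R := \mathcal{O}^{\circ}(U_x) = \widehat{\mathcal{O}_{\mathscr{C}, P_x}}$ is a $2$-dimensional complete regular local ring (combining Theorem \ref{bosh} with the regularity of $\mathscr{C}$), and to reduce, via two successive applications of the valuative criterion of properness, to finding a rational point of $\mathcal{X}_s$ over the residue field $\kappa$ of $R$; this $\kappa$-point will then be lifted to an $R$-point using the smoothness of $\mathcal{X}/R$ combined with the Henselian property of $R$.

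Without loss of generality (by refining the regular model chosen in the proof of Theorem \ref{1.5} via Corollary \ref{lordan1}), I would assume $\mathscr{C}$ is sncd. Pick an irreducible component $Y$ of $\mathscr{C}_s$ passing through $P_x$, with generic point $\eta$. Because $Y$ is regular at $P_x$, the height one prime $\mathfrak{p} \subset R$ cutting it out is generated by a regular parameter, hence $R/\mathfrak{p}$ is a discrete valuation ring, and $\widehat{R_{\mathfrak{p}}}$ is a complete DVR whose fraction field contains the completion $F_{v_\eta}$; here $v_\eta \in \Omega$ is the discrete valuation of $F$ attached to the type $2$ point $\pi^{-1}(\eta) \in S_{\mathscr{C}}$ via Remark \ref{1.2}.

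By Hypothesis \ref{1.1.1}, $X(F_{v_\eta}) \neq \emptyset$, and pulling back along $F_{v_\eta} \hookrightarrow \mathrm{Frac}(\widehat{R_{\mathfrak{p}}})$ gives an element of $(\mathcal{X} \times_R \widehat{R_{\mathfrak{p}}})(\mathrm{Frac}(\widehat{R_{\mathfrak{p}}}))$. The valuative criterion of properness, applied to the proper morphism $\mathcal{X} \times_R \widehat{R_{\mathfrak{p}}} \to \mathrm{Spec}(\widehat{R_{\mathfrak{p}}})$, extends this to a $\widehat{R_{\mathfrak{p}}}$-valued point, which in turn reduces modulo the maximal ideal of $\widehat{R_{\mathfrak{p}}}$ to a $\kappa(\mathfrak{p})$-valued point of $\mathcal{X}$, with $\kappa(\mathfrak{p}) = \mathrm{Frac}(R/\mathfrak{p})$. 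Viewing the latter as a generic-fiber point of the proper morphism $\mathcal{X} \times_R \mathrm{Spec}(R/\mathfrak{p}) \to \mathrm{Spec}(R/\mathfrak{p})$, a second application of the valuative criterion (legitimate because $R/\mathfrak{p}$ is a DVR) produces an $R/\mathfrak{p}$-valued point, whose reduction at the closed point is the sought-for $\kappa$-point of $\mathcal{X}_s$.

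The remainder of the argument is formal: completeness of $R$ implies it is Henselian, and smoothness of $\mathcal{X} \to \mathrm{Spec}(R)$ then forces the reduction $\mathcal{X}(R) \twoheadrightarrow \mathcal{X}_s(\kappa)$ to be surjective, so the $\kappa$-point constructed above lifts to an $R$-point. I expect the main technical subtlety to concern the sncd refinement and the bookkeeping of completions, namely verifying that $R/\mathfrak{p}$ is indeed a DVR and confirming that the inclusion $F_{v_\eta} \hookrightarrow \mathrm{Frac}(\widehat{R_{\mathfrak{p}}})$ is correctly set up so that Hypothesis \ref{1.1.1} actually supplies an initial generic-fiber point. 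Once these identifications are pinned down, the two valuative-criterion steps and the final Henselian lifting proceed without difficulty.
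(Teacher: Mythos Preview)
Your approach is essentially the same as the paper's: two applications of the valuative criterion of properness to descend from a generic point to the closed point, followed by a Henselian lift using smoothness. The paper also takes a regular system of parameters $(\alpha,\beta)$ of $R=\mathcal{O}^{\circ}(U_x)$, localizes at $(\alpha)$, invokes Hypothesis~\ref{1.1.1} on $\mathrm{Frac}\,\widehat{R_{(\alpha)}}$, reduces to $R/(\alpha)$, applies properness again, and lifts via the Henselian couple $(R,(\alpha))$.

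The one difference worth noting is how you justify that Hypothesis~\ref{1.1.1} applies. You pick the height-one prime coming from an irreducible component of $\mathscr{C}_s$ and identify the resulting discrete valuation with $v_\eta$ for a specific type~2 point; this is why you want $\mathscr{C}$ to be sncd (so that the component is regular at $P_x$ and $\mathfrak p$ is generated by a regular parameter). The paper instead takes \emph{any} regular parameter $\alpha$ and checks directly (Remark~\ref{1.6bis}) that the restriction of the $(\alpha)$-adic valuation to $k$ is either trivial or the norm, hence lies in $V(F)$. This avoids the sncd refinement entirely. Your refinement step is a minor wrinkle: passing to a finer model replaces $U_x$ by a smaller open $U_x'$, so you would literally be proving $\mathcal{X}(\mathcal{O}^{\circ}(U_x'))\neq\emptyset$ rather than the lemma as stated---still sufficient for Theorem~\ref{1.5}, but cleaner to sidestep as the paper does.
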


\begin{proof} Let $\alpha, \beta$ be generators of the maximal ideal of $\mathcal{O}^{\circ}(U_x)$. Then the localization $\mathcal{O}^{\circ}(U_x)_{(\alpha)}$ is a discrete valuation ring with uniformizer $\alpha$ (\cite[Tag 0AFS]{stacks}). As $F \subseteq  \text{Frac} \ \mathcal{O}^{\circ}(T_x) \subseteq  \text{Frac} \ \mathcal{O}^{\circ}(U_x)$, the completion $\text{Frac} \ \widehat{\mathcal{O}^{\circ}(U_x)_{(\alpha)}}$ of $\mathrm{Frac} \ \mathcal{O}^{\circ}(U_x)_{(\alpha)}$ is a complete discretely valued field containing $F$. By Remark~\ref{1.6bis} below, it either extends the norm on $k$ or is trivial there. By assumption, $X(\text{Frac} \ \widehat{\mathcal{O}^{\circ}(U_x)_{(\alpha)}}) \neq \emptyset$, and so $\mathcal{X}(\text{Frac} \ \widehat{\mathcal{O}^{\circ}(U_x)_{(\alpha)}}) \neq \emptyset.$

By the valuative criterion of properness, $\mathcal{X}(\widehat{\mathcal{O}^{\circ}(U_x)_{(\alpha)}}) \neq \emptyset.$ By taking the residue field,  $\mathcal{X}(\mathcal{O}^{\circ}(U_x)_{(\alpha)}/(\alpha))=\mathcal{X}(\text{Frac} \ \mathcal{O}^{\circ}(U_x)/(\alpha)) \neq \emptyset.$
Again, $\mathcal{O}^{\circ}(U_x)/(\alpha)$ is a discrete valuation ring (with uniformizer $\beta$, see \cite[Tag 00NQ]{stacks}), so by applying the valuative criterion of properness, $\mathcal{X}(\mathcal{O}^{\circ}(U_x)/(\alpha)) \neq \emptyset.$
As the local ring $(\mathcal{O}^{\circ}(U_x), (\alpha, \beta))$ is complete, it is also complete with respect to the topology induced by the ideal $(\alpha)$. In particular, $(\mathcal{O}^{\circ}(U_x), (\alpha))$ is a Henselian couple. Seeing as $\mathcal{X}$ is smooth over $\mathcal{O}^{\circ}(U_x)$, by applying the Hensel lifting property (see \cite[Thm.~I.8]{gruson}), we obtain that ${\mathcal{X}(\mathcal{O}^{\circ}(U_x)) \neq \emptyset}.$
\end{proof}

By Lemma \ref{1.5.2}, $X(\mathrm{Frac} \ \mathcal{O}^{\circ}(U_x)) \neq \emptyset,$ so $X(\mathscr{M}(U_x)) \neq \emptyset$ and ${X(\mathscr{M}_x) \neq \emptyset}$.
\end{proof}

\begin{rem} \label{1.6bis}
In Lemma \ref{1.5.2}, let $\alpha, \beta$ be generators of the maximal ideal of $\mathcal{O}^{\circ}(U_x)$. The localizations $\mathcal{O}^{\circ}(U_x)_{(\alpha)}, \mathcal{O}^{\circ}(U_x)_{(\beta)}$ are discretely valued rings. As ${k^{\circ} \subseteq \mathcal{O}^{\circ}(U_x)},$ depending on whether the uniformizer $t$ of $k^{\circ}$ is in  $(\alpha)$ (resp. $(\beta)$) or not, the restriction of the discrete valuation on $\mathcal{O}^{\circ}(U_x)_{(\alpha)}$ (resp. $\mathcal{O}^{\circ}(U_x)_{(\beta)}$) to $k^{\circ}$ either induces the norm on $k$ or is trivial.
\end{rem}

\begin{rem} \label{1.6.1}
 If the hypotheses of Theorem~\ref{1.5} are satisfied, then $C \backslash T_x$ contains the rigid points on which $X$ has bad reduction. To see this, assume $s \in T_x$ is a bad reduction point of $X$. The map $\mathcal{O}^{\circ}(T_x) \rightarrow \mathcal{O}_{C,s}$ induces $\mathcal{O}^{\circ}(T_x) \rightarrow \kappa(s),$ where $\kappa(s)$ is the residue field of~$s$. But then $X$ has a smooth model over~$\kappa(s)$, contradiction.   
\end{rem}

Before giving a similar global criterion for checking that $X(\mathscr{M}_x) \neq \emptyset$, we start with an auxiliary result. 

\begin{lm} \label{1.7.1}
Let $V_i, i=1,2,\dots, n$, be strict affinoid domains of $C$. Let $A$ be a finite set of type 2 points in $C$. Set $D:=C \backslash (A \cup \bigcup_{i=1}^n V_i).$ Then $D$ is an open subset of $C$ and $\partial{D}$ is a finite set of type 2 points contained in $A \cup \bigcup_{i=1}^n \partial{V_i}$.
\end{lm}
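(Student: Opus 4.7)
The plan is to prove the three claims (openness of $D$, finiteness of $\partial D$, and the containment with type 2 points) more or less in turn, by combining compactness/Hausdorffness of $C$ with the standard relationship between strict affinoid domains and their Berkovich boundaries.

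First I would dispatch openness of $D$ by showing its complement is closed. Since $C$ is a proper $k$-analytic curve it is separated, hence Hausdorff. Each strict affinoid domain $V_i$ is compact, and a compact subset of a Hausdorff space is closed, so $\bigcup_{i=1}^n V_i$ is closed. The finite set $A$ is closed as well (a finite set of points in a Hausdorff space), so $A\cup\bigcup_{i=1}^n V_i$ is closed and $D$ is open.

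Next I would establish the containment $\partial D\subseteq A\cup\bigcup_{i=1}^n\partial V_i$. Let $x\in\partial D=\overline{D}\setminus D$. Then $x\in A\cup\bigcup_i V_i$, since $x\notin D$. If $x\in A$ we are done, so suppose $x\in V_i$ for some $i$. If moreover $x\in V_i\setminus\partial V_i=\mathrm{Int}\,V_i$, then by definition of the Berkovich relative interior $V_i$ would be a neighborhood of $x$ in $C$, and this neighborhood would be disjoint from $D$, contradicting $x\in\overline{D}$. Hence $x\in\partial V_i$, proving the containment.

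Finally, the finiteness and type-2 claims follow at once: $A$ is finite and consists of type 2 points by hypothesis, and for each strict affinoid domain $V_i$ of a curve, $\partial V_i$ is known to be a finite set consisting solely of type 2 points (this is the defining property of strictness used throughout the paper, cf.\ Remark \ref{1.4.1}). Therefore $A\cup\bigcup_i\partial V_i$ is a finite set of type 2 points, and so is its subset $\partial D$.

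I do not anticipate a real obstacle: the only subtle point is to make sure one applies the Berkovich definition of $\partial V_i$ correctly, namely that a point of $V_i\setminus\partial V_i$ admits $V_i$ as an actual topological neighborhood in $C$; this is exactly what allows the neighborhood argument ruling out interior points of the $V_i$ from lying in $\overline{D}$.
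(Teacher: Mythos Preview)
Your proposal is correct and follows essentially the same approach as the paper's proof: both argue that the complement of $D$ is closed (you via compactness plus Hausdorffness, the paper by directly citing that affinoid domains are closed and points in a separated space are closed), and both rule out points of $\mathrm{Int}\,V_i$ from $\partial D$ by the same neighborhood argument, then conclude finiteness and the type~2 property from the strictness of the $V_i$.
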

\begin{proof}
As $C$ is separated, its points are closed, so $A$ is a closed subset, implying $A \cup \bigcup_{i=1}^n V_i$ is closed, and so $D$ is open.  As $D \cap \partial{D}=\emptyset$, we obtain $\partial{D} \subseteq \bigcup_{i=1}^n V_i \cup A$. Suppose there exists $i' \in \{1,2,\dots, n\}$ such that $\partial{D} \cap \mathrm{Int} \ V_{i'} \neq \emptyset$. Let $\eta \in \partial{D} \cap \mathrm{Int} \ V_{i'}.$ There exists a neighborhood $U_{\eta}$ of $\eta$ in ~$C$ such that $U_{\eta} \subseteq V_{i'}.$ But as $\eta \in \partial{D},$ $U_{\eta} \cap D \neq \emptyset$, so $V_{i'} \cap D\neq \emptyset$, contradiction.  From \cite[Cor. 1.8.11]{doktoratura},  $\partial{D} \subseteq \bigcup_{i=1}^n \partial{V_i} \cup A$, meaning $\partial{D}$ contains only type~2 points. As $ \bigcup_{i=1}^n \partial{V_i} \cup A$ is finite (\emph{cf.} \cite[Prop.~2.5]{une}), so is $\partial{D}$.
\end{proof}

The following is a consequence of Theorem~\ref{1.5}. See the notion of \emph{strongly transitive action} in Definition~\ref{st}. 

\begin{cor} \label{1.7} 
Let $Q \subseteq C$ be a finite set of rigid and type 2 points. For ${z \in Q}$, let $V_z$ be a strict affinoid neighborhood of $z$ such that $X(\mathscr{M}(V_z)) \neq \emptyset$. 
Set ${U:=C \backslash (\bigcup_{z \in Q} V_z \cup A)}$, with $A$ a finite set of type 2 points. If there exists a smooth proper model ${\mathcal{X} \rightarrow \mathrm{Spec} \ \mathcal{O}^{\circ}(U)}$ of~$X$,
then $X(\mathscr{M}_x) \neq \emptyset$ for all $x \in C$. Equivalently, $X(F_v) \neq \emptyset$ for all $v \in V(F)$.

Moreover, if there exists a rational linear algebraic group $G/F$ acting strongly transitively on $X$, then $X(F) \neq \emptyset$.
\end{cor}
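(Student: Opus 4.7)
The plan is to verify $X(\mathscr{M}_x)\neq\emptyset$ at every point $x\in C$ by splitting into two cases according to whether $x$ lies in some $V_z$ or in $U$. For $x$ belonging to some $V_z$ (with $z\in Q$), since $V_z$ is a strict affinoid domain of $C$ satisfying $X(\mathscr{M}(V_z))\neq\emptyset$ by hypothesis, Lemma~\ref{1.3bis} yields the conclusion immediately.

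The substantive case is $x\in U$. The idea is to reduce to Theorem~\ref{1.5}, applied with $T_x$ taken to be the connected component $U_x$ of $U$ containing $x$. Using that theorem requires two checks: that $\partial U_x$ contains only type~2 (or type~3) points, and that the given smooth proper model of $X$ over $\mathcal{O}^{\circ}(U)$ restricts to one over $\mathcal{O}^{\circ}(U_x)$. For the first, each $z\in Q$ is rigid or of type~2 and $V_z$ is a strict affinoid neighborhood of $z$, so each $\partial V_z$ is a finite set of type~2 points; Lemma~\ref{1.7.1} applied with $A=\emptyset$ then shows $\partial U\subseteq\bigcup_{z\in Q}\partial V_z$ is itself a finite set of type~2 points, whence $\partial U_x\subseteq\partial U$ is as well (since any boundary point of $U_x$ lies in the closure of $U$ but not in $U$, because $U_x$ is a clopen subset of the open set $U$). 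For the second, base-changing $\mathcal{X}$ along the restriction map $\mathcal{O}^{\circ}(U)\to\mathcal{O}^{\circ}(U_x)$ produces the desired smooth proper model. Theorem~\ref{1.5} then delivers $X(\mathscr{M}_x)\neq\emptyset$.

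Combining the two cases, $X(\mathscr{M}_x)\neq\emptyset$ for every $x\in C$, which by Remark~\ref{xval} is equivalent to $X(F_v)\neq\emptyset$ for all $v\in V(F)$. For the final assertion, the hypothesized rational linear algebraic group $G/F$ acting strongly transitively on $X$ allows an application of Theorem~\ref{theoremval} (with $\overline{\Omega}=V(F)$), upgrading this to $X(F)\neq\emptyset$. The main obstacle is not structural, since once Theorem~\ref{1.5} and Lemmas~\ref{1.3bis}, \ref{1.7.1} are in hand the assembly is cosmetic; rather, it is the verification that $\partial U$ genuinely avoids type~1 points, so that the phrase \emph{proper smooth model over $\mathcal{O}^{\circ}(U)$} is meaningful in the sense of Remark~\ref{1.5.1}. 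This is precisely what Lemma~\ref{1.7.1} secures, and it is the reason for insisting that the points of $Q$ be rigid or of type~2 and that the neighborhoods $V_z$ be strict affinoids.
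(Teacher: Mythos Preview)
Your proof is correct and follows essentially the same approach as the paper: split $C$ into $\bigcup_{z\in Q}V_z$ and $U$, handle the former via Lemma~\ref{1.3bis}, the latter via Theorem~\ref{1.5} (after using Lemma~\ref{1.7.1} to control $\partial U$), then invoke Remark~\ref{xval} and \cite[Corollary~3.18]{une} (which is the content of Theorem~\ref{theoremval}). Your passage to the connected component $U_x$ before applying Theorem~\ref{1.5} is a detail the paper leaves implicit but which is indeed needed, since that theorem requires $T_x$ to be connected.
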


\begin{proof} 
By Lemma \ref{1.7.1}, $U$ is an open subset of $C$ and $\partial{U}$ is a finite set of type 2 points.
For any $x \in U$, by taking the connected component of $U$ containing $x$ and applying Theorem~\ref{1.5}, $X(\mathscr{M}_x) \neq \emptyset$. 
On the other hand, for all $z\in Q$, $X(\mathscr{M}(V_z)) \neq \emptyset,$ so for any $x \in \bigcup_{z \in Q} V_z$, $X(\mathscr{M}_x) \neq \emptyset$ (see Lemma \ref{1.3bis}). Thus, $X(\mathscr{M}_x) \neq \emptyset$ for all $x \in C$, so we can conclude by relation \eqref{artin} and \cite[Cor.~3.18]{une}. 
\end{proof}

\begin{rem} \label{1.8} 
(1) One can show as in Remark \ref{1.6.1} that  $\bigcup_{z \in Q} V_z$ contains the bad-reduction points of $X$. Without loss of generality, we may assume $Q$ contains the bad-reduction points of $X$. 

(2) Since $U$ does not contain the bad reduction points of $X$, the latter has a proper smooth model over $\mathcal{O}(U)$. However, $\mathcal{O}(U)$ is in general substantially larger than its subring $\mathcal{O}^{\circ}(U)$ (\emph{e.g.} if $U$ is the open unit disc over $\mathbb{Q}_p$, then $\mathcal{O}^{\circ}(U)=\mathbb{Z}_p[[T]]$ and $\mathcal{O}(U) \supsetneq \mathbb{Z}_p[[T]][1/p]$--the \emph{bounded} functions on $U$; \emph{cf.} \cite{mart}). Hence, the assumption of smoothness over $\mathcal{O}^{\circ}(U)$ is stronger.
In the next section, we interpret it as a smoothness assumption over parts of the special fiber of a model of $C^{\mathrm{al}}$.
\end{rem}

As proven by the following remark and proposition, it suffices to show that $X$ has a smooth model over $\mathcal{O}^{\circ}(U)$ for a \emph{finite} number of open subsets $U \subseteq C$, which are considerably smaller than $V$ from Corollary \ref{1.7}.

\begin{rem} \label{1.11} With the same notation as in Corollary \ref{1.7}, let $S$ be any vertex set of~$C$ such that $\bigcup_{z \in Q} \partial{V_z} \subseteq S.$ Then for any $s \in S$, $X(\mathscr{M}_s) \neq \emptyset$, so there exists a neighborhood~$U_s$ of $s$ in $C$ such that $X(\mathscr{M}(U_s)) \neq \emptyset.$ Without loss of generality, we may assume that $U_s$ has finite boundary in $C$.
There exist only a finite number of connected components of $C \backslash S$ not entirely contained in $\bigcup_{s \in S} U_s$: if that were not the case, as $S$ is finite, there exists $s_0 \in S$ such that there are infinitely many connected components of~$C \backslash S$ not contained in $\bigcup_{s\in S} U_s$ and intersecting~$U_{s_0}$. Then, they all intersect the boundary of~$U_{s_0}$, implying the latter is infinite, contradiction. 

Let $A_1, A_2, \dots, A_n$ be the connected components of $C \backslash S$ not contained in $\bigcup_{s \in S} U_s$. If~$S$ is a \emph{triangulation} (\emph{cf.} \cite[5.1.13]{duc}), then $A_i$ is an \emph{open virtual disc} or \emph{open virtual annulus} (\emph{cf.} \cite[pg. 210]{duc}). 
We remark that $\partial{A_i} \subseteq S$. For any $i,$ set $B_i:= A_i \backslash \bigcup_{z \in Q} V_z$. This implies that $\partial{B_i} \subseteq \partial{A_i} \cup \bigcup_{z \in Q} \partial{V_z}$. Hence, $B_i$ is an open subset of $C$ and $\partial{B_i} \subseteq S \cup \bigcup_{s \in S} \partial{U_s}$ (i.e. a finite set of type 2 points). See Figure 1 for illustrations of such data. 

\

\begin{minipage}{0.4 \textwidth}
\definecolor{ccqqqq}{rgb}{0.8,0,0}
\definecolor{qqqqff}{rgb}{0,0,1}
\definecolor{qqwuqq}{rgb}{0,0.39215686274509803,0}
\begin{center}
\begin{tikzpicture}[line cap=round,line join=round,>=triangle 45,x=1cm,y=1cm, scale=0.45]
\draw [line width=0.4pt] (-1.5,2.86)-- (-1.44,-3.44);
\draw [line width=0.4pt] (-1.486858334844912,1.4801251587157627)-- (-0.82,2.9);
\draw [line width=0.4pt] (-1.486858334844912,1.4801251587157627)-- (0.32,1.94);
\draw [line width=0.4pt] (-0.5071415861119455,1.7294790098940034)-- (0,2.76);
\draw [line width=0.4pt] (-1.486858334844912,1.4801251587157627)-- (-2.96,2.74);
\draw [line width=0.4pt] (-1.486858334844912,1.4801251587157627)-- (-3.36,1.52);
\draw [line width=0.4pt] (-2.158841309048093,1.4944301175667223)-- (-2.76,1.14);
\draw [line width=0.4pt] (-1.4502884092145836,-2.35971703246871)-- (-0.1,-3.48);
\draw [line width=0.4pt] (-1.4502884092145836,-2.35971703246871)-- (1.3,-2.32);
\draw [line width=0.4pt] (-0.42021890086866476,-2.344841754672049)-- (0.56,-1.56);
\draw [line width=0.4pt] (-1.4502884092145836,-2.35971703246871)-- (-3.56,-2.72);
\draw [line width=0.4pt] (-2.3055592611295848,-2.5088814777408297)-- (-2.645559261129585,-3.3488814777408296);
\draw [line width=0.4pt] (-1.473490809573764,0.07653500524522583)-- (-0.008988215405471484,0.6967704893361524);
\draw [line width=0.4pt] (-0.8266136284352528,0.3504957228216079)-- (0.03237645206391054,-0.006428857643342467);
\draw [line width=0.4pt] (-1.4600996268663489,-1.3295391790333624)-- (-2.677009267180612,-0.5855342022146911);
\draw [line width=0.4pt] (-2.0563301829150014,-0.9650104693117407)-- (-3.132020609343814,-1.4541922190717143);
\draw [shift={(-2.6791380120464825,-2.8340117380516845)},line width=0.5pt,color=qqqqff]  plot[domain=-1.6539252309690085:1.9167833610331142,variable=\t]({0.977995007669109*2.7744996184153807*cos(\t r)+-0.20862829380095965*1.0723085125109393*sin(\t r)},{0.20862829380095965*2.7744996184153807*cos(\t r)+0.977995007669109*1.0723085125109393*sin(\t r)});
\draw [shift={(-1.9594062235702767,2.0447020385524928)},line width=0.5pt,color=qqqqff]  plot[domain=-1.4813531749605922:2.074876586527798,variable=\t]({0.06094845263380595*1.686367592091549*cos(\t r)+0.9981409149621834*1.5872996441746399*sin(\t r)},{-0.9981409149621834*1.686367592091549*cos(\t r)+0.06094845263380595*1.5872996441746399*sin(\t r)});
\draw [line width=0.6pt,color=ccqqqq] (-0.3719473659379591,2.004196141057353)-- (-0.0010922423470400844,2.757780543548919);
\draw [line width=0.6pt,color=ccqqqq] (-0.3928131990237959,1.7585774443714126)-- (0.3186326371238615,1.939651983902935);
\draw [line width=0.6pt,color=ccqqqq] (-1.4769019362960565,0.4347033110859335)-- (-1.4571687208104382,-1.6372843149039833);
\draw [line width=0.6pt,color=ccqqqq] (-1.473490809573764,0.07653500524522588)-- (-0.008988215405471456,0.6967704893361524);
\draw [line width=0.6pt,color=ccqqqq] (-0.8266136284352528,0.3504957228216079)-- (0.03237645206391049,-0.006428857643342445);
\draw [line width=0.6pt,color=ccqqqq] (-3.129376720614705,-1.4529898823371856)-- (-2.0563301829150005,-0.9650104693117406);
\draw [line width=0.6pt,color=ccqqqq] (-1.4600996268663489,-1.3295391790333622)-- (-2.677009267180612,-0.5855342022146912);
\draw [line width=0.6pt,color=ccqqqq] (-0.06350689385142438,-2.0592295539157712)-- (0.558137964900774,-1.561490894424952);
\draw [line width=0.6pt,color=ccqqqq] (0.04348498836232043,-2.3381453869883)-- (1.2960850670321329,-2.320056535714319);
\draw [line width=0.6pt,color=ccqqqq] (-0.5337460297634614,-3.120137406155351)-- (-0.1,-3.48);
\begin{scriptsize}
\draw [fill=orange] (-1.486858334844912,1.4801251587157627) circle (3pt);
\draw [fill=orange] (-1.4502884092145836,-2.3597170324687102) circle (3pt);
\end{scriptsize}
\end{tikzpicture}
\end{center}
\end{minipage}
\hfill
\begin{minipage}{0.5 \textwidth}
\definecolor{ccqqqq}{rgb}{0.8,0,0}
\definecolor{qqwuqq}{rgb}{0,0.39215686274509803,0}
\definecolor{qqqqff}{rgb}{0,0,1}
\begin{tikzpicture}[line cap=round,line join=round,>=triangle 45,x=1cm,y=1cm, scale=0.8]
\draw [line width=0.4pt] (5.105838402878936,2.0522645874389487)-- (3.5963636363636406,1.3945454545454523);
\draw [line width=0.4pt] (4.357998943965344,1.7264106338841678)-- (3.90545454545455,0.9945454545454522);
\draw [rotate around={179.7293422211855:(6.574071100844763,1.9763283485870986)},line width=0.4pt] (6.574071100844763,1.9763283485870986) ellipse (1.1516305483343348cm and 0.5134680959634843cm);
\draw [line width=0.4pt] (7.687272727272733,2.1036363636363613)-- (10.087272727272733,2.176363636363634);
\draw [line width=0.4pt] (8.56358997772579,2.132704079771983)-- (9.945408159543968,2.932704079771983);
\draw [line width=0.4pt] (9.305621351125945,2.1526772310258524)-- (10.032727272727278,1.6309090909090886);
\draw [line width=0.4pt] (6.432727272727278,1.4672727272727248)-- (5.105454545454551,0.9218181818181795);
\draw [line width=0.4pt] (5.814370320640996,1.2131534318948007)-- (5.9963636363636414,0.8309090909090886);
\draw [shift={(6.572640554724287,2.004363913094897)},line width=0.5pt,color=qqqqff]  plot[domain=-0.7784037553433398:2.2065428014777515,variable=\t]({-0.9995393206570573*0.8265306742072708*cos(\t r)+0.030350394732667648*0.3444443078272129*sin(\t r)},{-0.030350394732667648*0.8265306742072708*cos(\t r)+-0.9995393206570573*0.3444443078272129*sin(\t r)});
\draw [shift={(5.5105959925979,2.323228712177983)},line width=0.5pt,color=qqqqff]  plot[domain=2.2744046990183397:5.42529381340711,variable=\t]({-0.9792162471879514*0.6762199770621148*cos(\t r)+0.20281898639709456*0.24605572392588546*sin(\t r)},{-0.20281898639709456*0.6762199770621148*cos(\t r)+-0.9792162471879514*0.24605572392588546*sin(\t r)});
\draw [line width=0.4pt] (5.444454107279045,2.080497133485542)-- (3.475214404708215,1.9708305648400295);
\draw [line width=0.4pt] (4.456172061147784,2.02545990225783)-- (3.6058083359843316,2.5421790141730383);
\draw [shift={(6.742803101335329,1.2626671303998136)},line width=0.5pt,color=qqqqff]  plot[domain=1.3177236849151588:4.191601537582174,variable=\t]({-0.9011293146297075*1.0133546437508854*cos(\t r)+0.43355041035039227*0.3338648540008264*sin(\t r)},{-0.43355041035039227*1.0133546437508854*cos(\t r)+-0.9011293146297075*0.3338648540008264*sin(\t r)});
\draw [shift={(5.952465895532633,1.6992242578273609)},line width=0.5pt,color=qqqqff]  plot[domain=-0.9987540895340565:1.964121320213759,variable=\t]({-0.9861108860161472*1.442706937666415*cos(\t r)+0.16608829122021024*1.0207026277098625*sin(\t r)},{-0.16608829122021024*1.442706937666415*cos(\t r)+-0.9861108860161472*1.0207026277098625*sin(\t r)});
\draw [shift={(6.664530924094383,2.0228140792687173)},line width=0.5pt,color=qqqqff]  plot[domain=0.33323520742052054:1.9079786313151537,variable=\t]({0.9946148309681961*0.7671443980566389*cos(\t r)+0.10364042656273995*0.3277239386720831*sin(\t r)},{-0.10364042656273995*0.7671443980566389*cos(\t r)+0.9946148309681961*0.3277239386720831*sin(\t r)});
\draw [shift={(8.172872885774106,3.934689593297389)},line width=0.5pt,color=qqqqff]  plot[domain=4.320911181679921:5.156808190285789,variable=\t]({1*2.0342887158828526*cos(\t r)+0*2.0342887158828526*sin(\t r)},{0*2.0342887158828526*cos(\t r)+1*2.0342887158828526*sin(\t r)});
\draw [shift={(6.727966934763913,2.5332290194421336)},line width=0.5pt,color=qqqqff]  plot[domain=0.8493525249662075:3.989265977500896,variable=\t]({0.9824865243069564*0.46373149739342806*cos(\t r)+-0.18633365116166334*0.16170695190107526*sin(\t r)},{0.18633365116166334*0.46373149739342806*cos(\t r)+0.9824865243069564*0.16170695190107526*sin(\t r)});
\draw [shift={(8.981286699854026,2.2961245084978814)},line width=0.5pt,color=qqqqff]  plot[domain=-1.618077273203541:0.7605281654750143,variable=\t]({0.9206620069142507*0.4473574477440575*cos(\t r)+-0.39036069093163583*0.20866987630957382*sin(\t r)},{0.39036069093163583*0.4473574477440575*cos(\t r)+0.9206620069142507*0.20866987630957382*sin(\t r)});
\draw [shift={(8.758142927656703,2.7687335568270166)},line width=0.5pt,color=qqqqff]  plot[domain=4.498626829162853:5.456400534535516,variable=\t]({0.9871673987372206*0.7280866920770606*cos(\t r)+0.15968884391337249*0.18694604867960746*sin(\t r)},{-0.15968884391337249*0.7280866920770606*cos(\t r)+0.9871673987372206*0.18694604867960746*sin(\t r)});
\draw [shift={(6.956422633480364,2.4299563999409712)},line width=0.5pt,color=qqqqff]  plot[domain=1.0092587672493827:1.52344892533638,variable=\t]({0.9992857740283344*0.7727593745254331*cos(\t r)+0.03778811750527296*0.2818213720256205*sin(\t r)},{-0.03778811750527296*0.7727593745254331*cos(\t r)+0.9992857740283344*0.2818213720256205*sin(\t r)});
\draw [shift={(8.019699259035896,2.5855784098738783)},line width=0.5pt,color=qqqqff]  plot[domain=0.4699340251875065:2.1633332046531173,variable=\t]({0.9999731002419435*0.6242288494158861*cos(\t r)+0.0073347660164445275*0.0694918235431502*sin(\t r)},{-0.0073347660164445275*0.6242288494158861*cos(\t r)+0.9999731002419435*0.0694918235431502*sin(\t r)});
\draw [shift={(7.530317877795425,2.6667705369816592)},line width=0.5pt,color=qqqqff]  plot[domain=3.417217513176884:5.797140660852658,variable=\t]({0.9999794549395441*0.15948943509860558*cos(\t r)+-0.006410124711150102*0.04669489717628163*sin(\t r)},{0.006410124711150102*0.15948943509860558*cos(\t r)+0.9999794549395441*0.04669489717628163*sin(\t r)});
\draw [line width=0.6pt,color=ccqqqq] (4.661733930880424,2.0382880041809406)-- (3.479388461193139,1.9710630172181918);
\draw [line width=0.6pt,color=ccqqqq] (4.456172061147785,2.0254599022578303)-- (3.6058083359843316,2.5421790141730383);
\draw [line width=0.6pt,color=ccqqqq] (4.555818330406867,1.8126059106824717)-- (3.5963636363636406,1.3945454545454523);
\draw [line width=0.6pt,color=ccqqqq] (4.357998943965344,1.7264106338841674)-- (3.9092472521250574,1.0006791067500185);
\draw [line width=0.6pt,color=ccqqqq] (6.173002602008496,2.459173708889619)-- (6.273347829767765,2.4731235020676308);
\draw [shift={(6.574071100844763,1.9763283485870986)},line width=0.5pt,color=ccqqqq]  plot[domain=2.6970032045167347:3.101650575773221,variable=\t]({-0.999988842549758*1.1516305483343348*cos(\t r)+-0.004723851817655574*0.5134680959634843*sin(\t r)},{0.004723851817655574*1.1516305483343348*cos(\t r)+-0.999988842549758*0.5134680959634843*sin(\t r)});
\draw [line width=0.6pt,color=ccqqqq] (9.280333263036576,2.5476607186361226)-- (9.944357428549976,2.932095761828093);
\draw [line width=0.6pt,color=ccqqqq] (9.153009175579102,2.148052619645645)-- (10.087272727272733,2.176363636363634);
\draw [line width=0.6pt,color=ccqqqq] (9.305621351125948,2.152677231025853)-- (10.03220628305985,1.631282950895259);
\draw [line width=0.6pt,color=ccqqqq] (7.696002156975899,1.8562417566440226)-- (8.888125419557477,1.4979650308337773);
\draw [line width=0.6pt,color=ccqqqq] (8.272978842718722,1.6828391192446543)-- (8.650438415096108,1.251789204784519);
\draw [line width=0.6pt,color=ccqqqq] (8.424484883967729,1.5098227993530913)-- (8.332107605549632,1.205100686051039);
\begin{scriptsize}
\draw [fill=orange] (5.444454107279045,2.080497133485542) circle (2pt);
\draw [fill=orange] (6.433075016755525,1.469058401775782) circle (2pt);
\draw [fill=orange] (7.687272727272733,2.1036363636363613) circle (2pt);
\end{scriptsize}
\end{tikzpicture}
\end{minipage}
$$\mathrm{Figure \ 1.} \ {\color{orange}S}, {\color{blue}U_s}, s\in S, {\color{red}A_i}$$

\begin{prop} \label{1.12}
If $X$ has proper smooth models over the rings $\mathcal{O}^{\circ}(B_i)$, $i=1,2,\dots, n$,  then $X(\mathscr{M}_x) \neq \emptyset$ for all $x \in C$. Equivalently, $X(F_v) \neq \emptyset$ for all $v \in V(F).$

If, moreover, there exists a rational linear algebraic group $G/F$ acting strongly transitively on $X$, then $X(F) \neq \emptyset$.
\end{prop}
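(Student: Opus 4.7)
The plan is to cover $C$ by the open/closed pieces provided by Remark \ref{1.11} and verify that $X(\mathscr{M}_x) \neq \emptyset$ on each piece, then invoke Remark \ref{xval} and Theorem \ref{theoremval} for the final statement. Every $x \in C$ lies in at least one of the following four sets: (a) some $V_z$ with $z \in Q$; (b) the vertex set $S$; (c) a connected component of $C \setminus S$ which is contained in $\bigcup_{s \in S} U_s$; or (d) some $A_i$.

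For $x$ in case (a), since $V_z$ is a strict affinoid with $X(\mathscr{M}(V_z)) \neq \emptyset$, Lemma \ref{1.3bis} gives $X(\mathscr{M}_x) \neq \emptyset$. For $x$ in case (b), the point is type 2, so Remark \ref{1.2}(4) together with Hypothesis \ref{1.1.1} yields $X(\mathscr{M}_x) \neq \emptyset$. For $x$ in case (c), the point lies in some $U_s$; since $X(\mathscr{M}(U_s)) \neq \emptyset$, the restriction morphism $\mathscr{M}(U_s) \to \mathscr{M}_x$ carries an $\mathscr{M}(U_s)$-point of $X$ to an $\mathscr{M}_x$-point. The delicate case is (d), which I handle as follows: if $x \in A_i \cap V_z$ for some $z \in Q$ we reduce to case (a), so we may assume $x \in B_i = A_i \setminus \bigcup_{z \in Q} V_z$. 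Let $B_i^x$ be the connected component of $B_i$ containing $x$. It is open in $C$, and its boundary is contained in $\partial B_i \subseteq \partial A_i \cup \bigcup_{z \in Q} \partial V_z \subseteq S$, so it consists only of type 2 points. The proper smooth model $\mathcal{X}_i \to \mathrm{Spec}\,\mathcal{O}^{\circ}(B_i)$ provided by hypothesis can be base-changed along the restriction morphism $\mathcal{O}^{\circ}(B_i) \to \mathcal{O}^{\circ}(B_i^x)$, yielding a proper smooth model of $X$ over $\mathcal{O}^{\circ}(B_i^x)$. Since $B_i^x$ is a connected open neighborhood of $x$ with boundary consisting of type 2 points, Theorem \ref{1.5} (applied with $T_x := B_i^x$) gives $X(\mathscr{M}_x) \neq \emptyset$.

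Having verified the conclusion in all four cases, $X(\mathscr{M}_x) \neq \emptyset$ for every $x \in C$, and Remark \ref{xval} translates this into $X(F_v) \neq \emptyset$ for all $v \in V(F)$. The final assertion about $X(F) \neq \emptyset$ under the additional hypothesis of a rational linear algebraic group $G/F$ acting strongly transitively on $X$ follows at once from \cite[Corollary 3.18]{une}.

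The main point to check carefully is the combinatorial bookkeeping in case (d): one must ensure that restricting to the connected component $B_i^x$ is legitimate (needing $F \subseteq \mathrm{Frac}\,\mathcal{O}^{\circ}(B_i^x)$, guaranteed by Remark \ref{1.5.1}(2) since $\partial B_i^x$ consists only of type 2 points), and that the base change of a proper smooth model remains a proper smooth model, which is a standard fact. Beyond this, the proof is essentially an organized application of Theorem \ref{1.5} and Lemma \ref{1.3bis} to the decomposition constructed in Remark \ref{1.11}.
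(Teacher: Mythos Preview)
Your proof is correct and follows essentially the same strategy as the paper's. The only packaging difference is that in case~(d) you apply Theorem~\ref{1.5} directly with $T_x$ equal to the connected component $B_i^x$ of $B_i$ containing $x$, whereas the paper first enlarges the vertex set to some $S' \supseteq S \cup \bigcup_{s \in S} \partial U_s$, takes the corresponding regular model, and verifies that $U_x := \pi^{-1}(\pi(x)) \subseteq B_i$ via the argument of Lemma~\ref{ekstra} before invoking Theorem~\ref{1.5}; since the proof of Theorem~\ref{1.5} itself performs exactly this model-theoretic reduction internally, your shortcut is legitimate and slightly cleaner.
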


\begin{proof} If $x \in \bigcup_{s \in S} U_s \cup \bigcup_{z \in Q} V_z$, then $X(\mathscr{M}_x) \neq \emptyset.$ Assume $x \in \bigcup_{i=1}^n B_i$, and let $i_0 \in \{1,2,\dots, n\}$ be such that $z \in B_{i_0}$.
Let $\mathscr{C}$ be a proper regular model of $C^{\mathrm{al}}$ over $k^{\circ}$ corresponding to a vertex set $S'$ such that $S \cup \bigcup_{s \in S} \partial{U_s} \subseteq S'$ (see Corollary \ref{lordan1}). In particular, $\bigcup_{i=1}^n \partial{B_i} \subseteq S'$. We denote by $\mathscr{C}_s$ its special fiber and by $\pi$ the specialization morphism $C \rightarrow \mathscr{C}_s$. Set $P_x:=\pi(x)$ and $U_x:=\pi^{-1}(P_x).$ The proof of Lemma~\ref{ekstra} can be applied \emph{mutatis mutandis} to show that $U_x \subseteq B_{i_0}$, meaning $X$ has a proper smooth model over $\mathcal{O}^{\circ}(U_x)$. By Theorem \ref{1.5}, ${X(\mathscr{M}_x) \neq \emptyset}$, so we can conclude by relation \eqref{artin} and \cite[Cor.~3.18]{une}.
\end{proof}
\end{rem}

Let us illustrate how strong the hypotheses of Corollary \ref{1.7} or Proposition \ref{1.12} are with an example where the variety $X$ is given by a quadratic form.

\begin{ex} \label{1.8.1}
Let $q$ be a diagonal quadratic form over $F$ with (non-zero) coefficients ${a_1, a_2, \dots, a_m}.$ Let $Q$ be a finite closed subset of $C$ containing all the zeroes and poles of the $a_i$. We use the same notation as in Remark~\ref{1.11}. Fix $i \in \{1,2,\dots, n\}.$  

By multiplying with a high enough power of the uniformizer $t \in k$ (this does not change the Weil divisors of $a_1, a_2, \dots, a_m$ in $C$), we may assume that ${a_1, a_2, \dots, a_m \in \mathcal{O}^{\circ}(B_i)}$. Moreover, we may assume ${t^{-2}a_j \not \in \mathcal{O}^{\circ}(B_i)}$ for all $j \in \{1, 2, \dots, m\}$.

The quadratic form $q$ is smooth over $\mathcal{O}^{\circ}(B_i)$ if and only if for any maximal ideal~$M$ of $\mathcal{O}^{\circ}(B_i)$ and any $j \in \{1,2,\dots, m\}$, $a_j \not \in M$. Consequently, $a_j \in \mathcal{O}^{\circ}(B_i)^{\times}$ for all such $j$. This implies that for any $x \in B_i$ and any $j$, $|a_j|_x=1.$
\end{ex}

In Section \ref{quad}, by using the techniques of this section and a theorem of Springer, we prove a general result for quadratic forms (provided $\mathrm{char} \ \widetilde{k} \neq 2$).

\begin{rem} \label{1.8.2} An argument similar to that of Example \ref{1.8.1} can be applied to a general variety $X$. 
As the smoothness of $X$ is checked by the non-vanishing of minors $\epsilon$ of a certain matrix, it suffices to show that for any $i \in \{1, 2, \dots, n\}$, there exists a model of $X$ over $\mathcal{O}^{\circ}(B_i)$ with corresponding minors $\epsilon_i$ satisfying $|\epsilon_i|_x=1$ for all $x \in B_i$.
\end{rem}

\begin{ex}\label{1.8.3}
Let $k=\mathbb{Q}_p$, $p \neq 2,$ $C=\mathbb{P}_{\mathbb{Q}_p}^{1, \mathrm{an}}$, and $F=\mathbb{Q}_p(T)$. Set $q:=X_1^2-(1+pT)X_2^2+TX_3^2 - (T+p)X_4^2$. The bad reduction points in $\mathbb{P}_{\mathbb{Q}_p}^{1, \mathrm{an}}$ of the corresponding quadric are those for which $|T|=0$, $|T|=\infty$, $|T+p|=0$ and $|1+pT|=0$.  

Set $X_1=\sqrt{1+pT}$, $X_2=1$, and $X_3=X_4=0$; this is a zero of $q$ defined on a neighborhood $N$ of the rigid point $|T|=0$. The open $N$ is a disk centered at $0$ and of radius the radius of convergence of the series expansion of $\sqrt{1+pT}$. The latter is strictly larger than $1$, so said zero is defined over the open unit disk $D$ (\emph{i.e.} $D \subsetneq N$). Remark that the point $|T+p|=0$ is in $D$, and that $\eta_{0,1} \in N$ (\emph{cf.} Example~\ref{shembulli}) since $|T|_{\eta_{0,1}}=1$.

Set $X_1=X_2=0$, $X_3=\sqrt{1+\frac{p}{T}}$ and $X_4=1$; it is a zero of $q$ on a neighborhood of the point $|T|=\infty$. Similarly, this solution is defined on the open disk $D_{\infty}$ centered at $\infty$ and of radius $1$. We remark that $D_{\infty}$ contains the point $|1+pT|=0$. 

To the Gauss point $\eta_{0,1} \in \mathbb{P}_{\mathbb{Q}_p}^{1, \mathrm{an}}$ corresponds the model $\mathbb{P}_{\mathbb{Z}_p}^{1, \mathrm{an}}$ (\emph{cf.} Example~\ref{shembulli}); let $\pi: \mathbb{P}_{\mathbb{Q}_p}^{1, \mathrm{an}} \rightarrow \mathbb{P}_{\mathbb{F}_p}^{1}$ be the specialization morphism.
Let $U$ be a connected component of $\mathbb{P}_{\mathbb{Q}_p}^{1, \mathrm{an}} \backslash (D \cup D_{\infty} \cup \{\eta_{0,1}\})$. Set $P=\pi(U).$ By Theorem \ref{bosh}, $\mathcal{O}^{\circ}(U)=\widehat{\mathcal{O}_{\mathbb{A}_{\mathbb{Z}_p}^{1},P}}$ is a local ring; let $m$ be its maximal ideal. As the rigid points $|T|=0$, $|T+p|=0$ and $|1+pT|=0$ are not in $U$, their images via $\pi$ are different from $P$, so the ideals $(T), (T+p)$, $(1+pT)$ are \emph{not} contained in $m$ (\emph{cf.} proof of Lemma~\ref{2.2}). Thus, the coefficients of $q$ are in $\mathcal{O}^{\circ}(U)^{\times}$, meaning that the quadric defined by $q$ is proper and smooth over $\mathcal{O}^{\circ}(U)$.

A crucial point here is that the Weil divisors in $\mathbb{P}_{\mathbb{Z}_p}^{1}$ of the coefficients of $q$ are \emph{not} vertical; see Section ~\ref{seksioni2}.
We remark that, in this particular case, as $N \cup D_{\infty}=\mathbb{P}_{\mathbb{Q}_p}^{1, \mathrm{an}}$, $q=0$ has non-trivial solutions over $\mathscr{M}_x$ for all $x \in \mathbb{P}_{\mathbb{Q}_p}^{1, \mathrm{an}}$, so it has one over $\mathbb{Q}_p(T).$
\end{ex}

\section{A smoothness criterion over fine models of algebraic curves} \label{seksioni2}
We now use Bosch's theorem to interpret over a model of the algebraic curve $C$ the results of Section \ref{section1}.  Let us start with a local statement. 

\begin{lm} \label{2.1} Let $\mathcal{C}$ be a proper regular model of $C^{\mathrm{al}}$ over $k^{\circ}.$ Let $\mathcal{C}_s$ denote its special fiber and $\pi$ the corresponding specialization morphism $C \rightarrow \mathcal{C}_s$.  
Let $P \in \mathcal{C}_s$ be a closed point. If $X$ has a smooth proper model over $\mathcal{O}_{\mathcal{C}, P}$, then $X(\mathscr{M}(\pi^{-1}(P))) \neq \emptyset.$ In particular, for any $x \in \pi^{-1}(P),$ $X(\mathscr{M}_x) \neq \emptyset.$ 
\end{lm}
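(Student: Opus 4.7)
The plan is to reduce the lemma to the purely analytic situation handled in Theorem \ref{1.5}, using Bosch's theorem (Theorem \ref{bosh}) as the bridge between the algebraic model~$\mathcal{C}$ and the analytic curve~$C^{\mathrm{an}}$, so that the rational point is produced by essentially the same argument as in Lemma \ref{1.5.2}.

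Set $U := \pi^{-1}(P)$. First I would check that $U$ has the right shape for the strategy. Since $P$ is closed in $\mathcal{C}_s$ and $\pi$ is anti-continuous, $U$ is open in $C^{\mathrm{an}}$. By Theorem \ref{lordan}, $U$ is a connected component of $C^{\mathrm{an}} \setminus S_{\mathcal{C}}$, and $\partial U$ is a finite set of type~$2$ points, namely the preimages of the generic points of the irreducible components of $\mathcal{C}_s$ passing through $P$. The argument of Remark \ref{1.5.1}(2), applied to this boundary description, then gives $F \subseteq \mathrm{Frac}\,\mathcal{O}^{\circ}(U)$. Now I invoke Theorem \ref{bosh}: it identifies $\mathcal{O}^{\circ}(U)$ with $\widehat{\mathcal{O}_{\mathcal{C}, P}}$, which is a $2$-dimensional complete regular local ring since $\mathcal{C}$ is a regular surface.

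Base-changing the assumed smooth proper $\mathcal{O}_{\mathcal{C}, P}$-model of $X$ along the flat map $\mathcal{O}_{\mathcal{C}, P} \hookrightarrow \widehat{\mathcal{O}_{\mathcal{C}, P}} = \mathcal{O}^{\circ}(U)$ produces a smooth proper model $\mathcal{X} \to \mathrm{Spec}\,\mathcal{O}^{\circ}(U)$ of $X$. From this point the proof of Lemma \ref{1.5.2} applies verbatim: choose a regular system of parameters $(\alpha, \beta)$ of $\mathcal{O}^{\circ}(U)$; use Hypothesis \ref{2.0.1} together with Remark \ref{1.6bis} to obtain a rational point of $X$ over the complete discretely valued field $\mathrm{Frac}\,\widehat{\mathcal{O}^{\circ}(U)_{(\alpha)}}$; descend by the valuative criterion of properness; reduce modulo $(\alpha)$ and repeat the valuative criterion over the DVR $\mathcal{O}^{\circ}(U)/(\alpha)$; finally lift the resulting section through the Henselian couple $(\mathcal{O}^{\circ}(U), (\alpha))$ via \cite[Th\'eor\`eme I.8]{gruson}, using the smoothness of $\mathcal{X}$. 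The outcome is $\mathcal{X}(\mathcal{O}^{\circ}(U)) \neq \emptyset$, hence $X(\mathrm{Frac}\,\mathcal{O}^{\circ}(U)) \neq \emptyset$, hence $X(\mathscr{M}(U)) = X(\mathscr{M}(\pi^{-1}(P))) \neq \emptyset$. The \emph{in particular} assertion then follows from the restriction maps $\mathscr{M}(U) \to \mathscr{M}_x$ for any $x \in U$.

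The main obstacle is not the arithmetic in the interior of the argument, which is already carried out in Lemma \ref{1.5.2}, but rather the verification that passing from $\mathcal{O}_{\mathcal{C}, P}$ to its completion preserves the containment $F \subseteq \mathrm{Frac}\,\mathcal{O}^{\circ}(U)$, so that the base-changed model is genuinely a model of the $F$-variety $X$. This hinges on the boundary $\partial U$ containing only type~$2$ points, which is automatic here because $P$ is a closed point on a regular two-dimensional scheme and thus $\pi^{-1}(P)$ is a connected component of $C^{\mathrm{an}}$ cut out by type~$2$ boundary points via Theorem \ref{lordan}.
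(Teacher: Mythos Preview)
Your proof is correct and follows essentially the same approach as the paper: identify $U=\pi^{-1}(P)$ as a connected open with type~2 boundary via Theorem~\ref{lordan}, use Bosch's theorem to get $\mathcal{O}^{\circ}(U)=\widehat{\mathcal{O}_{\mathcal{C},P}}$, base-change the model, and invoke the proof of Lemma~\ref{1.5.2}. You have spelled out more detail (in particular the containment $F\subseteq\mathrm{Frac}\,\mathcal{O}^{\circ}(U)$), but the logical skeleton is identical.
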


\begin{proof}
By Theorem \ref{lordan}, $U:=\pi^{-1}(P)$ is a connected open subset of $C$, and its boundary contains only type 2 points. By assumption, $X$ has a proper smooth model~$\mathcal{X}$ over $\mathcal{O}^{\circ}(U)=\widehat{\mathcal{O}_{\mathcal{C}, P}}$ (Theorem \ref{bosh}). By the proof of Lemma \ref{1.5.2}, $\mathcal{X}(\mathcal{O}^{\circ}(U)) \neq \emptyset$, implying $X(\mathscr{M}(U)) \neq \emptyset$, and so for all $x \in \pi^{-1}(P)$, $X(\mathscr{M}_x) \neq \emptyset.$
\end{proof}

Let us show that if $z \in C^{\mathrm{al}}$ is a bad-reduction point of $X$, then the hypothesis of Lemma~\ref{2.1} is never satisfied for $P:=\pi(z).$  Recall that for a closed point $z \in C^{\mathrm{al}}$, the Zariski closure $\overline{\{z\}}$ of $\{z\}$ in $\mathcal{C}$ is some set $\{z, P_z\}$, where $P_z$ is a closed point of~$\mathcal{C}_s$ (see \cite[Def. 10.1.31]{liulibri}). 

\begin{lm} \label{2.2} We use the same notation as in Lemma \ref{2.1}.
Let $z \in C^{\mathrm{al}}$ be a Zariski closed point which we identify with a rigid point in $C$. Then ${\pi(z)=P_z}$. 
\end{lm}

\begin{proof}
Let $V=\mathrm{Spec} \ A$ be an open neighborhood of $P_z$ and $\pi(z)$ in $\mathcal{C}$ (\emph{cf.} \cite[2.2]{gabber}). Then $z$ corresponds to a principal prime ideal $(a)$ of $A$, so $P_z$ is the unique maximal ideal of $A$ containing both $a$ and the uniformizer $t \in k$. 
From \cite[7.1.10]{dejong}, this coincides with $\pi(z)$.
\end{proof}

\begin{rem} \label{2.3}
If $X$ has a  proper smooth  model over $\mathcal{O}_{\mathcal{C}, P}$ for some closed point ${P \in \mathcal{C}_s}$, then it has a proper smooth model over some open neighborhood $N$ of $P$ in~$\mathcal{C}$. If $z \in C^{\mathrm{al}}$ is any Zariski closed point for which $\overline{\{z\}}=\{z, P\},$ meaning $P=\pi(z)$ by Lemma~\ref{2.2}, then clearly $z \in N$, so $X$ has a proper smooth model over $\kappa(z).$ Thus,~$X$ has good reduction over $z$, which is why the bad reduction points of $X$ in $C$ will automatically be excluded in Lemma~\ref{2.1}.
\end{rem}

\begin{rem} \label{ecuditshme}
We use the notation of Remark \ref{1.11}. By Theorem \ref{lordan}, we may assume the vertex set $S$ of $C$ corresponds to a \emph{regular} proper model $\mathscr{C}$ over $k^{\circ}$ of $C^{\mathrm{al}}$. Let us denote by $\mathscr{C}_s$ its special fiber and by $\pi$ the specialization morphism $C \rightarrow \mathscr{C}_s$. By \emph{loc.cit.}, for $i \in \{1,2,\dots, n\}$, ${\pi(B_i)=\pi(A_i)=:\{Q_i\}}$, where $Q_i$ is a closed point of $\mathscr{C}$. By Theorem~\ref{bosh}, $\widehat{\mathcal{O}_{\mathscr{C}, Q_i}}=\mathcal{O}^{\circ}(A_i) \subseteq \mathcal{O}^{\circ}(B_i)$. Hence, by Proposition \ref{1.12}, if $X$ has proper smooth models over the rings $\mathcal{O}_{\mathscr{C}, Q_i}$, then $X(\mathscr{M}_x) \neq \emptyset$ for all $x \in C$, and, equivalently, $X(F_v) \neq \emptyset$ for all~${v \in V(F)}.$ See Figure 2 below for an illustration.
\end{rem}

We will now give similar, but global, versions of Lemma \ref{2.1}. In order to deal with the bad-reduction points of $X$ on $C$, we construct and work on ``fine enough" models of $C^{\mathrm{al}}$.

\begin{center}
\definecolor{ccqqqq}{rgb}{0.8,0,0}
\definecolor{qqqqff}{rgb}{0,0,1}
\definecolor{zzttff}{rgb}{0.6,0.2,1}
\definecolor{ffvvqq}{rgb}{1,0.3333333333333333,0}
\definecolor{zzttqq}{rgb}{0.6,0.2,0}
\begin{tikzpicture}[line cap=round,line join=round,>=triangle 45,x=1cm,y=1cm, scale=0.9]
\draw [line width=0.4pt] (-0.48,4.72)-- (-0.5,1.14);
\draw [line width=0.4pt] (-0.48670432557268584,3.519925722489233)-- (1.02,4.66);
\draw [line width=0.4pt] (0.3916767533585498,4.18456818547614)-- (1.3,3.86);
\draw [line width=0.4pt] (-0.4939666687472692,2.219966294238812)-- (-1.24,1.06);
\draw [shift={(-0.48801748977686904,1.172463631037081)},line width=0.4pt,color=zzttqq]  plot[domain=1.5396519666041433:4.782244576907428,variable=\t]({0.0029446125003494243*1.635448426077079*cos(\t r)+0.9999956646192136*0.8724581894977228*sin(\t r)},{-0.9999956646192136*1.635448426077079*cos(\t r)+0.0029446125003494243*0.8724581894977228*sin(\t r)});
\draw [line width=0.4pt] (-0.4952515111644949,1.9899795015554131)-- (0.16716501042297213,1.3018540097524218);
\draw [line width=0.4pt] (-0.32490499689771574,1.8130216886285284)-- (0.1958362125084269,1.8007329260393339);
\draw [line width=0.4pt] (-0.48392250820781246,4.01787103080157)-- (-1.245799103612277,4.607850161634012);
\draw [line width=0.4pt] (2.82398304652662,4.517854978952808)-- (4.293904363652954,2.517962030481606);
\draw [line width=0.4pt] (4.343901687364734,3.387915463066579)-- (2.933977158692536,1.2480300082023923);
\draw (1.5113920028598045,3.0981851252927095) node[anchor=north west] {$\xrightarrow{\pi}$};
\draw (-2.306170624043538,0.8839999266400177) node[anchor=north west] {Figure 2. ${\color{orange}{Z}}, {\color{brown}{N_z}}, z \in Z, {\color{violet}{S}}, {\color{blue}{U_s}}, s \in S, Q_i=\pi(A_i)$}; 
\draw [rotate around={89.67991517621833:(-0.48995576622048576,2.9379178465330518)},line width=0.4pt,color=qqqqff] (-0.48995576622048576,2.9379178465330518) ellipse (0.5204057489828408cm and 0.3398115734002314cm);
\draw [rotate around={89.67991517621881:(-0.4830608109331277,4.17211484297011)},line width=0.4pt,color=qqqqff] (-0.4830608109331277,4.17211484297011) ellipse (0.3773481979638565cm and 0.24949358394142745cm);
\draw [line width=0.8pt,color=ccqqqq] (-0.4851688685285512,3.794772533389337)-- (-0.4870485169007201,3.458315474771101);
\draw [line width=0.8pt,color=ccqqqq] (-0.4867043255726858,3.5199257224892326)-- (1.02,4.66);
\draw [line width=0.8pt,color=ccqqqq] (0.3916767533585488,4.184568185476139)-- (1.2977546394023909,3.860802327384662);
\draw [line width=0.8pt,color=ccqqqq] (-0.7312203014688534,4.209372565308579)-- (-1.2457991036122757,4.607850161634011);
\draw [line width=0.8pt,color=ccqqqq] (-0.48095275333770454,4.549457152550886)-- (-0.47942123842378154,4.891529290251608);
\draw [line width=0.8pt,color=ccqqqq] (-0.47926828915323627,4.669622182598323)-- (-0.5985747000920059,4.853170507119505);
\draw [line width=0.8pt,color=ccqqqq] (-0.9483588491124162,4.3775194966905016)-- (-1.269690807371017,4.389016375516796);
\begin{scriptsize}
\draw [fill=ffvvqq] (-0.4999606679691679,1.1470404335189528) circle (1.5pt);
\draw [fill=zzttff] (-0.49068208754217607,2.807906329950484) circle (1.5pt);
\draw [fill=ffvvqq] (3.1842244329397147,1.627837927981798) circle (1.5pt);
\draw [fill=zzttff] (-0.4839225082078124,4.01787103080157) circle (1.5pt);
\draw (0.4390359334533774,3.9874852485355086) node {$A_3$};
\draw (-0.2678837292759286,4.748144473447588) node {$A_2$};
\draw (-1.3858247508494508,4.548144473447588) node {$A_1$};
\draw [fill=ccqqqq] (4.018020006351946,2.893314897557809) circle (2pt);
\draw (3.949975988607363,3.203501509273557) node {$Q_3$};
\draw [fill=ccqqqq] (3.0187093101124347,4.25292128699932) circle (2pt);
\draw (3.3445721034063343,4.279676511713913) node {$Q_1$};
\draw [fill=ccqqqq] (3.3216209694163403,3.8407965804633943) circle (2pt);
\draw (3.64659856035672,3.865788404041164) node {$Q_2$};
\end{scriptsize}
\end{tikzpicture}
\end{center}

\subsection{Removing a finite set from the special fiber} \label{modeli1}

Let $Z \subseteq C$ be a Zariski closed subset such that $X$ has good reduction over $C \backslash Z$.

\begin{cons}[The model $\mathscr{C}_1$] \label{modelI} For any $z \in Z$, there exists a strict affinoid neighborhood $V_z$ of~$z$ in $C$ such that $X(\mathscr{M}(V_z)) \neq \emptyset$. Let $\mathscr{C}_1$ denote a \emph{proper regular model} of $C^{\mathrm{al}}$ over $k^{\circ}$ corresponding to a vertex set $S$ of $C$ such that $\bigcup_{z \in Z} \partial{V_z} \subseteq S$. 
We will denote by $\mathscr{C}_{1,s}$ its special fiber and by $\pi_1$ the corresponding specialization morphism. 
\end{cons}
See Figure 3 for a couple of illustrations of $\mathscr{C}_{1,s}$ and $\pi_1$, where the bijection of Theorem~\ref{lordan} associates to $\eta_i \in S$ the irreducible component $I_i$ of $\mathscr{C}_{1,s}$.

\begin{center}
\definecolor{zzttff}{rgb}{0.6,0.2,1}
\definecolor{qqqqff}{rgb}{0,0,1}
\definecolor{ffvvqq}{rgb}{1,0.3333333333333333,0}
\begin{tikzpicture}[line cap=round,line join=round,>=triangle 45,x=1cm,y=1cm, scale=0.6]
\draw [line width=0.4pt] (0.927784096234669,4.555077671694948)-- (0.905483209156678,-2.177996760609003);
\draw [line width=0.4pt] (0.9186144926466258,1.786594819102608)-- (2.785709689851428,-0.5503145057225194);
\draw [line width=0.4pt] (1.9284535960297908,0.5226515446503277)-- (3.238481473169798,0.46451880171520354);
\draw [line width=0.4pt] (0.9225013737928455,2.9601203374145832)-- (-1.507261850772056,3.8153344131303286);
\draw [line width=0.4pt] (0.9129657388745637,0.08112545978783547)-- (-1.0519537846884799,-1.8507215203541716);
\draw [line width=0.4pt] (-0.3847844522123841,3.4202512786027146)-- (-1.4819669582118573,3.0058978512039713);
\draw [shift={(-1.2839982996141128,3.470879812317323)},line width=0.4pt,color=qqqqff]  plot[domain=-2.134249374401019:1.560908047156625,variable=\t]({0.925847643695199*1.0227093951920805*cos(\t r)+-0.377896997426611*0.6266859925815617*sin(\t r)},{0.377896997426611*1.0227093951920805*cos(\t r)+0.925847643695199*0.6266859925815617*sin(\t r)});
\draw [line width=0.4pt] (-0.6075623452445441,3.4986633592339524)-- (-1.1465596851370305,4.0359052273897715);
\draw [line width=0.4pt] (-0.8951453447492822,3.78530972363827)-- (-0.5357213985722202,3.95955044156917);
\draw [shift={(0.7908485439104334,-2.1149545541303563)},line width=0.4pt,color=qqqqff]  plot[domain=1.5781591313997856:4.762433771929928,variable=\t]({-0.07449966818208802*2.7397071622564027*cos(\t r)+0.9972210384066105*0.7974006734735172*sin(\t r)},{-0.9972210384066105*2.7397071622564027*cos(\t r)+-0.07449966818208802*0.7974006734735172*sin(\t r)});
\draw [shift={(-1.1459424669443607,-1.9050082425218828)},line width=0.4pt,color=qqqqff]  plot[domain=1.882744566759224:4.439873151832653,variable=\t]({-0.7387900377168731*0.8001336383852711*cos(\t r)+0.673935664711626*0.2945809417293864*sin(\t r)},{-0.673935664711626*0.8001336383852711*cos(\t r)+-0.7387900377168731*0.2945809417293864*sin(\t r)});
\draw [line width=0.4pt] (4.422382129975567,4.193397938884286)-- (6.578284069220799,2.0030015686111486);
\draw [line width=0.4pt] (6.423059129595142,3.1585650080465832)-- (4.387887698947643,0.33002166375686276);
\draw [line width=0.4pt] (4.45687656100349,1.330360163566642)-- (6.802497870902303,-1.2394749480136529);
\draw [line width=0.4pt] (6.854239517444189,-0.29087809474575876)-- (4.73283200922688,-2.0155996461419297);
\draw (2.3258450408004404,2.3859780966795148) node[anchor=north west] {$\xrightarrow{\pi_1}$};
\draw (0.17722103690939937,3.501662770067288) node {$\eta_1$};
\draw (0.50768877028758166,0.7215292218543007) node {$\eta_2$};
\draw (-0.1299064066937825,-0.37267249325840144) node  {$\eta_3$};
\draw (-0.9445690541782527,-0.9819644011741126) node  {$\eta_4$};
\draw (4.692765512750745,4.440707518270488) node[anchor=north west] {$I_1$};
\draw (5.447922479660237,2.2103601973982347) node[anchor=north west] {$I_2$};
\draw (5.266933840810193,0.5644283642979496) node[anchor=north west] {$I_3$};
\draw (5.412798899803981,-1.0532335867899188) node[anchor=north west] {$I_4$};
\draw [rotate around={0.8184554616886093:(12.39546184616989,0.9280946496584714)},line width=0.4pt] (12.39546184616989,0.9280946496584714) ellipse (1.2480330567423494cm and 0.8199896966589879cm);
\draw [line width=0.4pt] (13.148744441510996,1.5880084303439157)-- (14.70703495902736,3.4681371980890523);
\draw [line width=0.4pt] (14.048168215451849,2.673192799222254)-- (13.524369645472373,3.817561040730296);
\draw [line width=0.4pt] (13.477749880424149,0.5285997175088331)-- (14.868307501784859,-0.45616134234337985);
\draw [line width=0.4pt] (14.307451315009098,-0.05897583601857592)-- (14.164522513244712,-0.6709262169763286);
\draw [line width=0.4pt] (11.389573320597911,1.4052927073124146)-- (9.788222404923665,2.5811382129228178);
\draw [line width=0.4pt] (11.588778517974852,0.29585940189973603)-- (10.352676304574908,-0.3755250709646313);
\draw [line width=0.4pt] (10.728978904342403,3.1993496268265567)-- (10.654791633139563,1.9448307538117011);
\draw [line width=0.4pt] (13.766610755995323,3.2883250909624926)-- (13.061337277227128,3.517850238908914);
\draw [shift={(10.711598522290544,3.2215993230316986)},line width=0.4pt,color=qqqqff]  plot[domain=1.5902595759905616:4.676352420201576,variable=\t]({0.03771432069835844*0.9522161110225346*cos(\t r)+-0.9992885619350705*0.3768214500759947*sin(\t r)},{0.9992885619350705*0.9522161110225346*cos(\t r)+0.03771432069835844*0.3768214500759947*sin(\t r)});
\draw [shift={(14.55696481678566,-0.5068548363155911)},line width=0.4pt,color=qqqqff]  plot[domain=1.031491463195791:4.332507966733717,variable=\t]({0.8189368223496922*1.0826684496284427*cos(\t r)+0.5738836824651742*0.623708391929002*sin(\t r)},{-0.5738836824651742*1.0826684496284427*cos(\t r)+0.8189368223496922*0.623708391929002*sin(\t r)});
\draw [shift={(10.860920869101705,-0.07410333920287389)},line width=0.4pt,color=qqqqff]  plot[domain=-2.289101442939252:2.405270907717504,variable=\t]({0.9431329885558617*0.8426183781520289*cos(\t r)+-0.33241565230549697*0.38504817873110847*sin(\t r)},{0.33241565230549697*0.8426183781520289*cos(\t r)+0.9431329885558617*0.38504817873110847*sin(\t r)});
\draw [line width=0.4pt] (11.217564925237733,0.09423607368451459)-- (10.557081973361718,0.058699112105521124);
\draw [line width=0.4pt] (10.855995023689662,0.07478198230647357)-- (10.668178088735099,0.22826686714910574);
\draw [line width=0.4pt] (16.95097208629959,2.8735687970109414)-- (19.982924454830396,3.3283616522905697);
\draw [line width=0.4pt] (17.79318107755815,3.3957383715912552)-- (17.102569704726132,0.29640928375971615);
\draw [line width=0.4pt] (16.715153568747194,2.2840225031299424)-- (19.73026175745283,1.0207090162420867);
\draw [line width=0.4pt] (16.301425521820278,0.8448907692880124)-- (21.91053740360227,1.6197230412458972);
\draw [line width=0.4pt] (21.364147200494433,2.1661132443537423)-- (20.75775672678827,-0.4615788083729972);
\draw (14.878603671064822,1.8240008189794195) node[anchor=north west] {$\xrightarrow{\pi_1}$};
\draw (4.338787780550815,-3.1365353719023323) node[anchor=north west] {Figure 3. ${\color{orange} Z, \pi_1(Z)}, {\color{blue} V_z}, z \in Z, \eta_i \in {\color{violet}S}$};
\begin{scriptsize}
\draw [fill=ffvvqq] (-1.507261850772056,3.8153344131303286) circle (2.5pt);
\draw [fill=ffvvqq] (-1.0254218052634183,-1.8246361139470166) circle (2.5pt);
\draw [fill=ffvvqq] (0.905483209156678,-2.1779967606090027) circle (2.5pt);
\draw [fill=zzttff] (0.23215918534341715,-0.5882220980781063) circle (2.5pt);
\draw [fill=zzttff] (-0.5566744587860053,-1.363778488541619) circle (2.5pt);
\draw [fill=zzttff] (0.9147156709489045,0.6094642145111961) circle (2.5pt);
\draw [fill=zzttff] (-0.38987893245843724,3.4220444044393528) circle (2.5pt);
\draw [fill=ffvvqq] (6.112609250343828,-0.8945306377344187) circle (2.5pt);
\draw [fill=ffvvqq] (4.8207813061685565,0.9316704059960945) circle (2.5pt);
\draw [fill=ffvvqq] (5.3977454624438215,3.202428793096548) circle (2.5pt);
\draw [fill=ffvvqq] (14.164522513244712,-0.6709262169763286) circle (2.5pt);
\draw [fill=ffvvqq] (10.728978904342403,3.1993496268265567) circle (2.5pt);
\draw [fill=zzttff] (10.674028685641886,2.270132453605452) circle (2.5pt);
\draw (11.044762727164214,2.2718264621466826) node {$\eta_1$};
\draw [fill=zzttff] (13.86062223201979,0.25745828693906836) circle (2.5pt);
\draw (14.153199544442819,0.3912653686151653) node {$\eta_5$};
\draw [fill=zzttff] (11.588778517974852,0.29585940189973603) circle (2.5pt);
\draw (11.910661173211238,0.4210182089026535) node {$\eta_3$};
\draw [fill=ffvvqq] (10.557081973361718,0.058699112105521124) circle (2.5pt);
\draw [fill=zzttff] (11.389573320597911,1.4052927073124146) circle (2.5pt);
\draw (11.401369265295534,1.6732760333685073) node {$\eta_2$};
\draw [fill=zzttff] (13.477749880424149,0.5285997175088331) circle (2.5pt);
\draw (13.09267183796456,0.5546922175370849) node {$\eta_4$};
\draw[color=black] (18.619264925756028,3.3296687988405695) node {$I_1$};
\draw[color=black] (17.35481605093083,2.574298059271802) node {$I_2$};
\draw[color=black] (18.508523446618492,1.796208562865403) node {$I_3$};
\draw[color=black] (17.86410795884653,0.9249393772495967) node {$I_4$};
\draw[color=black] (20.779365086915735,0.9276247470339806) node {$I_5$};
\draw [fill=ffvvqq] (19.31912509414813,3.2287917481882267) circle (2.5pt);
\draw [fill=ffvvqq] (18.052113805931334,1.7238436328013833) circle (2.5pt);
\draw [fill=ffvvqq] (20.895560523828152,0.13557097879983127) circle (2.5pt);
\end{scriptsize}
\end{tikzpicture}

\end{center}

\begin{rem} \label{2.4}
If  $\bigcup_{z \in Z} V_z =C,$ then for any $x \in C,$ $X(\mathscr{M}_x) \neq \emptyset$ (see Lemma \ref{1.3bis}), and, equivalently, ${X(F_v) \neq \emptyset}$ for all ${v \in V(F)}.$ Hence, without loss of generality, we may assume that $\bigcup_{z \in Z} V_z \neq C.$ 
\end{rem}

\begin{thm} \label{2.5}
If there exists a proper model $\mathcal{X} \rightarrow \mathscr{C}_1$ of $X/F$ which is smooth over $\mathscr{C}_{1,s} \backslash \pi_1(Z)$, then  $X(\mathscr{M}_x) \neq \emptyset$ for all $x \in C$, and, equivalently, $X(F_v) \neq \emptyset$ for all ${v \in V(F)}.$ 
Furthermore, if there exists a rational linear algebraic group $G/F$ acting strongly transitively on $X$, then $X(F) \neq \emptyset$. 
\end{thm}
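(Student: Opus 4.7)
The plan is to split $C^{\mathrm{an}}$ into two regions corresponding to the ``bad'' points and the ``good'' points for $X$, and show $X(\mathscr{M}_x)\neq\emptyset$ on each. Set $U:=C^{\mathrm{an}}\setminus\bigcup_{z\in Z}V_z$, which by Lemma~\ref{1.7.1} is open with $\partial U$ a finite set of type~2 points. For a point $x\in V_z$ (some $z\in Z$), the conclusion $X(\mathscr{M}_x)\neq\emptyset$ is immediate from Lemma~\ref{1.3bis}, since by construction $X(\mathscr{M}(V_z))\neq\emptyset$. The work is therefore concentrated on points $x\in U$.

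The key step, which I expect to be the crux of the argument, is a topological claim: for every $z\in Z$ (viewed as a rigid point of $C^{\mathrm{an}}$ via Remark~\ref{1.6.1}(2)),
\[
\pi_1^{-1}(\pi_1(z))\subseteq V_z.
\]
I would prove this by the same uniquely-arc-wise-connected argument used in Lemma~\ref{ekstra}. By Theorem~\ref{lordan}, $\pi_1^{-1}(\pi_1(z))$ is a connected component of $C^{\mathrm{an}}\setminus S$. Since $\partial V_z\subseteq S$ by construction of $\mathscr{C}_1$, any injective path inside $\pi_1^{-1}(\pi_1(z))$ that left $V_z$ would have to cross $\partial V_z\subseteq S$, contradicting $\pi_1^{-1}(\pi_1(z))\cap S=\emptyset$. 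Contrapositively, if $x\in U$, then $\pi_1(x)\notin\pi_1(Z)$: otherwise $\pi_1(x)=\pi_1(z)$ for some $z\in Z$, so $x\in\pi_1^{-1}(\pi_1(z))\subseteq V_z$, contradicting $x\in U$.

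With this in hand, for any $x\in U$ set $P:=\pi_1(x)$. By the previous step $P\notin\pi_1(Z)$, so the hypothesis supplies a smooth proper model of $X$ over the local ring $\mathcal{O}_{\mathscr{C}_1,P}$. Lemma~\ref{2.1} then yields $X(\mathscr{M}(\pi_1^{-1}(P)))\neq\emptyset$ and in particular $X(\mathscr{M}_x)\neq\emptyset$. Combining the two cases, $X(\mathscr{M}_x)\neq\emptyset$ for every $x\in C^{\mathrm{an}}$.

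To finish, Remark~\ref{xval} translates this into $X(F_v)\neq\emptyset$ for all $v\in V(F)$, which is the second stated conclusion. When a rational linear algebraic group $G/F$ acts strongly transitively on $X$, the final assertion $X(F)\neq\emptyset$ then follows directly from \cite[Corollary~3.18]{une} (i.e.\ Theorem~\ref{theoremval}). The only delicate point in carrying out this plan is ensuring that the containment $\pi_1^{-1}(\pi_1(z))\subseteq V_z$ holds uniformly for all $z\in Z$, which is precisely why $\mathscr{C}_1$ was constructed with $\bigcup_{z\in Z}\partial V_z\subseteq S$; everything else is a straightforward application of the local criterion of Section~\ref{section1} and of Bosch's theorem.
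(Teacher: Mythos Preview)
Your approach is essentially identical to the paper's: both split into the region $\bigcup_{z\in Z}V_z$ (handled by Lemma~\ref{1.3bis}) and its complement, and on the complement both use the connectedness argument of Lemma~\ref{ekstra} to show $\pi_1(x)\notin\pi_1(Z)$, then invoke Lemma~\ref{2.1}. Your formulation of the key step as a containment $\pi_1^{-1}(\pi_1(z))\subseteq V_z$ is a clean way to phrase what the paper does more directly by contradiction.

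There is one small gap. You apply Lemma~\ref{2.1} to every $x\in U$, but that lemma requires $P=\pi_1(x)$ to be a \emph{closed} point of $\mathscr{C}_{1,s}$. If $x\in S\cap U$ (which can happen, since Construction~\ref{modelI} only imposes $\bigcup_{z\in Z}\partial V_z\subseteq S$, and $S$ may contain type~2 points lying outside every $V_z$), then $\pi_1(x)$ is a generic point of an irreducible component, not a closed point, and Lemma~\ref{2.1} does not apply. The paper avoids this by treating $x\in S$ separately at the outset: since $S$ consists of type~2 points, Remark~\ref{1.2}(4) gives $X(\mathscr{M}_x)\neq\emptyset$ directly from Hypothesis~\ref{2.0.1}. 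You should split your complement case into $x\in S$ (handled by Remark~\ref{1.2}(4)) and $x\in C^{\mathrm{an}}\setminus(\bigcup_{z\in Z}V_z\cup S)$ (handled by your argument via Lemma~\ref{2.1}). With that adjustment the proof is complete and matches the paper's.
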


\begin{rem} \label{2.6}
Let $P$ be any closed point of $\mathscr{C}_1.$ As $\mathcal{O}_{\mathscr{C}_1, P}$ and $\mathcal{O}_{\mathscr{C}_{1,s}, P}$ are local rings with the same residue field, the respective base change of $\mathcal{X}$ is smooth over $\mathcal{O}_{\mathscr{C}_1, P}$ if and only if it is smooth over  $\mathcal{O}_{\mathscr{C}_{1,s}, P}$.
\end{rem}

\begin{proof}[Proof of Theorem \ref{2.5}]
 Let  ${x \in C \backslash (\bigcup_{z \in Z} V_z \cup S)}$. Set ${P_x:=\pi_1(x) \in \mathscr{C}_{1,s}}$. Assume there exists $z_0 \in Z$ such that $P_x=\pi_1(z_0).$ Set $U_x:=\pi_1^{-1}(P_x).$ Then $z_0 \in U_x$. By Theorem~\ref{lordan}, $U_x$ is a connected component of~$C \backslash S.$  By its connectedness, there exists an injective path $[x, z_0]$ connecting $x$ and $z_0$ entirely contained in $U_x$. As $x \not \in V_{z_0}$, the path $[x,z_0]$ must intersect $\partial{V_{z_0}}$, meaning $[x, z_0] \cap S \neq \emptyset.$ As a consequence, $U_x \cap S \neq \emptyset,$ contradiction. Thus, $P_x \not \in \pi_1(Z).$ By Lemma \ref{2.1},  $X(\mathscr{M}_x) \neq \emptyset$. If $x \in \bigcup_{z \in Z} V_z \cup S$, then $X(\mathscr{M}_x) \neq \emptyset$ by assumption. We have shown that $X(\mathscr{M}_x) \neq \emptyset$ for all $x \in C,$ so we can conclude by relation \eqref{artin} and \cite[Cor.~3.18]{une}.
\end{proof}

\begin{rem} \label{2.7}
In light of Remark \ref{2.3}, we can take $Z$ to be any Zariski closed subset of~$C^{\mathrm{al}}$, and then the hypothesis of Theorem \ref{2.5} will imply that $\pi_1^{-1}(\pi_1(Z))$ contains the bad reduction points of $X$ in $C$. We thus assumed directly that $Z$ itself contains them.
\end{rem}

\subsection{Removing irreducible components} \label{modeli2} Let $Z$ be as in Subsection \ref{modeli1}.
We show that by further refining the model $\mathscr{C}_1$ from Section \ref{modeli1}, more points from the special fiber can be forgotten in Theorem \ref{2.5}. We will do this in two steps, the first of which consists of being more restrictive when constructing the neighborhoods $V_z$ of $z \in Z.$

\begin{cond}\label{2.8}
From now on, throughout this section, we will assume that the curve~$C$ is smooth. (This is stronger than normal when $k$ is not a perfect field.)
\end{cond}

In practice, we only need $C$ to be smooth at the points of the subset $Z$. This ensures the existence of special neighborhoods of these points in $C$.

A Berkovich curve has the structure of a real graph (\emph{cf.} \cite[1.3.1]{duc}), so it makes sense to speak of branches issued from a point of $C$. See \cite[1.7]{duc} for some elements from the branch language which we use here. See also \cite[pg. 210]{duc} for the notion of a \emph{virtual disc}.   

\begin{lm} \label{2.10}
Let $N$ be a closed virtual disc in $C$. Then 
\begin{enumerate}
\item  $\partial{N}$ is a single type 2 point $\{\eta\}$,
\item there exists a unique branch $b$ issued from $\eta$ in $C$ not contained in $N$.
\end{enumerate}
If $z \in C$ is a rigid point, then there exists a neighborhood $N$ of $z$ which is a  closed virtual disc and $X(\mathscr{M}(N)) \neq \emptyset$.
\end{lm}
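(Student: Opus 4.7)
The plan is to prove (1) and (2) by Galois descent from the base change $N_{\widehat{\overline{k}}} := N \times_k \widehat{\overline{k}}$, and to prove (3) by combining the structure of smooth Berkovich analytic curves at rigid points with the fact that $X(\mathscr{M}_z) \neq \emptyset$ is forced by Hypothesis \ref{1.1.1}. (I read the ``$\eta$'' in the statement of (2) as a typo for the point $\tau$ introduced in (1).)

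For (1), I would start by recalling that a closed disc over $\widehat{\overline{k}}$ has a unique boundary point, namely its Gauss point, which is of type $2$. Writing $N_{\widehat{\overline{k}}} = \bigsqcup_{i=1}^{m} D_i$ as a disjoint union of finitely many closed discs, each $D_i$ has its Gauss point $\eta_i$ as its unique boundary point. Let $\rho \colon N_{\widehat{\overline{k}}} \to N$ be the base change morphism, which identifies the orbits of $G := \mathrm{Gal}(\overline{k}/k)$. Since $N$ is connected, $G$ acts transitively on the components $\{D_1,\dots,D_m\}$, hence on the set of Gauss points $\{\eta_1,\dots,\eta_m\}$. Their common image $\tau := \rho(\eta_i) \in \partial N$ is then the sole boundary point of $N$, and it is of type $2$ since this property is preserved under base change.

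For (2), the same philosophy applies. At each $\eta_i$, the local tree structure of $C^{\mathrm{an}} \times_k \widehat{\overline{k}}$ at a Gauss point of a closed sub-disc shows that exactly one branch issued from $\eta_i$ is not contained in $D_i$ (the others correspond to the residue disks interior to $D_i$). These $m$ outer branches form a single $G$-orbit by the connectedness of $N$; appealing to the base-change compatibility of branches in \cite[1.7]{duc}, they descend to a single branch issued from $\tau$ in $C^{\mathrm{an}}$ not contained in $N$.

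For (3), Condition \ref{2.8} ensures that $C$ is smooth at the rigid point $z$, so closed virtual disc neighborhoods of $z$ form a basis of neighborhoods of $z$ in $C^{\mathrm{an}}$. By Remark \ref{1.2}(4), $z$ corresponds to a discrete valuation $\emph{val}(z) \in V(F)$, and Hypothesis \ref{1.1.1} combined with \cite[Corollary 3.17]{une} yields $X(\mathscr{M}_z) \neq \emptyset$. Any such point extends, by the finite presentation of $X/F$, to a point over some neighborhood $V$ of $z$ (cf.\ Remark \ref{1.4.1}); shrinking to any closed virtual disc $N \subseteq V$ containing $z$ then yields $X(\mathscr{M}(N)) \neq \emptyset$ by restriction. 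The main obstacle will be the descent step underlying (1) and (2): one must carefully match the branches issued from the $\eta_i$ in $N_{\widehat{\overline{k}}}$ with those issued from $\tau$ in $C^{\mathrm{an}}$ via the description of branches as connected components of punctured neighborhoods; once this base-change compatibility is in hand, the remaining work reduces to the well-understood split case of closed discs over $\widehat{\overline{k}}$.
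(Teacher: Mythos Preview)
Your approach is essentially the paper's: base-change to $\widehat{\overline{k}}$, use the Galois action to reduce (1) and (2) to the split case of closed discs, and obtain (3) from Remark~\ref{1.2}(4) together with the fact that closed virtual discs form a basis of neighborhoods of a smooth rigid point (this is \cite[Th\'eor\`eme~4.5.4]{duc}, which you should cite). The descent of branches you invoke is exactly \cite[1.7.2]{duc}, as in the paper.

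The one place where you underestimate the difficulty is the ``well-understood split case'' in (2). You assert that for a closed disc $D_i \subset C^{\mathrm{an}}_{\widehat{\overline{k}}}$ with boundary $\eta_i$, exactly one branch issued from $\eta_i$ in $C^{\mathrm{an}}_{\widehat{\overline{k}}}$ lies outside $D_i$. This is true but not automatic: branches at $\eta_i$ are intrinsic to the ambient curve $C^{\mathrm{an}}_{\widehat{\overline{k}}}$, not to $D_i$, so one must argue that the outside of $D_i$ contributes only one. The paper handles this by also embedding $D_i$ into $\mathbb{P}^{1,\mathrm{an}}_{\widehat{\overline{k}}}$ and invoking \cite[4.2.11.1]{duc}, which says the count of branches at $\eta_i$ not contained in $D_i$ is the same in either ambient curve; in $\mathbb{P}^{1,\mathrm{an}}_{\widehat{\overline{k}}}$ it is visibly one. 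You should either insert this step or give the equivalent residue-curve argument (the reduction of $D_i$ is $\mathbb{A}^1$, forcing the residue curve at $\eta_i$ to be $\mathbb{P}^1$, whence a single missing point). Without it, the heart of (2) is asserted rather than proved.
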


\begin{proof}
From \cite[Th\'eor\`eme 4.5.4]{duc}, virtual discs form a basis of neighborhoods of $z$ in~$C$. As type 2 points are dense, there exists a neighborhood $N$--a closed virtual disc--of $z$ satisfying condition $(1)$ and such that $X(\mathscr{M}(N)) \neq \emptyset.$ To show condition~$(2)$, let~$\widehat{\overline{k}}$ be the completion of an algebraic closure of $k$. Then  the base change $N_{\widehat{\overline{k}}}$ is a finite disjoint union of closed discs embedded in $C_{\widehat{\overline{k}}}$. Let $D$ be one of those discs, and~$\omega$ its unique boundary point. Consider an embedding of $D$ in $\mathbb{P}_{\widehat{\overline{k}}}^{1, \mathrm{an}}$. It suffices to show that there exists a unique branch issued from $\omega$ in  $C_{\widehat{\overline{k}}}$ that is not contained in~$D$ (\emph{cf.} \cite[1.7.2]{duc}). Moreover, one can reduce to the case of $\mathbb{P}_{\widehat{\overline{k}}}^{1, \mathrm{an}}$ (\emph{cf.} \cite[4.2.11.1]{duc}), thus condition $(2)$ is also satisfied. 
\end{proof}

\begin{cons}[The model $\mathscr{C}_2$] \label{model2} For any point $z \in Z$, let $N_z$ denote an affinoid neighborhood of $z$ in $C$ satisfying the conditions of Lemma \ref{2.10}. As in Remark \ref{2.4}, without loss of generality, we may assume  $\bigcup_{z \in Z} N_z\neq C$. We may also assume that $N_z, z \in Z$, are mutually disjoint. Let $s_0 \in C$ be a type 2 point such that $s_0 \not \in \bigcup_{z \in Z} N_z$. Set $T:=\bigcup_{{z \in Z}} \partial{N_z}.$ 

Let $\mathscr{C}_2$ be a proper sncd model of the curve $C^{\mathrm{al}}$ over $k^{\circ}$ such that the corresponding vertex set $S$ of $C$ satisfies $T \cup \{s_0\} \subseteq S$ (\emph{cf.} Corollary \ref{lordan1}).  We denote by $\mathscr{C}_{2,s}$ the special fiber of $\mathscr{C}_2$, and by $\pi_2:C \rightarrow \mathscr{C}_{2,s}$ the specialization morphism. 
\end{cons}
Given a point~$y \in S$, let $I_y$ be the unique corresponding irreducible component of~$\mathscr{C}_{2,s}$. See Figure 4 below for an illustration. We show a statement analogous to that of Section~\ref{modeli1}, starting with an auxiliary result. 

\begin{lm} \label{shtesa}
For any two points $\eta_1, \eta_2 \in T$, one has $I_{\eta_1} \cap I_{\eta_2}=\emptyset$.
\end{lm}

\begin{proof} Let $z_i \in Z$ be such that $\{\eta_i\}=\partial{N_{z_i}}, i=1,2.$
Assume, by contradiction, that there exists $P \in I_{\eta_1} \cap I_{\eta_2}$. As $\mathscr{C}_2$ is an sncd model, $P$ is an ordinary double point, so it is not contained in a third irreducible component of $\mathscr{C}_{2,s}$. Then $U:=\pi_2^{-1}(P)$ is a connected component of $C \backslash S$ and $\partial{U}=\{\eta_1, \eta_2\}.$ Thus, there exists an \emph{open} injective path $\gamma:=(\eta_1, \eta_2)$ contained in $U$ connecting $\eta_1$ and $\eta_2$. The only branch issued from~$\eta_i$ not contained in $N_{z_i}$ is thus contained in $\gamma$, $i=1,2.$   As $s_0 \in S \backslash \bigcup_{z \in Z} \partial{N_z}$, we have $I_{\eta_1} \cup I_{\eta_2} \neq \mathscr{C}_{s,2}.$
So, since $\mathscr{C}_{s,2}$ is connected, there exists a third irreducible component $I_s, s\in S$, intersecting at least one of $I_{\eta_1}$ or $I_{\eta_2}$, and $I_s \cap I_{\eta_1} \cap I_{\eta_2}=\emptyset$. Without loss of generality, let $I_s \cap I_{\eta_1}=:\{Q\}.$  Similarly, $\pi_2^{-1}(Q)=:V$ is a connected component of $C \backslash S$ and $\partial{V}=\{s, \eta_1\}$. Remark that $P \neq Q$ and thus $U \cap V=\emptyset.$ As before, there exists an injective open path $\gamma':=(s, \eta_1)$ contained in $V$, so the unique branch issued from $\eta_1$ outside of $N_{z_1}$ is contained in $\gamma'$. But then $\gamma \cap \gamma' \neq \emptyset$, implying $U \cap V \neq \emptyset$, contradiction.  
\end{proof}

\begin{center}
\definecolor{zzttff}{rgb}{0.6,0.2,1}
\definecolor{ffvvqq}{rgb}{1,0.3333333333333333,0}
\definecolor{qqqqff}{rgb}{0,0,1}
\begin{tikzpicture}[line cap=round,line join=round,>=triangle 45,x=1cm,y=1cm, scale=0.8]
\draw [line width=0.4pt] (-0.48,4.72)-- (-0.5,1.14);
\draw [line width=0.4pt] (-0.48670432557268584,3.519925722489233)-- (1.02,4.66);
\draw [line width=0.4pt] (0.3916767533585498,4.18456818547614)-- (1.3,3.86);
\draw [line width=0.4pt] (-0.4939666687472692,2.219966294238812)-- (-1.24,1.06);
\draw [shift={(-0.48801748977686904,1.172463631037081)},line width=0.4pt,color=qqqqff]  plot[domain=1.5396519666041433:4.782244576907428,variable=\t]({0.0029446125003494243*1.635448426077079*cos(\t r)+0.9999956646192136*0.8724581894977228*sin(\t r)},{-0.9999956646192136*1.635448426077079*cos(\t r)+0.0029446125003494243*0.8724581894977228*sin(\t r)});
\draw [line width=0.4pt] (-0.4952515111644949,1.9899795015554131)-- (0.16716501042297213,1.3018540097524218);
\draw [line width=0.4pt] (-0.32490499689771574,1.8130216886285284)-- (0.1958362125084269,1.8007329260393339);
\draw [line width=0.4pt] (-0.48392250820781246,4.01787103080157)-- (-1.245799103612277,4.607850161634012);
\draw [line width=0.4pt] (2.82398304652662,4.517854978952808)-- (4.293904363652954,2.517962030481606);
\draw [line width=0.4pt] (4.343901687364734,3.387915463066579)-- (2.933977158692536,1.2480300082023923);
\draw (1.5140531652779827,3.0779320560535424) node[anchor=north west] {$\xrightarrow{\pi_2}$};
\draw (-1.384081533933114,0.8780498127352199) node[anchor=north west] {Figure 4. ${\color{orange}Z, \pi_2(Z)}, {\color{blue}{N_z}}, z \in Z, {\color{violet}{S}}$};
\begin{scriptsize}
\draw [fill=ffvvqq] (-0.4999606679691679,1.1470404335189528) circle (2.5pt);
\draw [fill=zzttff] (-0.49068208754217607,2.807906329950484) circle (2.5pt);
\draw (-0.29584406525437183,2.9229349999705843) node {$\eta$};
\draw [fill=zzttff] (-0.4867043255726858,3.5199257224892326) circle (2.5pt);
\draw (-0.23584406525437183,3.5128969966778613) node {$s_0$};
\draw[color=black] (3.463948790037405,3.942907166573097) node {$I_{s_0}$};
\draw[color=black] (3.5439445079762533,2.5229617628527836) node {$I_{\eta}$};
\draw [fill=ffvvqq] (3.1842244329397116,1.6278379279817945) circle (2.5pt);
\end{scriptsize}
\end{tikzpicture}
\end{center}

\begin{thm} \label{2.13}
If there exists a proper model $\mathcal{X} \rightarrow \mathscr{C}_2$ of $X/F$ which is smooth over $\bigcup_{s \in S \backslash T} I_s$, then $X(\mathscr{M}_x) \neq \emptyset$ for all $x \in C$, and, equivalently, $X(F_v) \neq \emptyset$ for all ${v \in V(F)}.$  
Furthermore, if there exists a rational linear algebraic group $G/F$ acting strongly transitively on $X$, then $X(F) \neq \emptyset$. 
\end{thm}

\begin{proof} Recall Remark \ref{2.6}. 
 Let $x \in C$. If $x \in \bigcup_{z \in Z}N_z \cup S$, then $X(\mathscr{M}_x) \neq \emptyset$. Suppose $x \not \in \bigcup_{z \in Z}N_z \cup S.$ Set $U:=C \backslash (\bigcup_{z \in Z} N_z \cup S).$  Let $U_x$ be the connected component of $C \backslash S$ containing $x$. By Theorem \ref{lordan}, $\pi_2(U_x)=:\{P_x\}$, where $P_x$ is a closed point of~$\mathscr{C}_{2,s}$, and ${\pi_2^{-1}(P_x)=U_x}$. Let us show that $P_x \in \bigcup_{s \in S\backslash T} I_s$. 

By \emph{loc.cit.}, $\partial{U_x} \subseteq S$, so depending on whether $P_x$ is a double point or not, $\partial{U_x}$ consists either of two points or one.  
If $P_x$ is a double point, there is nothing to check, as by Lemma \ref{shtesa}, the set $\bigcup_{s \in S \backslash T} I_s$ contains all the double points of $\mathscr{C}_{2,s}$.
Suppose that $\partial{U_x}$ is a singleton. Remark that $P_x \in \bigcup_{i \in T} I_i$ if and only if $\partial{U_x} \in T$ (\emph{cf.} Theorem \ref{lordan}). Assume, by contradiction, this is the case. Let $z_0 \in Z$ be such that $\partial{U_x}=\partial{N_{z_0}}=:\{\eta\}.$  There are three possibilities:

(1) $N_{z_0} \cup U_x=C,$ in which case $S \backslash \{\eta\} \subseteq N_{z_0},$ meaning $s_0 \in N_{z_0},$ contradiction; 

(2) $U_x \subseteq N_{z_0},$ in which case $x \in N_{z_0}$, contradiction;

(3) $N_{z_0} \cup U_x\neq C$ and $U_x \not \subseteq N_{z_0};$ let $a \in C \backslash (N_{z_0} \cup U_x);$ let $[a, \eta)$ be an injective path in~$C$ connecting $a$ and $\eta$ (but without containing~$\eta$); it is entirely contained in $C \backslash (N_{z_0} \cup U_x)$.
 As $a \not \in N_{z_0}$, the path $[a,\eta)$ contains the unique branch~$b$ issued from~$\eta$ that is outside of~$N_{z_0}.$ Let us show that $b \subseteq U_x.$  Let $c \in U_x \backslash N_{z_0}$. There exists an injective path $[c, \eta)$ in $U_x$ connecting $c$ and~$\eta$, without containing $\eta$. This path intersects a branch issued from $\eta$ and is disjoint from $N_{z_0}$, so $b \subseteq [c, \eta)$, meaning $[c, \eta) \cap [a, \eta) \neq \emptyset,$ contradicting $[a, \eta) \cap U_x=\emptyset.$ 

To resume, we have shown that $\pi_2(x)=\pi_2(U_x)=P_x \in \bigcup_{s \in S \backslash T} I_s$. This means that for any $x \in U$, $X$ has a proper smooth model over $\mathcal{O}_{\mathscr{C}_2, P_x},$ hence by Lemma \ref{2.1}, $X(\mathscr{M}_x) \neq \emptyset$. We can now conclude by relation \eqref{artin} and \cite[Cor.~3.18]{une}.
\end{proof}

\begin{rem}\label{2.14}
Let us briefly explain why, in order to apply the techniques of this manuscript, the point $s_0$ in Construction \ref{model2} is necessary. Let us look at the example illustrated in Figure 4. If $S:=\{\eta\}=T$ instead of $\{s_0, \eta\}$, then $\mathscr{C}_{1,s}$ contains a unique irreducible component $I_{\eta}$, making the hypothesis of Theorem \ref{2.13} empty. If we took $s_0 \in N_{z}$, then the hypothesis ensures good reduction of $X$ over~$I_{s_0}$, but the points in $C \backslash N_z$ all map to $I_{\eta} \backslash I_{s_0}$ via $\pi_2$, meaning we can't apply this technique to them and they remain unaccounted~for. 
\end{rem}

\begin{rem} \label{2.17}
If $T=\emptyset,$ meaning $Z=\emptyset$, then the variety $X/F$ has good reduction over all points of the curve $C^{\mathrm{al}}$. In that case, in Theorems \ref{2.5} and~\ref{2.13}, we have to check that~$X$ has a proper smooth model over the entire special fiber $\mathscr{C}_{2,s}$ of the model~$\mathscr{C}_2$ of~$C^{\mathrm{al}}$. Hence, this condition is directly related to the uniformizer~$t$ of~$k^{\circ}.$ More precisely, we check smoothness of $X$ via the nonvanishing of certain minors~$\epsilon$ of a matrix defined over ~$F$. As~$\epsilon$ doesn't vanish anywhere on $C^{\mathrm{al}}$, it is constant, meaning defined over $k$. Then, checking whether $X$ has a model that is smooth over $\mathscr{C}_{2,s}$ comes down to checking whether $\epsilon$ is invertible in $k^{\circ}$, or equivalently, whether it is non-zero on the residue field $\widetilde{k}$ of $k$.
\end{rem}

\section{A smoothness criterion over residue fields of completions} \label{seksioni3}

We denote by $\mathcal{H}(\cdot)^{\circ}$ the valuation ring, and by $\widetilde{\mathcal{H}(\cdot)}$ the residue field of the completed residue field~$\mathcal{H}(\cdot)$. 

\subsection{Over the analytic curve} \label{ana} 
We remark that for an open $U$ of $C$ and a point $x \in U$, there is a map $\mathcal{O}^{\circ}(U) \rightarrow \widetilde{\mathcal{H}(x)}$ induced by the natural map $\mathcal{O}^{\circ}(U) \rightarrow \mathcal{H}(x)^{\circ}$. As before, we start by proving local statements.

\begin{prop} \label{3.1}
Let $x \in C$. Suppose there exists a strict affinoid neighborhood $U$ of $x$ in $C$ such that $X$ has a proper model $\mathcal{X} \rightarrow \mathrm{Spec} \ \mathcal{O}^{\circ}(U)$ which is smooth over $\widetilde{\mathcal{H}(y)}$ for all but a finite number of rigid and type 2 points $y$ of $U$.
 Then there exists a neighborhood $U_x \subseteq U$ of $x$ such that $X(\mathscr{M}(U_x)) \neq \emptyset$, and hence $X(\mathscr{M}_x) \neq \emptyset.$
\end{prop}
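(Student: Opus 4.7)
The strategy is to reduce to an application of Theorem~\ref{1.5}, which requires a connected open neighborhood $T_x$ of $x$ with boundary of type 2 or 3 over which $X$ admits a proper \emph{smooth} scheme model. First, an easy case: if $x$ is rigid or of type 2, then by Remark~\ref{1.2}(4) the valuation $\emph{val}(x)$ is discrete, so Hypothesis~\ref{1.1.1} together with \cite[Corollary~3.17]{une} directly gives $X(\mathscr{M}_x) \neq \emptyset$, and Remark~\ref{1.4.1} furnishes a strict affinoid $U_x \subseteq N$ of $x$ with $X(\mathscr{M}(U_x)) \neq \emptyset$, independently of the smoothness hypothesis. We may therefore assume from now on that $x$ is neither rigid nor of type 2.

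Let $B = \{y_1, \ldots, y_m\} \subseteq N$ be the finite exceptional set of the statement (rigid in Case~(1), type 2 in Case~(2)). Since $x \notin B$, we choose a strict affinoid neighborhood $V \subseteq N$ of $x$ disjoint from $B$; then $\partial V$ is a finite set of type 2 points, and the restricted proper model $\mathcal{X}_V := \mathcal{X} \times_{\mathrm{Spec}\,\mathcal{O}^\circ(N)} \mathrm{Spec}\,\mathcal{O}^\circ(V)$ is smooth over $\widetilde{\mathcal{H}(y)}$ at \emph{every} point $y \in V$ of the relevant type.

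The heart of the proof is to upgrade this fiberwise smoothness to smoothness of the scheme morphism $\mathcal{X}_V \to \mathrm{Spec}\,\mathcal{O}^\circ(V)$. Let $W \subseteq \mathrm{Spec}\,\mathcal{O}^\circ(V)$ be the image of the (closed) non-smooth locus of $\mathcal{X}_V$; by properness $W$ is closed. Combining Berkovich's reduction map (Remark~\ref{berred}) with Bosch's theorem (Theorem~\ref{bosh}) identifies the relevant prime ideals of $\mathcal{O}^\circ(V)$: in Case~(1), the kernels $\ker(\mathcal{O}^\circ(V) \to \widetilde{\mathcal{H}(y)})$ for $y \in V$ rigid are precisely the maximal ideals of $\mathcal{O}^\circ(V)$ lying in the special fiber, and smoothness at all such fibers plus smoothness of the generic fiber $X/F$ forces, through a Noetherian/Jacobson-style argument, the closed set $W$ to be empty. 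In Case~(2), the corresponding kernels for $y \in V$ of type 2 include the generic points of all irreducible components of the special fiber, and, after refining $V$ by a suitable admissible formal blow-up so that each component of the special fiber of the refined model is attached to a type 2 point of $V$, the same argument applies and again yields $W = \emptyset$. This fiberwise-to-global upgrade, together with pinning down which primes of $\mathcal{O}^\circ(V)$ the rigid and type 2 points correspond to via reduction, is the main technical obstacle.

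Once smoothness of $\mathcal{X}_V$ over $\mathcal{O}^\circ(V)$ is established, take $T_x$ to be the connected component of $\mathrm{Int}\,V$ containing $x$. Its boundary in $C$ is contained in $\partial V$ and hence consists only of type 2 points, and base change of $\mathcal{X}_V$ along the restriction $\mathcal{O}^\circ(V) \to \mathcal{O}^\circ(T_x)$ provides a proper smooth model of $X$ over $\mathcal{O}^\circ(T_x)$. Theorem~\ref{1.5} then furnishes a neighborhood $U_x \subseteq T_x \subseteq N$ of $x$ with $X(\mathscr{M}(U_x)) \neq \emptyset$, and hence in particular $X(\mathscr{M}_x) \neq \emptyset$, completing the proof.
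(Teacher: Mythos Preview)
Your proposal takes a harder road than necessary, and the key step is not adequately justified. The heart of your argument is the claim that fiberwise smoothness of $\mathcal{X}_V$ over $\widetilde{\mathcal{H}(y)}$ for all rigid (resp.\ type 2) $y \in V$ upgrades to smoothness of the whole morphism $\mathcal{X}_V \to \mathrm{Spec}\,\mathcal{O}^\circ(V)$ via a ``Noetherian/Jacobson-style argument''. But for a general strict affinoid $V$, the ring $\mathcal{O}^\circ(V)$ is not local, and you do not establish that it is Jacobson, nor that its closed points are exactly the kernels $\ker(\mathcal{O}^\circ(V)\to\widetilde{\mathcal{H}(y)})$ for $y$ rigid. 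Without both facts, a closed non-smooth image $W$ avoiding those particular primes need not be empty. In Case~(2) the situation is worse: you invoke an unspecified ``admissible formal blow-up'' to arrange that every irreducible component of the special fiber is hit by a type 2 point of $V$, with no indication of how this interacts with the model $\mathcal{X}_V$ or why it suffices.

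The paper's proof bypasses all of this by working over a \emph{local} ring from the outset. After shrinking $N$ so that no exceptional $y$ remains, it chooses a proper regular model $\mathscr{C}$ of $C^{\mathrm{al}}$ whose vertex set contains $\partial N$, sets $P_x:=\pi(x)$ and $U_x:=\pi^{-1}(P_x)$, and uses Bosch's theorem to identify $\mathcal{O}^\circ(U_x)=\widehat{\mathcal{O}_{\mathscr{C},P_x}}$, a complete regular local ring with residue field $\kappa(P_x)$. The decisive observation, which you do not exploit, is that for \emph{any single} rigid or type 2 point $\eta\in U_x$, the specialization map induces an embedding $\kappa(P_x)\hookrightarrow\widetilde{\mathcal{H}(\eta)}$ (Remark~\ref{berred} and \cite[2.4]{Ber90}). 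Since smoothness descends along field extensions, smoothness of $\mathcal{X}$ over one $\widetilde{\mathcal{H}(\eta)}$ already gives smoothness over $\kappa(P_x)$; and for a proper morphism over a local base, smoothness of the closed fiber forces smoothness everywhere. One then concludes via Lemma~\ref{1.5.2} directly, without Theorem~\ref{1.5}. In short, your global Jacobson maneuver over $\mathcal{O}^\circ(V)$ is replaced by a single field embedding over the local ring $\mathcal{O}^\circ(U_x)$, and the hypothesis is in fact only needed at one point of $U_x$ (cf.\ Remark~\ref{3.2}).
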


\begin{proof} As already remarked, it makes sense to consider the model $\mathcal{X}$ of $X$ (by applying a base change) over $\widetilde{\mathcal{H}(y)}, y\in U.$
By restricting to a smaller neighborhood of $x$ if necessary, as $C$ is separated, we may assume that $\mathcal{X}$ is smooth over $\widetilde{\mathcal{H}(y)}$ for \emph{all} rigid and type 2 points $y$ of $U$. Let $\mathscr{C}$ be a proper regular model of $C^{\mathrm{al}}$ over $k^{\circ}$ corresponding to a vertex set $S$ of $C$ such that $\partial{U} \subseteq S$ (see Corollary \ref{lordan1}). Let $\mathscr{C}_s$ denote its special fiber and $\pi$ the specialization morphism $C \rightarrow \mathscr{C}_s.$

Set $P_x:=\pi(x)$ and $U_x:=\pi^{-1}(P_x).$ As $\partial{U_x} \subseteq S$, so $\partial{U_x}$ contains only type 2 points. 
By the proof of Lemma \ref{ekstra}, $U_x \subseteq U$, and so for any rigid or type 2 point $\eta \in U_x$, the model $\mathcal{X}$ of $X$  is smooth over $\widetilde{\mathcal{H}(\eta)}.$ From \cite[2.4, pg. 35]{Ber90} (see also Remark~\ref{berred}), the specialization map $\pi$ induces an embedding $\kappa(P_x) \hookrightarrow \widetilde{\mathcal{H}(\eta)}$, so $\mathcal{X}$ is smooth over $\kappa(P_x)$, and thus over $\widehat{\mathcal{O}_{\mathscr{C}, P_x}}=\mathcal{O}^{\circ}(U_x).$ Hence, $X(\mathscr{M}(U_x)) \neq \emptyset$ (\emph{cf.} Lemma \ref{1.5.2}), and  $X(\mathscr{M}_x) \neq \emptyset.$
\end{proof}

\begin{rem} \label{3.2}
As we can see from the proof, the hypotheses of Proposition \ref{3.1} can be relaxed to: \emph{for any neighborhood $M$ of $x$ in $C$ there exists a rigid or type 2 point $y_M \in M$ such that $\mathcal{X}$ is smooth over $\widetilde{\mathcal{H}(y_M)}$}. 
\end{rem}

Let us now give a similar global version of Proposition \ref{3.1} akin to Corollary \ref{1.7}. 

\begin{cor} \label{3.3}
Let $Q$ be a finite set of rigid and type 2 points of $C$. For any $z \in Q$, let $V_z$ be a strict affinoid neighborhood of $z$ in $C$ such that $X(\mathscr{M}(V_z)) \neq \emptyset.$ Set $U:=C \backslash (\bigcup_{z \in Q} V_z  \cup A),$ where $A$ is a finite set of type 2 points. If $X$ has a proper model $\mathcal{X} \rightarrow \mathrm{Spec} \ \mathcal{O}^{\circ}(U)$ such that  $\mathcal{X}$ is smooth over $\widetilde{\mathcal{H}(y)}$ for all but a finite number of \emph{rigid} and \emph{type 2} points $y \in U,$
then $X(\mathscr{M}_x) \neq \emptyset$ for all $x \in C$. Equivalently, $X(F_v) \neq \emptyset$ for all $v \in V(F).$

If, moreover, there exists a rational linear algebraic group $G/F$ acting strongly transitively on $X$, then $X(F) \neq \emptyset$. 
\end{cor}

\begin{proof}
By Proposition \ref{3.1}, for all ${x \in U}$, ${X(\mathscr{M}_x) \neq \emptyset}$. By construction, for any $x \in \bigcup_{z \in Q} V_z \cup A$, ${X(\mathscr{M}_x) \neq \emptyset}$. Thus, $X(\mathscr{M}_x) \neq \emptyset$ for all $x \in C,$ so we can conclude by relation \eqref{artin} and \cite[Cor.~3.18]{une}.
\end{proof}

It suffices to show that for a \emph{finite} number of open subsets $U \subseteq C$, there exists $x_U \in U$ a rigid or type 2 point, such that~$X$ has a  proper model over $\mathcal{O}^{\circ}(U)$ smooth over~$\widetilde{\mathcal{H}(x_U)}$.

\begin{rem} \label{3.3.2}
With the notation of Corollary \ref{3.3}, let $S$ be any vertex set of~$C$ corresponding to a proper regular model $\mathscr{C}$ of $C^{\mathrm{al}}$ and such that $\bigcup_{z \in Q} \partial{V_z} \subseteq S$. By assumption, for any $s \in S$, $X(\mathscr{M}_s) \neq \emptyset$. Hence, there exists an open neighborhood~$U_s$ of $s$ in $C$ such that $X(\mathscr{M}(U_s)) \neq \emptyset.$ By Remark \ref{1.11}, there exist only finitely many connected components $A_1, A_2, \dots, A_n$ of $C \backslash S$ which aren't entirely contained in~${\bigcup_{s \in S} U_s}$; see also~Fig.~2. 

\begin{prop} \label{3.3.3}
If for any $i \in \{1, 2, \dots, n\}$ there exists a rigid or type 2 point $x_i \in A_i$ such that $X$ has a proper model $\mathcal{X} \rightarrow \mathrm{Spec} \ \mathcal{O}^{\circ}(A_i)$ which is smooth over $\widetilde{\mathcal{H}(x_i)}$, then $X(\mathscr{M}_x) \neq \emptyset$ for all $x \in C$. Equivalently, $X(F_v) \neq \emptyset$ for all $v \in V(F)$.
If, moreover, there exists a rational linear algebraic group $G/F$ acting strongly transitively on~$X$, then~${X(F) \neq \emptyset}$. 
\end{prop}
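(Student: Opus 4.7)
The plan is to follow the strategy of the proof of Proposition \ref{1.12}, replacing the appeal to Theorem \ref{1.5} by a one-point descent-of-smoothness argument in the spirit of Proposition \ref{3.1}. For $x \in \bigcup_{s \in S} U_s \cup \bigcup_{z \in Q} V_z$, the construction of the $U_s$ and $V_z$ yields $X(\mathscr{M}(U_s)) \neq \emptyset$ and $X(\mathscr{M}(V_z)) \neq \emptyset$, so Lemma \ref{1.3bis} (together with the openness of $U_s$) gives $X(\mathscr{M}_x) \neq \emptyset$. The work lies in handling $x$ in one of the exceptional components $A_i$.

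Suppose then $x \in A_i$. Because $\mathscr{C}$ is a proper regular model corresponding to the vertex set $S$ (Remark \ref{3.3.2}), Theorem \ref{lordan} identifies $A_i = \pi^{-1}(P_i)$ for a unique closed point $P_i \in \mathscr{C}_s$, and Theorem \ref{bosh} identifies $\mathcal{O}^\circ(A_i) = \widehat{\mathcal{O}_{\mathscr{C}, P_i}}$, a complete regular local ring of dimension $2$; since $\partial A_i \subseteq S$ consists of type~$2$ points, Remark \ref{1.5.1}(2) gives $F \subseteq \mathrm{Frac}\,\mathcal{O}^\circ(A_i)$. The key observation is that, as $x_i \in A_i$, the specialization satisfies $\pi(x_i) = P_i$, so Berkovich's result (Remark \ref{berred}) produces an embedding $\kappa(P_i) \hookrightarrow \widetilde{\mathcal{H}(x_i)}$ through which the reduction $\mathcal{O}^\circ(A_i) \to \widetilde{\mathcal{H}(x_i)}$ factors. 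Consequently, $\mathcal{X} \otimes_{\mathcal{O}^\circ(A_i)} \widetilde{\mathcal{H}(x_i)}$ is the further base change of the closed fiber $\mathcal{X} \otimes_{\mathcal{O}^\circ(A_i)} \kappa(P_i)$ along the faithfully flat field extension $\widetilde{\mathcal{H}(x_i)}/\kappa(P_i)$; its smoothness over $\widetilde{\mathcal{H}(x_i)}$ therefore descends to smoothness of the closed fiber over $\kappa(P_i)$.

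From here I would run the argument of Lemma \ref{1.5.2} with $\mathcal{O}^\circ(A_i)$ in place of $\mathcal{O}^\circ(U_x)$: choose a regular system of parameters $(\alpha, \beta)$, apply the valuative criterion of properness to the DVR $\widehat{\mathcal{O}^\circ(A_i)_{(\alpha)}}$ (whose completed fraction field lies in $V(F)$ by Remark \ref{1.6bis}), reduce modulo $(\alpha)$ and apply the valuative criterion once more to the DVR $\mathcal{O}^\circ(A_i)/(\alpha)$, and finally invoke Hensel lifting for the Henselian couple $(\mathcal{O}^\circ(A_i), (\alpha))$. The smoothness of the closed fiber established above supplies the smoothness needed at the image of the section, so the Hensel step yields $\mathcal{X}(\mathcal{O}^\circ(A_i)) \neq \emptyset$ and hence $X(\mathscr{M}_x) \neq \emptyset$. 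Combining both cases, $X(\mathscr{M}_x) \neq \emptyset$ for all $x \in C$; by Remark \ref{xval}, $X(F_v) \neq \emptyset$ for every $v \in V(F)$, and the strong transitivity statement then follows from \cite[Corollary 3.18]{une}. I expect the descent-of-smoothness step to be the delicate part: the apparently very weak hypothesis of smoothness at a single rigid or type~$2$ point $x_i \in A_i$ suffices precisely because the equality $\pi(x_i) = P_i$, forced by $x_i \in A_i$, converts smoothness at $x_i$ into smoothness of the whole closed fiber of $\mathcal{X} \to \mathrm{Spec}\,\mathcal{O}^\circ(A_i)$, which is exactly the ingredient required by the two-step valuative criterion plus Hensel lifting machinery.
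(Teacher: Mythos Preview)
Your proposal is correct and follows essentially the same route as the paper: split off the easy case $x \in \bigcup_s U_s \cup \bigcup_z V_z$, then for $x \in A_i$ use that $\pi(x_i)=P_i$ and the embedding $\kappa(P_i)\hookrightarrow \widetilde{\mathcal{H}(x_i)}$ (Remark \ref{berred}) to descend smoothness to the closed fiber, and conclude via Lemma \ref{1.5.2}, Remark \ref{xval}, and \cite[Corollary 3.18]{une}. The paper compresses the descent step into the single sentence ``$\mathcal{X}$ is smooth over $\kappa(P_{i_0})$, meaning $\mathcal{X}$ is smooth over $\mathcal{O}^{\circ}(A_{i_0})$'' and then invokes Lemma \ref{1.5.2} directly, whereas you spell out the faithfully flat descent and the two-step valuative-criterion-plus-Hensel argument; these are the same proof at different levels of detail.
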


\begin{proof}
If $x \in \bigcup_{s \in S} U_s \cup \bigcup_{z \in Q} V_z$, then $X(\mathscr{M}_x) \neq \emptyset$. Otherwise, suppose there exists $i_0 \in \{1,2,\dots, n\}$ such that $x \in A_{i_0}$. By Theorem \ref{lordan}, $\pi(A_{i_0})=\pi(x)=\pi(x_{i_0})=:P_{i_0} \in \mathscr{C}_s$, where $\mathscr{C}_s$ is the special fiber of $\mathscr{C}$ and $\pi$ the corresponding specialization morphism.
From \cite[2.4, pg. 35]{Ber90} (see also Remark \ref{berred}), $\pi$ induces an embedding $\kappa(P_{i_0}) \subseteq \widetilde{\mathcal{H}(x_{i_0})}$, where $\kappa(P_{i_0})$ is the residue field of $P_{i_0}$ and of the local ring $\mathcal{O}^{\circ}(A_{i_0})=\widehat{\mathcal{O}_{\mathscr{C}, P_{i_0}}}$. Consequently,~$\mathcal{X}$ (or rather, its respective base change) is smooth over~$\kappa(P_{i_0})$, meaning $\mathcal{X}$ is smooth over~$\mathcal{O}^{\circ}(A_{i_0})$. By Lemma \ref{1.5.2}, this implies that ${\mathcal{X}(\mathcal{O}^{\circ}(A_{i_0})) \neq \emptyset}$, hence that $X(\mathscr{M}_x) \neq \emptyset$. We can now conclude by relation \eqref{artin} and \cite[Cor.~3.18]{une}.
\end{proof}

\end{rem}

\subsection{Over a model of the algebraic curve}
We translate the results of Section \ref{ana} over models of $C^{\mathrm{al}}$ and using valuations.

\begin{rem} \label{3.4} 
(1) Let $v$ be a discrete valuation on $F$ such that $v_{|k}$ is trivial. The residue field $\kappa(v)=\widetilde{F_v}$ is a finite field extension of $k$.  In particular, it is uniquely endowed with a discrete valuation extending that of $k$. As a consequence, it makes sense to look at its residue field, which we will call the \emph{double residue field} of $F_v$ and denote by $\widetilde{\widetilde{F_v}}.$

(2) 
Let $v$ be a discrete valuation on $F$. Let $x_v \in C$ be the unique point corresponding to the valuation $v$. By the proof of \cite[Prop.~3.15]{une}, if $v$ extends the norm on $k$, then $F_v =\mathcal{H}(x_v)$, so $\widetilde{F_v}=\widetilde{\mathcal{H}(x_v)}$. If, on the other hand, $v$ is trivial on $k$, then by \emph{loc.cit.}, $\widetilde{F_v}=\mathcal{H}(x_v)$, so $\widetilde{\widetilde{F_v}}=\widetilde{\mathcal{H}(x_v)}.$
\end{rem}
For the next statement, we work with the models $\mathscr{C}_1$ and $\mathscr{C}_2$ of $C^{\mathrm{al}}$ from  Subsections~\ref{modeli1} and \ref{modeli2}, respectively. 

\begin{thm} \label{3.6}
Let $\mathcal{X} \rightarrow \mathscr{C}_1$ be a proper model of $X$. Assume that for all \emph{but a finite number} of discrete $v \in V(F)$:
\begin{enumerate}
\item if $v_{|k}$ is trivial with center $c_v$ satisfying $\pi_1(c_v) \in \mathscr{C}_1 \backslash \pi_1(Z)$, then $\mathcal{X} \rightarrow \widetilde{\widetilde{F_v}}$ is smooth, 
\item if $v_{|k}$ induces the norm on $k$ and with center $c_v$ satisfying $c_v \in \mathscr{C}_{1,s} \backslash \pi_1(Z),$ then $\mathcal{X} \rightarrow \widetilde{F_v}$ is smooth.
\end{enumerate} 
Then, $X(F_v) \neq \emptyset$ for all $v \in V(F)$.
Furthermore, if there exists a rational linear algebraic group $G/F$ acting strongly transitively on $X$, then $X(F) \neq \emptyset$. 
\end{thm}

\begin{rem} \label{3.7}
If $v$ is a discrete valuation on $F$ which is trivial on $k$, \emph{i.e.} induced by a rigid point $x_v$ of $C$, then $c_v=x_v$ (\emph{cf.} \cite[Remark 8.3.19]{liulibri}). 
If $v$ is a discrete valuation on~$F$ which extends the norm of $k$, then by \emph{loc.cit.}, $c_v=\pi_1(x_v)$ (see also Lemma \ref{2.2}). 
\end{rem}

\begin{proof}[Proof of Theorem \ref{3.6}] For any point $P \in \mathscr{C}_{1,s}$,
the morphism $\mathcal{X} \rightarrow \mathscr{C}_1$ induces, via base change, a morphism $\mathcal{X} \rightarrow \mathrm{Spec} \ \mathcal{O}_{\mathscr{C}_1, P}$, hence one $\mathcal{X} \rightarrow \mathrm{Spec} \ \kappa(P)$. For any $x \in C$, from \cite[2.4, pg. 35]{Ber90} (see also Remark \ref{berred}), the specialization morphism~$\pi_1$ induces an embedding $\kappa(P_x) \subseteq \widetilde{\mathcal{H}(x)}$, where $P_x:=\pi_1(x).$ As a consequence, $\mathcal{X}$ gives rise to a model over $\widetilde{\mathcal{H}(x)}$. By Remark \ref{3.4}, this implies the existence of such a model over all $\widetilde{\widetilde{F_v}}$ (resp. $\widetilde{F_v}$) in the statement. By \eqref{artin}, it suffices to show that $X(\mathscr{M}_x) \neq \emptyset$ for all $x \in C$. This is true for any $x \in \bigcup_{z \in Z} V_z \cup S$ by construction. Let ${x \in C \backslash (\bigcup_{z \in Z} V_z \cup S)}$. Set $P_x:=\pi_1(x) \in \mathscr{C}_{1,s}$, and $U_x:=\pi_1^{-1}(P_x).$ By Theorem~\ref{lordan}, $U_x$ is a connected open subset of $C$ and $\partial{U_x} \subseteq S$. 
By the proof of Theorem \ref{2.5}, $P_x \not \in \pi_1(Z),$ so for all but a finite number of rigid and type 2 points $y \in U_x$, the model $\mathcal{X}$ is smooth over~$\widetilde{\mathcal{H}(y)}$. Hence, by Proposition~\ref{3.1}, $X(\mathscr{M}_x) \neq \emptyset.$ We can conclude by relation \eqref{artin} and \cite[Cor.~3.18]{une}. 
\end{proof}

\begin{rem}
The proof of Theorem \ref{3.6} works \emph{mutatis mutandis} if we take the model~$\mathscr{C}_2$ instead of $\mathscr{C}_1$, and replace $\mathscr{C}_{1,s} \backslash \pi_1(Z)$ with $\bigcup_{s \in S\backslash T} I_s$ (see Construction \ref{model2}).
\end{rem}

\section{The case of quadratic forms} \label{quad}
We use the techniques of Section \ref{section1} to show a Hasse principle for quadratic forms in non-dyadic residue characteristic. This is a result originally shown in \cite[Theorem~3.1]{ctps}, where the authors also use local rings of dimension $2$ induced by points on special fibers of models of curves. 

We start by proving a few (standard) results on sncd models of curves. For an sncd model $\mathcal{C}$ of $C^{\mathrm{al}}$, let $\mathcal{I}_{\mathcal{C}_s}$ the invertible ideal sheaf of $\mathcal{O}_{\mathcal{C}}$ defining the special fiber~$\mathcal{C}_s$. For $a \in F^{\times}$, let $[a]$ be the Weil divisor of $a$ in $C^{\mathrm{al}}$, and $\overline{[a]}$ its Zariski closure in $\mathcal{C}.$ Let $\mathrm{div}(a)$ be the Weil divisor of $a$ in $\mathcal{C}$. We remark that $\mathrm{div}(a) \cap C^{\mathrm{al}}=[a]$.

\begin{lm} \label{4.2}
Let $P \in \mathcal{C}_s$ be a closed point. Let $a \in F^{\times}$ such that $P \not \in \overline{[a]}.$
If~${a \in \mathcal{O}_{\mathcal{C},P}},$ then either $a \in \mathcal{I}_{\mathcal{C}_s,P}$ or~$a \in \mathcal{O}_{\mathcal{C}, P}^{\times}.$ 
\end{lm}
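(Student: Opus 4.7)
My plan is to exploit unique factorization in the regular local ring $\mathcal{O}_{\mathcal{C},P}$ together with the explicit form of $\mathcal{I}_{\mathcal{C}_s,P}$ furnished by the sncd hypothesis.

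First I would observe that since $\mathcal{C}$ is an sncd model of $C$ over $k^{\circ}$ and $P\in\mathcal{C}_s$ is a closed point, the ring $R:=\mathcal{O}_{\mathcal{C},P}$ is a two-dimensional regular local ring, hence a UFD. Applying Remark~\ref{4.1}(2) to the sncd divisor $\mathcal{C}_s$, one obtains a regular system of parameters $f_1,f_2\in\mathfrak{m}_P$ and some $r\in\{1,2\}$ (the number of irreducible components of $\mathcal{C}_s$ passing through $P$) such that $\mathcal{I}_{\mathcal{C}_s,P}=(f_1\cdots f_r)R$; for each $i\leqslant r$, $f_i$ is a prime element of $R$ cutting out the germ at $P$ of an irreducible component $D_i\subset\mathcal{C}_s$.

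Next I would analyze the prime factorization of $a$ in $R$. Each height-one prime of $R$ corresponds to the germ at $P$ of a prime Weil divisor of $\mathcal{C}$ through $P$, which is either \emph{vertical} (one of the $D_i$, generated by $f_i$) or \emph{horizontal} (the germ at $P$ of the Zariski closure of a closed point of $C$ specializing to $P$). Writing $a=u\prod_{\mathfrak{p}}\pi_{\mathfrak{p}}^{n_{\mathfrak{p}}}$ with $u\in R^{\times}$ and $n_{\mathfrak{p}}\in\mathbb{Z}$, the assumption $a\in R$ forces $n_{\mathfrak{p}}\geqslant 0$ for every $\mathfrak{p}$. Moreover, a horizontal prime with $n_{\mathfrak{p}}>0$ would exhibit a closed point of $C$ in $\mathrm{supp}[a]$ whose closure in $\mathcal{C}$ contains $P$, contradicting $P\notin\overline{[a]}$. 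Hence only the $f_i$ can occur, yielding $a=u\cdot f_1^{n_1}\cdots f_r^{n_r}$ with $u\in R^{\times}$ and $n_1,\ldots,n_r\geqslant 0$.

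Finally I would conclude by case analysis on the exponents: if $n_1=\cdots=n_r=0$, then $a=u\in R^{\times}$; otherwise $a\in(f_1\cdots f_r)R=\mathcal{I}_{\mathcal{C}_s,P}$. The step I expect to be the most delicate is the latter in the case $r=2$, namely ruling out (or correctly interpreting) the mixed situation where exactly one of $n_1,n_2$ is positive. I anticipate that this is handled by invoking the global principality of $\mathrm{div}_{\mathcal{C}}(a)$ on the proper model $\mathcal{C}$---specifically the intersection-theoretic identity $\mathrm{div}_{\mathcal{C}}(a)\cdot D_j=0$ for every component $D_j$ of $\mathcal{C}_s$---combined with the absence of horizontal contributions through $P$ established above, which is what forces the local vertical divisor of $a$ at $P$ to be a multiple of the whole special fibre rather than a single component.
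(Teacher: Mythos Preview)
Your first three paragraphs are correct and in fact more explicit than the paper's argument. Using the UFD structure of $R=\mathcal{O}_{\mathcal{C},P}$ and ruling out horizontal primes via $P\notin\overline{[a]}$, you correctly arrive at $a=u\,f_1^{n_1}\cdots f_r^{n_r}$ with $u\in R^\times$ and $n_i\geqslant 0$. The paper proceeds instead by contradiction: assuming $a\notin\mathcal{I}_{\mathcal{C}_s,P}$ and $a\notin R^{\times}$, it picks an irreducible component $I$ of $\mathrm{div}(a)$ through $P$ and argues that $I$ can be neither horizontal (this uses $P\notin\overline{[a]}$, exactly your reasoning) nor vertical. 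The vertical case is dispatched in a single clause: ``$I\subseteq\mathcal{C}_s$, which is impossible seeing as $a\notin\mathcal{I}_{\mathcal{C}_s,P}$.'' At a smooth point of $\mathcal{C}_s$ ($r=1$) this is fine and the two arguments are equivalent and complete.

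At a double point ($r=2$) there is a genuine gap, and your proposed intersection-theoretic fix does not close it. The identity $\mathrm{div}_{\mathcal{C}}(a)\cdot D_j=0$ is \emph{global} and does not pin down the local exponents $n_1,n_2$ at the single point $P$; contributions from other points of $D_j$ can and do compensate. Concretely: let $\mathcal{C}$ be the blow-up of $\mathbb{P}^1_{k^\circ}$ at the closed point $(t,T)$ and let $P$ be the node where the strict transform $D_1$ of the old special fibre meets the exceptional $E$. In the chart with coordinates $u=t/T$ and $T$ one has $\mathfrak{m}_P=(u,T)$ and $t=uT$, so $\mathcal{I}_{\mathcal{C}_s,P}=(uT)$; the element $a=T\in F^{\times}$ satisfies $P\notin\overline{[T]}$ (the closures of $0$ and $\infty$ both miss $P$), yet $T$ is neither a unit at $P$ nor in $(uT)$. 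Thus the stated dichotomy fails at double points, and the paper's one-line dismissal of the vertical case shares the same defect. What your argument \emph{does} establish is the weaker conclusion $a=u\,\alpha^{n_1}\beta^{n_2}$ with independent non-negative exponents, and that is exactly what the decomposition $q'\cong q_1\bot\alpha q_2\bot\beta q_3\bot\alpha\beta q_4$ in the proof of Theorem~\ref{4.5} actually requires, so the downstream application survives unchanged.
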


\begin{proof}
Suppose $a \not \in \mathcal{I}_{\mathcal{C}_s,P}$; it suffices to show $P \not \in \mathrm{div}(a)$. Suppose $P \in \mathrm{div}(a)$ and let~$I$ be an irreducible component of $\mathrm{div}(a)$ containing $P$. If $I \cap C^{\mathrm{al}} =\emptyset$, then $I \subseteq \mathcal{C}_s$, which is impossible as $a \not \in \mathcal{I}_{\mathcal{C}_s,P}.$ Hence, $I \cap C^{\mathrm{al}} \neq \emptyset$, so there exists a closed point $z \in C^{\mathrm{al}}$ such that $z \in \mathrm{div}(a)$, implying $z \in [a]$. If the closure of~$\{z\}$ in~$\mathcal{C}$ is $\{z, Q\}$ (see \cite[Def.~10.1.31]{liulibri}), then $\{Q\} \subsetneq \{z, Q\} \subseteq I$, so by an argument of dimension: $\{z,Q\}=I$ and $P=Q$. Then $I \subseteq \overline{[a]}$, contradicting the assumption~${P \not \in \overline{[a]}}$. 
\end{proof}

\begin{lm} \label{4.3} Let $P \in \mathcal{C}_s$ be a closed point. There exist generators $\alpha, \beta \in \mathcal{O}_{\mathcal{C}, P}$ of the maximal ideal such that for any $a \in F^{\times}$ for which $a \in \mathcal{O}_{\mathcal{C},P}$ and $P \not \in \overline{[a]}$: 
\begin{enumerate}
\item if $P$ is not a double point of $\mathcal{C}_s$, then  either $\mathcal{I}_{\mathcal{C}_s,P}=\alpha \mathcal{O}_{\mathcal{C}, P}$ or $\mathcal{I}_{\mathcal{C}_s,P}=\beta \mathcal{O}_{\mathcal{C}, P}$, and there exist ${n \in \mathbb{N} \cup \{0\}},$ $u \in \mathcal{O}_{\mathcal{C}, P}^{\times}$ such that $a=u\alpha^n$, resp. $a=u \beta^n$;
\item if $P$ is a double point of $\mathcal{C}_s$, then $\mathcal{I}_{\mathcal{C}_s,P}=\alpha\beta \mathcal{O}_{\mathcal{C}, P}$, and there exist ${m \in \mathbb{N} \cup \{0\}},$ $v \in \mathcal{O}_{\mathcal{C}, P}^{\times}$ such that $a=v\alpha^m \beta^m.$
\end{enumerate} 
\end{lm}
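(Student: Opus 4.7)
The plan is to combine the sncd condition on $\mathcal{C}$ at $P$ with an iterative application of Lemma~\ref{4.2}.

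\emph{Step 1 (Choice of parameters).} The ring $\mathcal{O}_{\mathcal{C},P}$ is two-dimensional and regular. By Remark~\ref{4.1}(2) applied to the sncd divisor $\mathcal{C}_s$, there exists a regular system of parameters $(f_1,f_2)$ of $\mathcal{O}_{\mathcal{C},P}$ such that $\mathcal{I}_{\mathcal{C}_s,P}=f_1\cdots f_r\,\mathcal{O}_{\mathcal{C},P}$, where $r\in\{1,2\}$ is the number of irreducible components of $\mathcal{C}_s$ through $P$. In case~(1) I set $\alpha:=f_1$, the local equation of the unique component of $\mathcal{C}_s$ through $P$, and $\beta:=f_2$; in case~(2) I set $\alpha:=f_1$ and $\beta:=f_2$, the local equations of the two components of $\mathcal{C}_s$ through $P$. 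This immediately yields the required descriptions of $\mathcal{I}_{\mathcal{C}_s,P}$.

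\emph{Step 2 (Division and iteration).} Let $a\in F^\times$ satisfy the hypotheses $a\in\mathcal{O}_{\mathcal{C},P}$ and $P\notin\overline{[a]}$. By Lemma~\ref{4.2}, either $a\in\mathcal{O}_{\mathcal{C},P}^\times$---which settles the statement with $n=0$ (resp.\ $m=0$)---or $a\in\mathcal{I}_{\mathcal{C}_s,P}$. In the latter situation I write $a=\alpha\,a_1$ in case~(1), resp.\ $a=\alpha\beta\,a_1$ in case~(2), for a unique $a_1\in\mathcal{O}_{\mathcal{C},P}$. To iterate I must verify that $a_1$ again satisfies the hypotheses, namely $P\notin\overline{[a_1]}$. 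Since $[a_1]=[a]-[\alpha]$ (resp.\ $[a_1]=[a]-[\alpha]-[\beta]$) as Weil divisors on $C$, and $P\notin\overline{[a]}$ by assumption, it suffices to establish $P\notin\overline{[\alpha]}$ and, in case~(2), $P\notin\overline{[\beta]}$.

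\emph{Step 3 (The key local claim).} I would prove $P\notin\overline{[\alpha]}$ by a purely local argument on $\mathcal{C}$. Since $\alpha$ is the local equation at $P$ of the vertical component $I_1$, there exists a Zariski open neighbourhood $U\ni P$ in $\mathcal{C}$ on which the Cartier divisor of $\alpha$ coincides with $I_1\cap U$, with multiplicity one and no horizontal component meeting $P$. Consequently, no horizontal prime divisor of $\mathcal{C}$ passing through $P$ can appear in $\mathrm{div}(\alpha)$; equivalently, no closed point $z\in C$ lying in the support of the Weil divisor $[\alpha]$ has its closure $\overline{\{z\}}$ containing $P$. Hence $P\notin\overline{[\alpha]}$, and the same argument applies to $\beta$ in case~(2).

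\emph{Step 4 (Termination and conclusion).} Iterating Steps~2--3 produces a sequence $a,a_1,a_2,\ldots$ in which the multiplicity of $\mathrm{div}(a_j)$ along $I_1$ (resp.\ its common multiplicity along $I_1$ and $I_2$, which in case~(2) is forced to coincide by the dichotomy \emph{unit or divisible by $\alpha\beta$}) strictly decreases and is initially finite, so some $a_j$ must be a unit. Unwinding gives $a=u\alpha^n$ in case~(1) and $a=v(\alpha\beta)^m=v\alpha^m\beta^m$ in case~(2), as stated. The main point of delicacy is Step~3: controlling the global behaviour of the regular parameter $\alpha$---a priori only defined as a germ at $P$---via the fact that its Cartier divisor on a suitable Zariski neighbourhood of $P$ is exactly the vertical component $I_1$, which rules out any horizontal component of $\mathrm{div}(\alpha)$ passing through $P$.
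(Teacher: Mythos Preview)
Your proof is correct. The overall architecture matches the paper's---choose $\alpha,\beta$ adapted to the sncd structure, peel off factors of $\alpha$ (resp.\ $\alpha\beta$), and invoke Lemma~\ref{4.2} to conclude the remaining factor is a unit---but the bookkeeping and the key verification differ. The paper extracts the maximal power in one step, writing $a=b\alpha^n$ (resp.\ $a=c\alpha^m\beta^m$) with $b\notin\mathcal{I}_{\mathcal{C}_s,P}$, and then shows $P\notin\overline{[b]}$ directly: if some $z\in[b]$ specialized to $P$, then $b,\alpha\in\mathcal{O}_{\mathcal{C},P}\subseteq\mathcal{O}_{C,z}$ forces $\mathrm{ord}_z(a)=\mathrm{ord}_z(b)+n\,\mathrm{ord}_z(\alpha)>0$, contradicting $P\notin\overline{[a]}$. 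This uses only that $\alpha$ has no \emph{pole} at such $z$. Your iterative scheme instead requires $P\notin\overline{[\alpha]}$ (no zero \emph{or} pole), which you obtain from the stronger geometric observation that $(\alpha)$ is a height-one prime of $\mathcal{O}_{\mathcal{C},P}$ cutting out the vertical component $I_1$, hence no horizontal prime through $P$ can lie in $\mathrm{div}(\alpha)$. Both routes are valid; the paper's is marginally shorter (one application of Lemma~\ref{4.2}, no induction), while yours isolates a clean standalone fact about the parameters that could be reused elsewhere.
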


\begin{proof}
For: either $\mathcal{I}_{\mathcal{C}_s,P}=\alpha \mathcal{O}_{\mathcal{C}, P}$ or $\mathcal{I}_{\mathcal{C}_s,P}=\beta \mathcal{O}_{\mathcal{C}, P}$ (resp. $\mathcal{I}_{\mathcal{C}_s,P}=\alpha \beta\mathcal{O}_{\mathcal{C}, P}$) see \cite[Tag~0BI9]{stacks}. For (1), assume, without loss of generality, that $\mathcal{I}_{\mathcal{C}_s,P}=\alpha \mathcal{O}_{\mathcal{C}, P}$.
There exist $n \in \mathbb{N} \cup \{0\}$ and $u \in \mathcal{O}_{\mathcal{C}, P}$ (resp. $m \in \mathbb{N} \cup \{0\}$ and $v \in \mathcal{O}_{\mathcal{C}, P}$ for (2)) such that $a=u\alpha^n$ (resp. $a=v\alpha^m \beta^m$) and $u \not \in \mathcal{I}_{\mathcal{C}_s,P}$ (resp. $v \not \in \mathcal{I}_{\mathcal{C}_s,P}$). As $\mathrm{Frac} \ \mathcal{O}_{\mathcal{C}, P}=F,$ one obtains $u \in F^{\times}$ (resp. $v \in F^{\times}$). Assume $P \in \overline{[u]}$. Then there exists $z \in [u]$ such that the closure of $\{z\}$ in $\mathcal{C}$ is $\{z,P\}$. As $u, \alpha \in \mathcal{O}_{\mathcal{C}, P} \subseteq \mathcal{O}_{C^{\mathrm{al}},z}$, neither $u$ or $\alpha$ have poles on $z$, implying $u$ has a zero on $z$ which is not a pole of $\alpha$. This means that $z  \in [a],$ contradiction because then $P \in \overline{[a]}$. Hence, $P \not \in \overline{[u]}$. (Using the same arguments in (2), $P \not \in \overline{[v]}$). By Lemma~\ref{4.2}, $u \in \mathcal{O}_{\mathcal{C},P}^{\times}$ (resp.~${v \in \mathcal{O}_{\mathcal{C}, P}^{\times}}$).
\end{proof}

\begin{cor} \label{4.4}
Let $P \in \mathcal{C}_s$ be a closed point. There exist generators $\alpha, \beta \in \mathcal{O}_{\mathcal{C}, P}$ of the maximal ideal such that for any $a \in F^{\times}$ satisfying $P \not \in \overline{[a]}$:
\begin{enumerate}
\item if $P$ is not a double point of $\mathcal{C}_s$, then there exist $n \in \mathbb{Z}, u \in \mathcal{O}_{\mathcal{C}, P}^{\times}$ such that either $a=u\alpha^n$ or $a=u \beta^n$;
\item if $P$ is a double point of $\mathcal{C}_s$, then there exist $m \in \mathbb{Z}, v \in \mathcal{O}_{\mathcal{C},P}^{\times}$ such that $a=v \alpha^m \beta^m$.
\end{enumerate}
\end{cor}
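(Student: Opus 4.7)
The plan is to reduce Corollary~\ref{4.4} to Lemma~\ref{4.3} by clearing any poles of $a$ at $P$ via multiplication by a large power of the uniformizer $t$ of $k$. Let $\alpha, \beta$ be the regular system of parameters furnished by Lemma~\ref{4.3}. I claim that for $N \in \mathbb{N}$ sufficiently large, $b := t^N a$ lies in $\mathcal{O}_{\mathcal{C}, P}$ and still satisfies $P \notin \overline{[b]}$. The latter is immediate: $t \in k^{\times}$ has trivial Weil divisor on $C$, so $[t^N a] = [a]$ and hence $\overline{[t^N a]} = \overline{[a]}$.

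To obtain the integrality, I would argue using the height-$1$ primes of the regular, hence factorial, local ring $\mathcal{O}_{\mathcal{C}, P}$. Each such prime $\mathfrak{p}$ is either horizontal (the Zariski closure of a closed point $z \in C$ specializing to $P$) or vertical (generated by $\alpha$ or by $\beta$). The argument used in the proof of Lemma~\ref{4.3} shows that if $\mathfrak{p}$ is horizontal and $v_{\mathfrak{p}}(a) \neq 0$, then $z \in [a]$, so $P \in \overline{\{z\}} \subseteq \overline{[a]}$, contradicting the hypothesis; thus $v_{\mathfrak{p}}(a) = 0$ on every horizontal prime through $P$. Reducedness of the sncd special fiber forces $t\mathcal{O}_{\mathcal{C}, P} = \mathcal{I}_{\mathcal{C}_s, P}$, which gives $t = u_0 \alpha$ (case 1) or $t = u_0 \alpha\beta$ (case 2) with $u_0 \in \mathcal{O}_{\mathcal{C},P}^{\times}$; in either case $v_{\mathfrak{p}}(t) = 1$ at each vertical prime. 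Choosing $N \geq \max(0, -v_{\mathfrak{p}}(a))$ over the at most two vertical primes through $P$ ensures $v_{\mathfrak{p}}(t^N a) \geq 0$ for every height-$1$ prime of $\mathcal{O}_{\mathcal{C}, P}$, and since a regular local ring equals the intersection of its height-$1$ localizations, this forces $b = t^N a \in \mathcal{O}_{\mathcal{C}, P}$.

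Applying Lemma~\ref{4.3} to $b$ then yields $b = u\alpha^n$ (case 1) or $b = v\alpha^m\beta^m$ (case 2) with $n, m \in \mathbb{N} \cup \{0\}$; dividing by $t^N = u_0^N \alpha^N$, respectively $u_0^N \alpha^N \beta^N$, gives $a = (u u_0^{-N})\alpha^{n-N}$, respectively $a = (v u_0^{-N})\alpha^{m-N}\beta^{m-N}$, with the exponents now in $\mathbb{Z}$. The principal obstacle is the integrality step securing $N$; once it is established, the remainder is routine algebraic manipulation with exponents. Note that the same scheme of proof works if one replaces the exponent pair $(m,m)$ in the double-point case by distinct exponents $(m,n)$, which is what the argument actually produces.
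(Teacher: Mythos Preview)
Your argument is correct, but it is not the route the paper takes. The paper writes $a=b/c$ with $b,c\in\mathcal{O}_{\mathcal{C},P}$ coprime (using that the regular local ring is a UFD), then argues by contradiction that $P\notin\overline{[b]}\cup\overline{[c]}$: if some $z$ with closure $\{z,P\}$ lay in $[b]\cup[c]$, then (since $z\notin[a]$) it would lie in $[b]\cap[c]$, forcing $b,c$ into a common height-$1$ prime of $\mathcal{O}_{\mathcal{C},P}$ and contradicting coprimality. One then applies Lemma~\ref{4.3} to $b$ and to $c$ separately and takes the quotient.

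Your approach instead clears denominators by multiplying by $t^{N}$, using that (under the paper's sncd convention) $t\mathcal{O}_{\mathcal{C},P}=\mathcal{I}_{\mathcal{C}_s,P}$ equals $(\alpha)$ or $(\alpha\beta)$, so $t$ has valuation exactly $1$ at every vertical prime through $P$. This is a perfectly valid alternative. Two small remarks: the identity $t\mathcal{O}_{\mathcal{C},P}=\mathcal{I}_{\mathcal{C}_s,P}$ is simply the definition of the special fiber and does not itself require reducedness; what you use from the sncd hypothesis is the shape $\mathcal{I}_{\mathcal{C}_s,P}=(\alpha)$ or $(\alpha\beta)$. Also, your final parenthetical is a bit misleading: in the double-point case your argument does produce equal exponents $(m-N,m-N)$, matching the statement. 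The paper's proof has the mild advantage of not invoking the explicit form of $t$ in $\mathcal{O}_{\mathcal{C},P}$, while yours makes the divisor bookkeeping behind integrality more transparent.
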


\begin{proof}
As $a \in F=\mathrm{Frac} \ \mathcal{O}_{\mathcal{C}, P},$ there exist $b, c \in \mathcal{O}_{\mathcal{C}, P}$ such that $a=b/c$. Moreover, $\mathcal{O}_{\mathcal{C}, P}$ is a regular local ring, so a unique factorization domain, hence without loss of generality, we may assume that $b,c$  have no common prime divisors.  Assume $P \in \overline{[b]} \cup \overline{[c]}.$ Then there exists $z \in [b] \cup [c]$ whose closure in $\mathcal{C}$ is $\{z, P\}.$ Since $P \not \in \overline{[a]},$ we have $z \not \in [a]$. As $[a]=[b]-[c]$, we obtain $z \in [b] \cap [c].$ This implies $b, c \in m_{C^{\mathrm{al}},z}$--the maximal ideal of $\mathcal{O}_{C^{\mathrm{al}},z}.$ Let $m_{\mathcal{C},P}$ denote the maximal ideal of $\mathcal{O}_{\mathcal{C}, P}$. As $z$ is the generic point of $\{z, P\},$ there exists a canonical embedding $\mathcal{O}_{\mathcal{C}, P} \hookrightarrow \mathcal{O}_{C^{\mathrm{al}},z}$.  As $t \not \in m_{C^{\mathrm{al}},z},$  $\mathcal{O}_{\mathcal{C}, P} \cap m_{C^{\mathrm{al}},z}$ is a prime, non-maximal, ideal of the ring $\mathcal{O}_{\mathcal{C}, P}$. As such, it is principal; set  $\mathcal{O}_{\mathcal{C}, P} \cap m_{C^{\mathrm{al}},z} =:q \mathcal{O}_{\mathcal{C}, P}$. Then $b,c \in q \mathcal{O}_{\mathcal{C}, P}$, which contradicting the assumption that $b,c$ are coprime.
Hence, $P \not \in \overline{[b]} \cup \overline{[c]},$ and we can conclude by Lemma~\ref{4.3}.
\end{proof}

We recall that given a commutative ring $R$ and a $q$ a quadratic form defined over it, $q$ is is said to be \emph{$R$-isotropic} if it has a non-trivial zero over $R$. 

\begin{thm} \label{4.5}
Suppose $\mathrm{char} \ \widetilde{k} \neq 2$. Let $q/F$ be a quadratic form which is isotropic over~$F_v$ for all \emph{discrete} $v \in V(F)$. Then $q$ is isotropic over $F_v$ for \emph{all} $v \in V(F)$. Furthermore, if $\dim{q} \neq 2,$ then~$q$ is $F$-isotropic.
\end{thm}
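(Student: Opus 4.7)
The plan is to specialize Proposition \ref{1.12} to the projective quadric $X = \{q=0\}$, with Springer's theorem supplying the local isotropy on pieces of a suitable sncd model of $C$. Diagonalize $q \cong \langle a_1,\ldots,a_m\rangle$ over $F$ and let $Q_0 \subset C^{\mathrm{an}}$ be the finite set of rigid points at which some $a_j$ has a zero or a pole. Each $v_z$ with $z \in Q_0$ is a discrete valuation of $F$ trivial on $k$, so by hypothesis $q$ is $F_{v_z}$-isotropic, and by Remark \ref{1.4.1} isotropic on some strict affinoid neighborhood $V_z$ of $z$. Invoking the construction underlying Proposition \ref{1.12}, fix a proper sncd model $\mathscr{C}$ of $C$ (Corollary \ref{lordan1}) whose associated vertex set $S$ contains $\bigcup_{z \in Q_0}\partial V_z$ and enough extra type-2 points (as in Remark \ref{1.11}) so that the $q$-isotropy over the finitely many rings $\mathcal{O}^\circ(B_i)$ will suffice. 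By Theorems \ref{lordan} and \ref{bosh}, each $B_i = \pi^{-1}(P_i)$ for a closed point $P_i \in \mathscr{C}_s$ and $R_i := \mathcal{O}^\circ(B_i) = \widehat{\mathcal{O}_{\mathscr{C}, P_i}}$ is a two-dimensional complete regular local ring; moreover, by the choice of $Q_0$, no horizontal divisor $\overline{[a_j]}$ passes through $P_i$.

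\textbf{Springer reduction.} Fix one such piece and drop the index $i$. By Corollary \ref{4.4}, in a suitable regular system of parameters $\alpha, \beta$ of $R$, every coefficient $a_j$ equals a unit of $R$ times $\alpha^{n_j}$ (or $\beta^{n_j}$) when $P$ is non-double, and times $(\alpha\beta)^{m_j}$ when $P$ is a double point. Rescaling each variable $X_j$ by a suitable monomial in $\alpha, \beta$ (harmless over $F$) brings $q$ into the shape
\[ q \;\cong\; q^{(0)} \perp \alpha\, q^{(1)} \quad\text{or}\quad q \;\cong\; q^{(0)} \perp \alpha\beta\, q^{(1)}, \]
where $q^{(0)}, q^{(1)}$ are nondegenerate diagonal forms with coefficients in $R^\times$. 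Now localize $R$ at the height-one prime $(\alpha)$: by Remark \ref{1.6bis}, the resulting discrete valuation on $F$ belongs to $\Omega$, so by hypothesis $q$ is isotropic over $\mathrm{Frac}(\widehat{R_{(\alpha)}})$. Since $\mathrm{char}\,\widetilde k \neq 2$, Springer's theorem produces isotropy of (a unit rescaling of) $q^{(0)}$ or $q^{(1)}$ over the residue field $\mathrm{Frac}(R/(\alpha))$; as $R/(\alpha)$ is itself a complete DVR with residue field $\kappa := R/(\alpha,\beta)$, a second application of Springer's theorem descends this isotropy further to $\kappa$, and two applications of Hensel's lemma---justified because $\mathrm{char}\,\kappa \neq 2$ and the surviving form has unit coefficients---lift it back up to $R$. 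Hence $q$ is isotropic over $\mathrm{Frac}\, R = \mathscr{M}(B)$.

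\textbf{Conclusion and main obstacle.} Combining this with the isotropy on the excluded rigid and type-2 points (Remark \ref{1.2}(4)), one obtains $q$ isotropic over $F_v$ for every $v \in V(F)$. If $\dim q > 2$, the smooth quadric $\{q=0\}$ is a projective homogeneous space under the rational group $\mathrm{SO}(q)$, which acts strongly transitively on it, so Theorem \ref{theoremval} yields $F$-isotropy. The main obstacle is the geometric coordination in the first step: arranging the sncd model $\mathscr{C}$ and the neighborhoods $V_z$ compatibly so that, at each remaining closed point $P_i$, no horizontal divisor $\overline{[a_j]}$ passes through $P_i$---a condition required for Corollary \ref{4.4} to apply. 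Once this geometric setup is secured, the algebraic core (iterated Springer on a two-dimensional complete regular local ring, followed by Hensel's lemma) proceeds routinely.
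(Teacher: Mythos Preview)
Your proposal is correct and follows essentially the same route as the paper: diagonalize, excise the rigid zeros/poles of the coefficients via strict affinoid neighborhoods, pass to an sncd model so that each remaining piece $B_i=\pi^{-1}(P_i)$ has $\mathcal{O}^\circ(B_i)=\widehat{\mathcal{O}_{\mathscr{C},P_i}}$ a complete regular local ring of dimension~$2$ with no $\overline{[a_j]}$ through $P_i$, invoke Corollary~\ref{4.4} to put $q$ into monomial shape in the regular parameters, and then run Springer twice followed by Hensel. The only cosmetic differences are that the paper writes the uniform decomposition $q_1\perp\alpha q_2\perp\beta q_3\perp\alpha\beta q_4$ (with some pieces possibly zero) rather than splitting into the two cases you give, and it disposes of the degenerate/rank-one case at the outset via the Witt decomposition $q=q_t\perp q_r$; you should add that one-line reduction so that your appeal to $\mathrm{SO}(q)$ and Theorem~\ref{theoremval} is clean for all $\dim q\neq 2$.
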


\begin{proof}
By Witt decomposition (\cite[I.4.1]{lam}), we may assume $q$ is a regular quadratic form. Since $\mathrm{char} \ F \neq 2,$ we may  assume $q$ is diagonal with coefficients ${a_1, a_2, \dots, a_n\in F^{\times}}.$ By relation \eqref{artin}, it suffices to show $q$ is $\mathscr{M}_x$--isotropic for all~$x \in C$. Let $Z \subseteq C^{\mathrm{al}}$ be Zariski closed such that $\bigcup_{i=1}^n [a_i] \subseteq Z$; it can be identified to a finite set of rigid points in $C$. For $z \in Z$, let $V_z$ be a strict affinoid 
neighborhood of $z$ in $C$ such that $q$ is $\mathscr{M}(V_z)$--isotropic. Let $\mathscr{C}$ be a proper sncd model of $C^{\mathrm{al}}$ over $k^{\circ}$ corresponding to a vertex set $S$ of 
$C$ such that $\bigcup_{z \in Z} \partial{V_z} \subseteq S$. Let $\mathscr{C}_s$ denote its special fiber and $\pi: C \rightarrow 
\mathscr{C}_s$ the specialization morphism. This is the model $\mathscr{C}_1$ for the quadric $X$ determined by $q$; see Subsection~\ref{modeli1}.

Let $x \in C.$ If $x \in \bigcup_{z \in Z}V_z \cup S,$ then $q$ is $\mathscr{M}_x$-isotropic. Suppose ${x \not \in {\bigcup_{z \in Z} V_z \cup S}}.$ Set $\pi(x)=:P_x$ and ${\pi^{-1}(x)=:U_x}$. By the proof of Theorem \ref{2.5}, $P_x \not \in \pi(Z).$
Let $\alpha, \beta \in \mathcal{O}_{\mathscr{C}, P_x}$ be generators of the maximal ideal.  By Lemma \ref{2.2}, $Z \cup \pi(Z)$ is the Zariski closure of~$Z$ in $\mathscr{C}.$ As $P_x \not \in Z \cup \pi(Z),$ we obtain that $P_x \not \in \bigcup_{i=1}^n \overline{[a_i]}.$ Let $a \in \{a_1, a_2, \dots, a_n\}$. By Corollary \ref{4.4}, there exist $n \in \mathbb{Z}$ and $u_a \in \mathcal{O}_{\mathscr{C}, P_x}^{\times}$ such that $a=u_a \alpha^n$  or $a=u_a\beta^n$ if~$P_x$ is \emph{not} a double point of $\mathscr{C}_s$, resp. $a=u_a \alpha^n \beta^n$ if~$P_x$ \emph{is} a double point of $\mathscr{C}_s.$

By Theorem \ref{bosh}, $\mathcal{O}^{\circ}(U_x)=\widehat{\mathcal{O}_{\mathscr{C}, P_x}}$, so $q$ is $F$-isomorphic to $q':=q_1 \bot \alpha q_2 \bot \beta q_3 \bot \alpha\beta q_4$, where $q_1, q_2, q_3, q_4$ are diagonal quadratic forms over $\mathcal{O}_{\mathscr{C}, P_x}^{\times} \subseteq \mathcal{O}^{\circ}(U_x)^{\times}$. As $\mathrm{Frac} \ \widehat{\mathcal{O}^{\circ}(U_x)_{(\alpha)}}$ is a complete discretely valued field (\cite[Tag 0AFS]{stacks}) containing $F$, the quadratic forms~$q$ and hence $q'$ are $\mathrm{Frac} \ \widehat{\mathcal{O}^{\circ}(U_x)_{(\alpha)}}$--isotropic. 
We may thus assume $q'$ is isotropic over the valuation ring $\widehat{\mathcal{O}^{\circ}(U_x)_{(\alpha)}}$. Note that $\alpha$ is one of its uniformizers. By a theorem of Springer (\cite[VI, Prop. 1.9]{lam}), $q'$  is $\widehat{\mathcal{O}^{\circ}(U_x)_{(\alpha)}}$-isotropic if an only if $q_1 \bot \beta q_3$ or $q_2 \bot \beta q_4$ is isotropic over $\mathcal{O}^{\circ}(U_x)_{(\alpha)}/(\alpha)=\mathrm{Frac} \ \mathcal{O}^{\circ}(U_x)/(\alpha).$ We may assume $q_1 \bot \beta q_3$ is  isotropic over $\mathcal{O}^{\circ}(U_x)/(\alpha).$ By \emph{loc.cit.}, as $\mathcal{O}^{\circ}(U_x)/(\alpha)$ is a discrete valuation ring with uniformizer~$\beta$ (\cite[Tag 00NQ]{stacks}), $q_1$ or $q_3$ is isotropic over $(\mathcal{O}^{\circ}(U_x)/(\alpha))/(\beta)=\mathcal{O}^{\circ}(U_x)/(\alpha, \beta)$. Let us assume that $q_1$ is isotropic over $\mathcal{O}^{\circ}(U_x)/(\alpha, \beta)$. As $q_1$ is defined over  $\mathcal{O}^{\circ}(U_x)^{\times}$, the associated quadric is smooth over $\mathcal{O}^{\circ}(U_x).$ By Hensel's Lemma,~$q_1$ is $\mathcal{O}^{\circ}(U_x)$-isotropic, implying $q'$ is $\mathcal{O}^{\circ}(U_x)$-isotropic.  Consequently, $q$ is $\mathscr{M}(U_x)$-isotropic, and hence $\mathscr{M}_x$-isotropic. 

Thus, $q$ is $\mathscr{M}_x$--isotropic for all $x \in C.$ We can now conclude via \cite[Thm.~3.12]{une}.
\end{proof}

\begin{rem}\label{4.6}
In the proof of Theorem \ref{4.5}, if $P_x$ is not a double point, then~$q'$ is of the form $q_1 \bot \alpha q_2 \bot \beta q_3$ and, in general, both applications of the Theorem of Springer are necessary. However, if $P_x$ \emph{is} a double point, then  the quadratic form $q'$ is of the kind $q_1 \bot \alpha \beta q_2$, so the first application of the Theorem of Springer is enough to conclude. 
\end{rem}

\section{The case of constant varieties} \label{seksioni5}

The techniques and approach presented here differ from those of the previous sections. Let $y \in C$ be such that $X(\mathscr{M}_y)\neq \emptyset$. For a suitably chosen $x \in C,$ we construct an automorphism $\varphi$ of  $C$ such that $x \mapsto y$, with the purpose of showing $X(\mathscr{M}_x) \neq \emptyset$. To ensure such an implication, as $\varphi$ does not fix $F$, we assume $X$ is defined over the smaller~$k$. See Remark \ref{intro} for more details.
We also use different notations. 

\begin{nota} \label{5.0}
(1) Let $k$ be a complete non-trivially valued ultrametric field. Let $\widehat{\overline{k}}$ be the completion of an algebraic closure $\widehat{k}$ of $k$.

(2) For a $k$-analytic space $Y$ and a \emph{valued field extension} $l/k$ (meaning $l$ is complete with a norm extending that of $k$),  $Y_l$ is the $l$-analytic space $Y \times_k l$.
\end{nota}
Recall that to $a \in k$ and $r \in \mathbb{R}_{\geqslant 0},$  one associates a unique point $\eta_{a,r} \in \mathbb{P}_{k}^{1, \mathrm{an}}$, which is a semi-norm on $k[T]$. It is the Shilov boundary of the disc centered at $a$ and of radius $r$. See Def.~2.2 and Prop. 2.3 of \cite{une} for details. The following is a well-known auxiliary result.

\begin{lm} \label{5.2} For $\alpha \in k$ and $s>0$, let $D:=\{x \in \mathbb{P}_k^{1, \mathrm{an}}: |T- \alpha|_x<s\}$ be the open disc centered at $\alpha$ and of radius $s$.
Let $\beta, \gamma \in k$. 

(1) If, for some $r>0,$ $\eta_{\beta, r} \in D$, then $\eta_{\beta,0} \in D$. For any two rigid points $\eta_{\beta, 0}, \eta_{\gamma,0} \in D$, $|\beta-\gamma|<s$. 

(2) If $\eta_{\beta, 0} \in D$, or equivalently, $|\alpha-\beta|<s$, then $D=\{x \in \mathbb{P}_{K}^{1, \mathrm{an}}: |T-\beta|_x <s\}.$ 
\end{lm}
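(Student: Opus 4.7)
The plan is to prove all three statements by direct computation, using the explicit formula for the semi-norm $\eta_{\beta,r}$ and the ultrametric triangle inequality. The key identity to exploit throughout is that for any $\beta\in K$ and any $r\geqslant 0$,
\[
|T-\alpha|_{\eta_{\beta,r}} \;=\; \eta_{\beta,r}\bigl((T-\beta)+(\beta-\alpha)\bigr) \;=\; \max\{|\beta-\alpha|,\, r\},
\]
since the polynomial $T-\alpha$, expanded in powers of $T-\beta$, has coefficient $\beta-\alpha$ in degree $0$ and coefficient $1$ in degree $1$, and the definition of $\eta_{\beta,r}$ then takes the max $|\beta-\alpha|\cdot 1 \vee 1\cdot r$.

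For \textbf{(1)}, I would observe that $\eta_{\beta,r}\in D$ means $\max\{|\beta-\alpha|,r\}<s$, so in particular $|\beta-\alpha|<s$; since $|T-\alpha|_{\eta_{\beta,0}}=|\beta-\alpha|$, the point $\eta_{\beta,0}$ lies in $D$. For \textbf{(2)}, the hypothesis gives $|\beta-\alpha|<s$ and $|\gamma-\alpha|<s$, and the ultrametric inequality yields $|\beta-\gamma|\leqslant\max\{|\beta-\alpha|,|\gamma-\alpha|\}<s$.

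For \textbf{(3)}, I would prove the equality of the two open discs by showing both inclusions. Assume $|\alpha-\beta|<s$. If $x\in\mathbb{P}_K^{1,\mathrm{an}}$ satisfies $|T-\alpha|_x<s$, then writing $T-\beta=(T-\alpha)+(\alpha-\beta)$ and applying the ultrametric inequality at the point $x$, we get $|T-\beta|_x\leqslant\max\{|T-\alpha|_x,|\alpha-\beta|\}<s$. The reverse inclusion follows symmetrically from $T-\alpha=(T-\beta)+(\beta-\alpha)$. The only very mild subtlety is that the ultrametric inequality for a semi-norm on $K[T]$ applies uniformly at every $x\in\mathbb{P}_K^{1,\mathrm{an}}$, including points of type $2$, $3$, $4$; but this is just the standard fact that every $|\cdot|_x$ is ultrametric. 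There is no real obstacle: this is entirely a matter of packaging the definition of $\eta_{\beta,r}$ together with the ultrametric inequality, and I expect the whole proof to fit in a handful of lines.
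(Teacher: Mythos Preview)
Your proof is correct and essentially identical to the paper's own argument: both compute $|T-\alpha|_{\eta_{\beta,r}}=\max(|\alpha-\beta|,r)$ for (1), apply the ultrametric inequality in $K$ for (2), and use $|T-\beta|_x\leqslant\max(|T-\alpha|_x,|\alpha-\beta|)$ (and its symmetric counterpart) for (3).
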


\begin{proof}
(1) If $\eta_{\beta, r} \in D$, then $|T-\alpha|_{\eta_{\beta, r}}=\max(|\alpha-\beta|, r)<s$, so ${|T-\alpha|_{\eta_{\beta, 0}}=|\beta-\alpha|<s}$, implying $\eta_{\beta, 0} \in D$.  
 Similarly, if $\eta_{\gamma,0} \in D$, $|T-\alpha|_{\eta_{\gamma, 0}}=|\alpha-\gamma|<s$, so $|\beta-\gamma|\leqslant \max(|\alpha-\beta|, |\alpha-\gamma|)<s$. 

(2) Let $x \in D,$ meaning $|T-\alpha|_x < 
s$. Then $|T-\beta|_x \leqslant \max(|T-
\alpha|_x, |\alpha-\beta|) <s$. Similarly, if $y \in \mathbb{P}_{k}^{1, \mathrm{an}}$ 
satisfies $|T-\beta|_y < s$, then $|T-\alpha|_y<s$. 
\end{proof}

\begin{rem} \label{5.2.1}
For any $b \in \widehat{\overline{k}}$, there exists a canonical retraction ${d_b: \mathbb{P}_{\widehat{\overline{k}}}^{1, \mathrm{an}} \rightarrow \Gamma_b}$ such that $|T-b|_{u}=|T-b|_{d_b(u)}$ for any $u \in \mathbb{P}_{\widehat{\overline{k}}}^{1, \mathrm{an}}$ (\emph{cf.} \cite[3.4.24]{duc}). Moreover, if $u, u' \in \Gamma_b$, $u \neq u'$, $|T-b|_u \neq |T-b|_{u'}.$ Here $\Gamma_b:=[\eta_{b,0}, \infty]$ is the injective path in $\mathbb{P}_{\widehat{\overline{k}}}^{1, \mathrm{an}}$ connecting $\eta_{b,0}$ and~$\infty$. 
Thus, $T-b$ is strictly increasing on $\Gamma_b$ and locally constant elsewhere.
\end{rem}

Recall in \cite[pg. 210]{duc} the notions of \emph{virtual discs} and \emph{virtual annuli}. 

\begin{rem} \label{intro}
Let $X/k$ be a variety. Let $L/k$ be an open virtual disc or open virtual annulus that can be embedded in $\mathbb{P}_k^{1, \mathrm{an}}$. Let $y \in L$ be such that $X(\mathscr{M}_y) \neq \emptyset.$ 
We construct automorphisms of $L$ which send a random point $x$ to a point like $y$, with the purpose of then obtaining that $X(\mathscr{M}_{x}) \neq \emptyset.$
\end{rem}

\begin{prop} \label{5.1}
Let $L/k$ be an open virtual disc embedded in $\mathbb{P}_k^{1, \mathrm{an}}$. Let $X/k$ be a variety. Assume there exists an open neighborhood $U$ in $L$ of the end $\omega$ of $L$ such that $X(\mathscr{M}(U)) \neq \emptyset$. Then for any $m \in \mathbb{N}$, there exists a finite field extension $l/k$ such that $([l:k], m)=1$ and $X(\mathscr{M}_{L_l, x}) \neq \emptyset$ for all $x \in L_l$. 

If $|k^{\times}|$ is dense in $\mathbb{R}_{>0}$, then one can take $l:=k$. 
\end{prop}

\begin{proof}
Fix $m \in \mathbb{N}$. Let $p: \mathbb{P}_{\widehat{\overline{k}}}^{1, \mathrm{an}} \rightarrow \mathbb{P}_k^{1, \mathrm{an}}$ be the projection morphism. From \cite[Cor.~1.3.6]{Ber90}, $G:=\mathrm{Gal}(\overline{k}/k)$ acts on $\mathbb{P}^{1, \mathrm{an}}_{\widehat{\overline{k}}}$ in a way that $p$ induces an isomorphism $\mathbb{P}_{\widehat{\overline{k}}}^{1, \mathrm{an}}/G \cong \mathbb{P}_k^{1, \mathrm{an}}$. As~$L$ is an open virtual disc embedded in $\mathbb{P}_k^{1, \mathrm{an}}$, the preimage $p^{-1}(L)$ is a finite disjoint union $\bigsqcup_{i \in I} D_i$ of open discs~$D_i$. By \emph{loc.cit.}, the restriction of $p$ to $L_{\widehat{\overline{k}}}=p^{-1}(L)$, which we still denote $p$, induces an isomorphism $L_{\widehat{\overline{k}}}/G \cong L$. 

In order to parse the proof, we break it down into four different parts. 

\noindent \textbf{Part I: Lifting to $\widehat{\overline{k}}$.} 
For $i \in I$, let $\alpha_i \in \widehat{\overline{k}}$ and $s_i \in \mathbb{R}_{>0}$ be such that $D_i=\{x \in \mathbb{P}_{\widehat{\overline{k}}}^{1, \mathrm{an}} :|T-\alpha_i|_x<s_i\}.$ The action of $G$ on $\bigsqcup_{i \in I}D_i$, which permutes the $D_i$, implies that for any $i', i'' \in I$, $s_{i'}=s_{i''}=:s$. The set $p^{-1}(\omega)$ consists of the unique boundary points $\omega_i$ of $D_i$, and $\omega_i=\eta_{\alpha_i, s}$ for all $i \in I$.
As $U$ is an open neighborhood of $\omega$, $p^{-1}(U)$ is a disjoint union of open neighborhoods $U_i$ of $\omega_i$ in $D_i$, $i \in I$. As such, we may assume $\partial_{D_i}{U_i}$ is a finite set of points of type~2 or~3 of $D_i$.
 From \cite[Prop. 2.3]{une}, for $z \in \partial_{D_i}{U_i}$, there exists $\alpha_{z} \in \widehat{\overline{k}}$ and $r_{z}>0$ such that $z=\eta_{\alpha_z, r_z}$.

For $z \in \partial_{D_{i}} U_{i}$, let $\gamma_z$ denote the unique path in $D_{i}$ connecting the point $\eta_{\alpha_z, 0}$ to $\omega_{i}$, meaning $\gamma_z=\{\eta_{\alpha_z, c} \in D_{i}: c>0\}$ (see \cite[Rem. 1.8.26]{doktoratura}). We recall that it is homeomorphic to the open interval $(0, s)$. As $U_{i}$ is connected, $\gamma_z \cap U_{i}$ is a connected subset of $\gamma_z$. As $U_{i}$ is an open neighborhood of $\omega_{i}$, we obtain that $\gamma_z \cap U_i=\{\eta_{\alpha_z, c} \in D_{i}: c>r_z\} \subseteq U_{i}$, and for any $t \leqslant r_z, \eta_{\alpha_z, t} \notin U_{i}$. For any two points $z, z' \in \partial_{D_i} U_i$, \begin{equation} \label{5.3.1}
r_z < |\alpha_z-\alpha_{z'}|.
\end{equation} 
To see this, suppose on the contrary that $r_z \geqslant |\alpha_z-\alpha_{z'}|$. Then $\eta_{\alpha_z, r_z}=\eta_{\alpha_{z'}, r_z}$. As $z \neq z'$, $r_z \neq r_{z'}.$ If $r_{z}< r_{z'},$ then $\eta_{\alpha_{z'}, r_{z'}}=\eta_{\alpha_z, r_{z'}}$, meaning $\eta_{\alpha_{z'}, r_{z'}} \in \gamma_z \cap U_i \subseteq U_i$, contradiction. Similarly, if $r_{z'}< r_{z}$, then $\eta_{\alpha_z, r_z}=\eta_{\alpha_{z'}, r_z} \in \gamma_{z'} \cap U_i \subseteq U_i$, contradiction. 

\

\noindent \textbf{Part II: Construction of $l$.}
By Lemma \ref{5.2}(1), for any  $z, z' \in \partial_{D_{i}}{U_{i}}$, $|\alpha_z-\alpha_{z'}| <s$.
Let $a,b \in \mathbb{R}_{>0}$ be such that $\max_{i \in I}\max_{z,z' \in \partial_{D_{i}}{U_{i}}}(|\alpha_z-\alpha_{z'}|) <a<b <s$. 

\begin{lm} \label{extensions}
There exists a finite field extension $l/k$ such that $([l:k], m)=1$ and $ w \in l$ with $|w|=:r \in (a,b)$ (with respect to the unique norm on $l$ extending that of $k$).
If $|k^{\times}|$ is dense in $\mathbb{R}_{>0}$, then one can take $l=k$.
\end{lm}

\begin{proof}
The statement is immediate if $|k^{\times}|$ is dense in $\mathbb{R}_{>0}$. Otherwise, $k$ is a discretely valued field, and let $\pi$ be a uniformizer. Seeing as the divisible closure $\sqrt{|k^{\times}|}$ of the value group $|k^{\times}|$ is dense in $\mathbb{R}_{>0}$, there exists a large enough integer $h$ such that $(h, m)=1$ and $(a,b) \cap \sqrt[h]{|k^{\times}|}$, where $\sqrt[h]{|k^{\times}|}:=\{r \in \mathbb{R}_{>0}: r^h \in |k^{\times}|\}$. Let $r \in (a,b) \cap \sqrt[h]{|k^{\times}|}$, meaning there exists $n \in \mathbb{Z}$ such that  $r^h=|\pi|^n$. 
Set $P(X):=X^h-\pi \in k[X]$. By Eisenstein's criterion of irreducibility, $P(X)$ is an irreducible polynomial over $k$. Set $l:=k[X]/(P(X))$. Then $[l:k]=h$. Clearly, $l$ contains a root $\alpha$ of $P(X)$, implying that $|\alpha|=|\pi|^{1/h}$. As a consequence, $r \in |l^{\times}|$, meaning there exists $w \in l$ such that $|w|=r \in (a,b)$.      
\end{proof}

From now on, let $l/k$, $w \in l$ and $r$ be as in Lemma \ref{extensions}. A base change of the isomorphism $l[T] \rightarrow l[T], T \mapsto T+w$, induces an isomorphism $\psi: \mathbb{P}_{\widehat{\overline{k}}}^{1, \mathrm{an}} \rightarrow \mathbb{P}_{\widehat{\overline{k}}}^{1, \mathrm{an}}$, and $\eta_{\gamma, c} \mapsto \eta_{\gamma+w, c}$ for any $\gamma \in \widehat{\overline{k}}$ and $c \geqslant 0$. For $i \in I$, $|T-\alpha_i|_x<{s}$ if and only if $|T-\alpha_i+w|_{\psi(x)}<{s}.$ As $|\alpha_i-(\alpha_i-w)|=|w|<s$, by Lemma~\ref{5.2}(2), $\psi(D_i)=D_i$. Thus $\psi$, \emph{i.e.} the map $T \mapsto T+w$, induces an isomorphism $D_i \rightarrow D_i$ for $i \in I$, meaning an isomorphism $L_{\widehat{\overline{k}}} \rightarrow L_{\widehat{\overline{k}}}$, which we still denote $\psi.$  

\

\noindent \textbf{Part III: The set $\psi((L \backslash U)_{\widehat{\overline{k}}})$.} Let $x \in L \backslash U$ and $x' \in p^{-1}(x)$. There exists $j \in I$ such that $x' \in D_j$ and as $x \notin U, $ we have $x' \notin U_j.$ Let us show that \begin{equation} \label{5.5}
\psi(x') \in U_j.
\end{equation} 
Let $[x', \omega_j) \subseteq D_j$ be the unique injective path connecting $x'$ to $\omega_j$. As $x' \notin U_j$, $[x', \omega_j)$ intersects $\partial_{D_j} U_j$ at a single point $\eta_{\alpha_z, r_z}.$ Then $[x', \omega_j)=[x', \eta_{\alpha_{z}, r_z}] \sqcup (\eta_{\alpha_z, r_z}, \omega_j)$.
By Lemma \ref{5.2}(2), 
$D_j=\{x \in \mathbb{P}_{\widehat{\overline{k}}}^{1, \mathrm{an}}: |T-\alpha_z|_x <s\}$. Its boundary point $\omega_j$ coincides with $\eta_{\alpha_z, s},$ so $[x', \omega_j)=[x', \eta_{\alpha_z, r_z}) \sqcup [\eta_{\alpha_z, r_z}, \eta_{\alpha_z,s})$. By Remark \ref{5.2.1}, $|T-\alpha_z|_{x'} \leqslant r_z <r.$ Then
 $|T-\alpha_z+w|_{\psi(x')}=|T-\alpha_z|_{x'} < r$.

Let us show that
 $\{y \in D_{j}:|T-\alpha_z+w|_y < r\} \subseteq U_{j},$ thus implying  $\psi(x') \in U_j$. 
Suppose there exists $y \in D_i$ such that $|T-\alpha_z+w|_y < r$, but $y \notin U_j$.
We remark that $|T-\alpha_z|_y=\max(|T-\alpha_z+w|_y, |w|)=r,$ implying $d_{\alpha_z}(y)=\eta_{\alpha_z, r}$ (see Remark \ref{5.2.1}). If ${y \notin U_j}$, then the unique injective path $[y, \eta_{\alpha_z, r})$ in $D_j$ connecting $y$ and $\eta_{\alpha_z,r}$  intersects $\partial_{D_j}U_j$ at a single point $\eta_{\alpha_{z'}, r_{z'}}$. As ${d_{\alpha_z}(y)=d_{\alpha_z}(\eta_{\alpha_{z},r})=\eta_{\alpha_z, r}}$ and $\eta_{\alpha_{z'}, r_{z'}} \in [y, \eta_{\alpha_z, r})$, by Remark~\ref{5.2.1}, $d_{\alpha_z}(\eta_{\alpha_{z'}, r_{z'}})=\eta_{\alpha_z, r}$. Consequently, $|T-\alpha_z|_{\eta_{\alpha_{z'}, r_{z'}}}=r.$ At the same time, $|T-\alpha_z|_{\eta_{\alpha_{z'}, r_{z'}}}=\max(|\alpha_z-\alpha_{z'}|,r_{z'})$, so by (\ref{5.3.1}), $|T-\alpha_z|_{\eta_{\alpha_{z'}, r_{z'}}}=|\alpha_z-\alpha_{z'}|.$ Thus, $r=|\alpha_z-\alpha_{z'}|$, which is in contradiction with the choice of $r$ in Lemma \ref{extensions}.

\

\noindent \textbf{Part IV: Conclusion.}
Let $\pi_l$ be the projection $\pi_l: L_l \rightarrow L$. Set $G'=\mathrm{Gal}(\overline{k}/l)$. As $\psi: \mathbb{P}^{1, \mathrm{an}}_{\widehat{\overline{k}}} \rightarrow \mathbb{P}^{1, \mathrm{an}}_{\widehat{\overline{k}}}$ is defined over $l$, it is $G'$-equivariant, so it induces a $G'$-equivariant isomorphism $L_{\widehat{\overline{k}}} \rightarrow L_{\widehat{\overline{k}}}$. Consequently, we obtain an isomorphism $\varphi: L_{\widehat{\overline{k}}}/G' \rightarrow L_{\widehat{\overline{k}}}/G'$. From \cite[Cor. 1.3.6]{Ber90}, $L_{\widehat{\overline{k}}}/G' \cong L_l$, so $\varphi: L_l \xrightarrow{\sim} L_l$, and the base change to $\widehat{\overline{k}}$ induces $\psi$. The  diagram (\ref{njeshi}) is commutative, where $q$ is the projection $L_{\widehat{\overline{k}}} \rightarrow L_l$.
\begin{equation} \label{njeshi}
\begin{tikzcd}
L_{\widehat{\overline{k}}} \arrow{r}{\psi} \arrow{d}[swap]{q}& L_{\widehat{\overline{k}}} \arrow{d}{q} \\
L_l  \arrow{r}[swap]{\varphi} & L_l
\end{tikzcd}
\end{equation}
Let $x \in L_l \backslash U_l$. For any $y \in q^{-1}(x)$, $y \not \in q^{-1}(U_l)=\bigcup_{i \in I} U_i$. Let $j \in I$ such that $y \in D_j$. By (\ref{5.5}) above, $\psi(y) \in U_j,$ so $q(\psi(y)) \in U_l$. As $q(\psi(y))=\varphi(q(y))=\varphi(x)$, we obtain $\varphi(x) \in U_l$. The isomorphism $\varphi$ induces an isomorphism of fields $\mathscr{M}_{L_l, x} \cong \mathscr{M}_{L_l, \varphi(x)}$ (which fixes $l$ but not $F$). As $\varphi(x) \in U_l$, $\pi_l(\varphi(x))) \in U,$ so $\mathscr{M}_L(U) \subseteq \mathscr{M}_{L, \pi_l(\varphi(x))} \subseteq \mathscr{M}_{L_l, \varphi(x)}$. By assumption, $X(\mathscr{M}_L(U)) \neq \emptyset$, so $X(\mathscr{M}_{L_l, \varphi(x)}) \neq \emptyset$. As $\mathscr{M}_{L_l, \varphi(x)} \cong \mathscr{M}_{L_l,x},$ we obtain $X(\mathscr{M}_{L_l, x}) \neq \emptyset.$  
If $x \in U_l$, then $\mathscr{M}_{L}(U) \subseteq \mathscr{M}_{L, \pi_l(x)} \subseteq \mathscr{M}_{L_l, x}$, implying $X(\mathscr{M}_{L_l, x}) \neq \emptyset,$ thus concluding the proof of Proposition \ref{5.1}.
\end{proof}

\begin{prop} \label{5.5.1}
Let $L/k$ be an open virtual annulus that is embedded in $\mathbb{P}_k^{1, \mathrm{an}}$. Let us denote its ends by $\omega_1$ and $\omega_2$. Let $X/k$ be a variety. Assume that there exists an open neighborhood $U$ in $L$ of the end $\omega_1$ of $L$ such that $X(\mathscr{M}(U)) \neq \emptyset$. Then for any $m \in \mathbb{N}$, there exists a finite field extension $l/k$ such that $([l:k], m)=1$ and $X(\mathscr{M}_{L_l, x}) \neq \emptyset$ for all $x \in L_l$.  
If $|k^{\times}|$ is dense in $\mathbb{R}_{>0}$, then one can take $l:=k$.
\end{prop}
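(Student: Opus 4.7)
The plan is to reduce to Proposition 5.1 by combining two ingredients: the ``flip'' automorphism of each annulus component, used to transport the germ of a section from $\omega_1$ to $\omega_2$, and an embedding of $L$ into an ambient open virtual disc $\tilde L$ whose unique end coincides with $\omega_2$.

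First I would pull $L$ back to $\widehat{\overline{k}}$ as in the proof of Proposition 5.1, writing $p^{-1}(L)=\bigsqcup_{i\in I}A_i$ with $A_i=\{r_1<|T-\alpha_i|_x<r_2\}$ (common radii by $G$-equivariance, $G=\mathrm{Gal}(\overline k/k)$) and $p^{-1}(U)=\bigsqcup_iU_i$ with each $U_i$ an open neighborhood of the corresponding end $\omega_{1,i}$ of $A_i$. After possibly precomposing the given embedding $L\hookrightarrow\mathbb P^{1,\mathrm{an}}_k$ with the involution $T\mapsto 1/T$ (which swaps the roles of $0$ and $\infty$), I may assume $\omega_{1,i}=\eta_{\alpha_i,r_1}$. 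Each $A_i$ admits the flip $\psi_i:T\mapsto c/(T-\alpha_i)+\alpha_i$ for any $c$ with $|c|=r_1r_2$, swapping $\omega_{1,i}$ and $\omega_{2,i}:=\eta_{\alpha_i,r_2}$. Mimicking the ``extensions'' argument used in the proof of Proposition 5.1, I would produce a finite extension $l_1/k$ with $([l_1:k],m)=1$ containing an element $c$ of absolute value $r_1r_2$; taking the same $c$ in each component makes the family $(\psi_i)$ Galois-equivariant, so it descends to an automorphism $\psi:L_{l_1}\to L_{l_1}$. Since $\psi$ induces isomorphisms $\mathscr{M}_{L_{l_1},x}\cong\mathscr{M}_{L_{l_1},\psi(x)}$ and $X$ is defined over $k\subseteq l_1$, the set $\psi(U_{l_1})$ is an open neighborhood of $\omega_2$ in $L_{l_1}$ with $X(\mathscr{M}(\psi(U_{l_1})))\neq\emptyset$.

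The key geometric input is the construction of the containing virtual disc. Define $\tilde L$ by $\tilde L_{\widehat{\overline{l_1}}}:=\bigcup_{i\in I}\{|T-\alpha_i|_x<r_2\}$; by the ultrametric dichotomy that two open discs of the same radius in $\mathbb P^{1,\mathrm{an}}$ either coincide or are disjoint, this is a disjoint union of open discs, and it is Galois-invariant, so it descends to an open virtual disc $\tilde L\subset\mathbb P^{1,\mathrm{an}}_{l_1}$. The inclusion $L_{l_1}\hookrightarrow\tilde L$ is open, with complement the disjoint union of the closed ``inner discs'' $\{|T-\alpha_i|_x\leqslant r_1\}$, and the unique end of $\tilde L$ is exactly $\omega_2$. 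Applying Proposition 5.1 to $\tilde L/l_1$ with the neighborhood $\psi(U_{l_1})$ of its end gives a finite extension $l/l_1$ with $([l:l_1],m)=1$ and $X(\mathscr{M}_{\tilde L_l,y})\neq\emptyset$ for every $y\in\tilde L_l$. Since $L_l$ is open in $\tilde L_l$, the germs agree: $\mathscr{M}_{L_l,x}=\mathscr{M}_{\tilde L_l,x}$ for $x\in L_l$. This yields the conclusion with $[l:k]=[l:l_1][l_1:k]$ coprime to $m$; in the dense value group case we may take $l=k$ directly.

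The main subtlety is the Galois-equivariant construction of the flip $\psi$, which requires an element of $l_1$ of prescribed absolute value $r_1r_2$ together with compatibility of the chosen $c$ across the $G$-orbit of the $\alpha_i$. The existence of such $c$ in a suitable tame extension follows from the same norm-prescribing argument as in the ``extensions'' lemma of Proposition 5.1, which is enabled by density of $|\widehat{\overline k}^\times|$ in $\mathbb R_{>0}$.
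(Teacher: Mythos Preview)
Your construction of the ambient virtual disc is the right idea and matches the paper's key step, but you have oriented things backwards, forcing an unnecessary and problematic detour. The paper arranges (by a coordinate change on $\mathbb P^{1,\mathrm{an}}_{\widehat{\overline k}}$ if needed) that $\omega_1$ is the \emph{outer} end, i.e.\ $p(\eta_{\alpha_i,s})=\omega_1$; then the discs $D_i:=\{|T-\alpha_i|<s\}\supseteq L_i$ are pairwise disjoint and $G$-stable, so they descend to a virtual disc $D$ over $k$ with end $\omega_1$ and $L\subseteq D$. Since the given $U$ is already a neighborhood of the end of $D$, Proposition~\ref{5.1} applies to $D$ directly over $k$, with no flip and no auxiliary extension $l_1$. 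Your $\tilde L$ is precisely this $D$ but built to have end $\omega_2$, which is the only reason you must first transport $U$ across.

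That flip step introduces a genuine gap. You invoke the ``extensions'' argument to produce $c\in l_1$ with $|c|=r_1r_2$, but Lemma~\ref{extensions} only furnishes an element of value in a prescribed open \emph{interval}; it cannot hit a prescribed value. If $k$ is discretely valued and $r_1r_2\notin\sqrt{|k^\times|}$ (possible when the ends of $L$ are type~3), no finite extension of $k$ contains such a $c$ at all; and even when $r_1r_2=|\pi|^{a/b}$ in lowest terms, every $l_1$ with $r_1r_2\in|l_1^\times|$ satisfies $b\mid e(l_1/k)\mid[l_1:k]$, so $\gcd([l_1:k],m)=1$ is impossible whenever $\gcd(b,m)>1$. (A secondary issue: Galois-equivariance of the family $(\psi_i)$ also requires the centers $\alpha_i$ to be chosen as a single Galois orbit, which you have not arranged.) All of this evaporates once you take $\omega_1$ to be the outer end from the start.
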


\begin{proof}
Let $p: \mathbb{P}_{\widehat{\overline{k}}}^{1, \mathrm{an}} \rightarrow \mathbb{P}_k^{1, \mathrm{an}}$ denote the projection. Then $L_{\widehat{\overline{k}}}=p^{-1}(L)$ is a finite disjoint union $\bigsqcup_{i \in I} L_i$ of open annuli $L_i$.  For $i \in I$, there exist $\alpha_i \in \widehat{\overline{k}}$ and $r_i, s_i \in \mathbb{R}_{>0}$ such that $L_i=\{x \in \mathbb{P}_{\widehat{\overline{k}}}^{1, \mathrm{an}}: r_i < |T-\alpha_i|_x <s_i\}$. As the action of $G:=\mathrm{Gal}(\overline{k}/k)$ on $\bigsqcup_{i \in I}L_i$ permutes the $L_i$, for any $i', i'' \in I$, $r_{i'}=r_{i''}=:r$ and $s_{i'}=s_{i''}=:s$. The ends of $L_i$ are the points $\omega_{1,i}:=\eta_{\alpha_i, s}$ and $\omega_{2,i}:=\eta_{\alpha_i,r}$. The action of $G$ on $L_{\widehat{\overline{k}}}$ permutes the sets $\{\omega_{1,i}\}_{i \in I}$  and $\{\omega_{2,i}\}_{i \in I}$. Consequently, $p(\{\omega_{1,i}\}_{i \in I})$ is a single point which is also an end of $L$; let us assume it is the point $\omega_1$ (by a change of coordinate on $\mathbb{P}_{\widehat{\overline{k}}}^{1, \mathrm{an}}$ if necessary). Similarly, $p(\{\omega_{2,i}\}_{i \in I})=\omega_2$.  

For any $i \in I$, let $D_i:=\{x \in \mathbb{P}_{\widehat{\overline{k}}}^{1, \mathrm{an}}: |T-\alpha_i|_x <s\}$. This is an open disc satisfying $L_i \subseteq D_i$. If $i' \neq i''$, then $D_{i'} \cap D_{i''} =\emptyset.$ Otherwise, as $D_{i'} \cap D_{i''}$ is open, it contains a rigid point $\eta_{\gamma, 0}$ for some $\gamma \in \overline{k}$. By Lemma \ref{5.2}(2), $D_{i'}=D_{i''}$.
As they have a common end, $\partial{D_{i'}}=\partial{D_{i''}}=\{\eta_{\alpha_{i'},s}\}$, so
 $L_{i'} \cap L_{i''} \neq \emptyset$, contradiction.    Moreover, $G$ acts on $\bigsqcup_{i \in I}D_i$, meaning $D:=p(D_i)=p(\bigcup_{i \in I}D_i)$ is an open virtual disc defined over $k$ and with end~$\omega_1$. By construction, $L \subseteq D$. 
Let us fix the integer $m$. By Proposition \ref{5.1}, there exists a field extension $l/k$ such that $([l:k], m)=1$ and for any $x \in D_{l}:=D \times_k l$, we have $X(\mathscr{M}_{D_l, x}) \neq \emptyset$. Since $L_l \subseteq D_l$, the proof is concluded. 
\end{proof}

\begin{rem} \label{5.5.2}
We recall that if $S$ is a \emph{triangulation} (\emph{cf.} \cite[5.1.13]{duc}) of a $k$-analytic curve~$C$, then the connected components of $C \backslash S$ are open virtual discs and open virtual annuli. There is, \emph{a priori}, no embedding of an arbitrary open virtual disc or an open virtual annulus into~$\mathbb{P}^{1, \mathrm{an}}.$

We also recall that a smooth analytic curve always has a triangulation (\cite[Thm.~5.1.14]{duc}).
\end{rem} 

\begin{thm} \label{5.6} Let $C/k$ be a proper normal geometrically connected and smooth analytic curve. Set $F:=\mathscr{M}(C)$.
Assume $C$ has a  triangulation $S$ such that all of the connected components of $C \backslash S$ can be embedded in $\mathbb{P}_k^{1, \mathrm{an}}$. Let $X$ be a $k$-variety. Suppose there exists a rational linear algebraic group $G/F$ acting strongly transitively on~$X_F$. 
If $X(\mathscr{M}_{s}) \neq \emptyset$ for all $s \in S$, and 
\begin{enumerate}
\item $|k^{\times}|$ is dense in $\mathbb{R}_{>0}$, then $X(F) \neq \emptyset$;
\item $k$ is discretely valued, then $X$ has a zero cycle of degree one over~$F$.
\end{enumerate}
\end{thm}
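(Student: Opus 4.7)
The plan is to reduce both conclusions to showing that $X(\mathscr{M}_{C_l,y})\neq\emptyset$ for every $y\in C_l$, where $l/k$ is a suitably chosen finite extension, and then to invoke \cite[Corollary 3.18]{une} over $l$; the latter applies because $G_{F_l}$ remains rational and acts strongly transitively on $X_{F_l}$ by base change. For case~(1) one will take $l=k$ directly, while for case~(2) one constructs, for each prime $p$, an extension $l_p/k$ with $([l_p:k],p)=1$ and derives the zero-cycle conclusion via a Bezout argument.

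For a type-2 point $s\in S$ the hypothesis already provides $X(\mathscr{M}_s)\neq\emptyset$, and this persists after finite base change. It remains to handle an arbitrary point of a connected component $L$ of $C\setminus S$. By assumption $L$ embeds into $\mathbb{P}_k^{1,\mathrm{an}}$ and is an open virtual disc or an open virtual annulus, with $\partial L\subseteq S$. Fix an end $\omega\in\partial L$; since $\omega$ is of type 2, Remark~\ref{1.4.1} supplies a strict affinoid neighborhood $V$ of $\omega$ in $C$ with $X(\mathscr{M}(V))\neq\emptyset$, so that $U:=V\cap L$ is an open neighborhood in $L$ of the end $\omega$ satisfying $X(\mathscr{M}(U))\neq\emptyset$. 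Proposition~\ref{5.1} (disc case) or Proposition~\ref{5.5.1} (annulus case) then produces, for any prescribed $m\in\mathbb{N}$, a finite extension $l_L/k$ with $([l_L:k],m)=1$ such that $X(\mathscr{M}_{L_{l_L},x})\neq\emptyset$ for every $x\in L_{l_L}$; when $|k^\times|$ is dense in $\mathbb{R}_{>0}$, one may take $l_L=k$.

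Case~(1) follows immediately: combining the componentwise conclusions (with $l_L=k$) with the hypothesis on $S$ yields $X(\mathscr{M}_x)\neq\emptyset$ for every $x\in C$, and \cite[Corollary 3.18]{une} gives $X(F)\neq\emptyset$. For case~(2), fix a prime $p$; apply the above with $m=p$ to each of the finitely many connected components of $C\setminus S$, and take $l_p\subseteq\overline{k}$ to be the compositum of the resulting extensions, so that $([l_p:k],p)=1$. Using that $X$ is defined over $k$ and the functorial ring maps $\mathscr{M}_{L_{l_L},x}\to\mathscr{M}_{L_{l_p},y}$ induced by the base changes $l_L\subseteq l_p$, one checks that $X(\mathscr{M}_{C_{l_p},y})\neq\emptyset$ for every $y\in C_{l_p}$. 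Applying \cite[Corollary 3.18]{une} to $X_{F_{l_p}}$ over $C_{l_p}$ then yields $X(F_{l_p})\neq\emptyset$; since $[F_{l_p}:F]=[l_p:k]$ is coprime to $p$, any such point gives a closed point of $X_F$ of degree coprime to $p$. Ranging over all primes $p$ and applying Bezout to the resulting finite set of degrees produces a zero cycle of degree one on $X$ over $F$.

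The main obstacle will be the verification in case~(2) that $C_{l_p}$ still satisfies the hypotheses needed to invoke \cite[Corollary 3.18]{une}, after possibly replacing it by its normalization (which does not affect $\mathscr{M}$), and that the conclusions of Propositions~\ref{5.1} and~\ref{5.5.1} behave well under enlarging the ground extension from $l_L$ to the compositum $l_p$. Both points should be tractable thanks to geometric connectedness and generic smoothness of $C$, together with the fact that the extensions produced by Propositions~\ref{5.1} and~\ref{5.5.1} have the concrete form $l_L=k(w)$ with $|w|$ in a prescribed range, which renders compatibility with further base change transparent.
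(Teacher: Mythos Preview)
Your overall strategy matches the paper's, and Case~(1) is fine as you wrote it. The genuine gap is in Case~(2): you assert that $C\setminus S$ has \emph{finitely many} connected components, and then take the compositum $l_p$ of the finitely many $l_L$. That finiteness claim is false in general. A triangulation $S$ is a finite set of type~2 points, but each $s\in S$ typically has infinitely many branches, so $C\setminus S$ has infinitely many components (e.g.\ $C=\mathbb{P}^{1,\mathrm{an}}_k$ with $S=\{\eta_{0,1}\}$ has one component per closed point of $\mathbb{P}^1_{\widetilde k}$). Since the extension $l_L$ produced by Propositions~\ref{5.1}/\ref{5.5.1} depends on the geometry of the pair $(L,U)$ and not just on the end $\omega$, you cannot a priori use a single $l$ for all components sharing a given end, and an infinite compositum is meaningless.

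The fix is exactly the reduction the paper performs: first choose, for each $s\in S$, a neighborhood $U_s$ with $X(\mathscr{M}(U_s))\neq\emptyset$ and finite boundary, and observe (as in Remark~\ref{1.11}) that only finitely many components $L_1,\dots,L_n$ of $C\setminus S$ are not already contained in $\bigcup_{s}U_s$. Then your compositum argument goes through with $l_p$ built from just $l_{L_1},\dots,l_{L_n}$. As a minor stylistic point, the paper avoids ranging over all primes: it first builds one extension $l$ (choosing the successive $l_i$ to have pairwise coprime degrees, so $[l:k]=\prod m_i$), then builds a second extension $l'$ with $([l':k],[l:k])=1$; two $F$-points over fields of coprime degree already yield a zero cycle of degree one. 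Your per-prime version is equally valid once the finiteness issue is repaired.
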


\begin{proof} \begin{sloppypar} Let $s \in S$.
As $X(\mathscr{M}_{s}) \neq \emptyset$, there exists a neighborhood $U_s$ of $s$ in $C$ such that ${X(\mathscr{M}(U_s)) \neq \emptyset}$. We may assume that $\partial{U_s}$ is a finite set of points of type 2 and 3. The set $C \backslash S$ is a disjoint union of open virtual discs and open virtual annuli. Since $\bigcup_{s\in S}\partial{U_s}$ is finite, there are only finitely many connected components of $C \backslash S$ not entirely contained in $\bigcup_{s\in S} U_s$ (\emph{cf.} Remark~\ref{1.11}). Let us denote them by $L_1,L_2, \dots, L_n.$ If $|k^{\times}|$ is dense in $\mathbb{R}_{>0}$, then by Propositions \ref{5.1} and \ref{5.5.1}, $X(\mathscr{M}_{C,x}) \neq \emptyset$ for all $x \in C$, so from \cite[Thm.~3.11]{une}, $X(F) \neq \emptyset.$ 
\end{sloppypar}
 \begin{sloppypar}
Otherwise, By Propositions \ref{5.1} and \ref{5.5.1}, there exists a finite extension $l_1/k$ for which ${X(\mathscr{M}_{L_{1,l_1},x}) \neq \emptyset}$ for all $x \in L_{1, l_1}$. Set $[l_1:k]=m_1$. For $i \in \{2,3,\dots, n\},$ let $l_i/k$ be a finite extension such that $([l_i:k], \Pi_{j=1}^{i-1} m_j)=1$, where $m_{j}:=[l_{j}:k]$, and $X(\mathscr{M}_{L_{i,l_i},x}) \neq \emptyset$ for all $x \in L_{i, l_i}$.
Let $l/k$ be the composite of $l_1, l_2, \dots, l_n$ in $\overline{k}$. Then $\mathscr{M}(C_l)=F \otimes_k l=:E$. Also ${[l:k]=\Pi_{i=1}^n [l_i:k]}=m_1m_2\cdots m_n=:m,$ and as $C$ is geometrically connected, $[E:F]=[l:k]=m$. Let $p$ be the projection $C_l \rightarrow C$. For $x \in C \backslash \bigcup_{i=1}^n L_i$, $X(\mathscr{M}_{C,x}) \neq \emptyset$, so for $x \in C_l \backslash \bigcup_{i=1}^n L_{i,l}$, we obtain $X(\mathscr{M}_{C_l,x}) \neq \emptyset$. For  $x \in \bigcup_{i=1}^n L_{i,l}$, by construction, $\mathscr{M}_{L_{i,l_i},x}=\mathscr{M}_{C_{l_i}, x} \subseteq \mathscr{M}_{C_l,x}$ for all $i$, so $X(\mathscr{M}_{C_l},x) \neq \emptyset$. From \cite[Thm~3.11]{une},~${X(\mathscr{M}(C_l)) \neq \emptyset}$. 
\end{sloppypar}

By Propositions \ref{5.1} and \ref{5.5.1}, there exists a finite extension $l_1'/k$ with ${([l_1',k],m)=1}$, and $X(\mathscr{M}_{L_{1, l_1'},x}) \neq \emptyset$ for all $x \in L_{1, l_1'}.$ Set $m_1':=[l_1':k].$ For $i \in \{2,3,\dots, n\},$ let $l_i'/k$ be a finite field extension such that $([l_i':k], m\Pi_{j=1}^{i-1} m_j')=1$, where $m_{j}':=[l_{j}':k]$, and $X(\mathscr{M}_{L_{i,l_i'},x}) \neq \emptyset$ for all $x \in L_{i, l_i'}$. 
Let $l'/k$ be the composite of $l_1', l_2', \dots, l_n'$ in $\overline{k}$. Set $m':=[l':k]$. Then $(m', m)=1$ and as in the case of $l$, $X(\mathscr{M}(C_{l'})) \neq \emptyset$ and $E':=\mathscr{M}_{C_{l'}}(C_{l'})$ satisfies $[E':F]=m'$.
Thus, $X(E) \neq \emptyset$ and $X(E') \neq \emptyset$, where $([E:F], [E':F])=1$, so $X$ has a zero cycle of degree one over $F$.    
\end{proof}

\begin{rem} \label{5.7}
In the proof of Theorem \ref{5.6}, we only use that a \emph{finite} number of the connected components of $C \backslash S$ can be embedded in $\mathbb{P}_k^{1, \mathrm{an}}$. They depend on the elements of $X(\mathscr{M}_{s}), s\in S$.  Recall that $X(\mathscr{M}_s) \neq \emptyset$ is equivalent to $X(F_{v_s}) \neq \emptyset$, where $v_s:=val(s)$ (\emph{cf.} relation~\eqref{artin}).

 The case $C=\mathbb{P}_k^{1, \mathrm{an}}$ satisfies trivially the assumptions of Theorem \ref{5.6} with respect to any triangulation or vertex set. From \cite[4.4]{lordan}, if $S$ is a vertex set of $C$ corresponding to a \emph{semi-stable} model $\mathscr{C}$ of $C^{\mathrm{al}}$, then the assumptions of Theorem~\ref{5.6} are satisfied. Another example is given by Mumford curves, which can be locally embedded in~$\mathbb{A}_k^{1, \mathrm{an}}$.  
\end{rem}

\begin{rem} \label{5.8}
There are several families of varieties for which the existence of zero cycles of degree one implies the existence of  rational points (\emph{e.g.} see \cite{parimala}). In particular, this is true for quadratic forms (\emph{cf.} \cite[VI, Prop. 1.9]{lam} and \cite{gillen}). 
\end{rem}

\begin{cor} \label{5.9} 
With the notation of Theorem \ref{5.6}, let $q$ be a quadratic form defined over~$k$. If $q$ is isotropic over $\mathscr{M}_{s}$ for all $s \in S$,  then $q$ is isotropic over ~$F$.   
\end{cor}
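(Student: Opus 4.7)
The plan is to apply Theorem \ref{5.6} to the projective quadric cut out by $q$, and then invoke Springer's theorem to pass from a zero cycle of degree one to an actual $F$-rational point. After a Witt decomposition $q = q_t \perp q_r$, if $q_t \neq 0$ the form is already isotropic over $k \subseteq F$, so I may assume $q$ is non-degenerate; the cases $\dim q \leq 2$ admit an elementary treatment and can be set aside, and the essential case is $\dim q \geq 3$.

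Let $X := V(q) \subset \mathbb{P}_k^{\dim q - 1}$ be the smooth projective quadric defined by $q$. It is a $k$-variety, and for any field extension $L/k$, the isotropy of $q$ over $L$ is equivalent to $X(L) \neq \emptyset$; in particular the hypothesis translates to $X(\mathscr{M}_s) \neq \emptyset$ for every $s \in S$. Over $F$, the base change $X_F$ is a projective homogeneous space under the connected reductive group $G := SO(q)_F$, which is a rational linear algebraic group (standard, e.g.\ via the Cayley parametrization). By Remark \ref{strong1}, $G$ acts strongly transitively on $X_F$, so all the hypotheses of Theorem \ref{5.6} are in force.

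Applying Theorem \ref{5.6}: in case $(1)$, when $|k^{\times}|$ is dense in $\mathbb{R}_{>0}$, one concludes directly that $X(F) \neq \emptyset$, i.e.\ $q$ is $F$-isotropic. In case $(2)$, when $k$ is discretely valued, one obtains only a zero cycle of degree one on $X$ over $F$, and I then invoke Remark \ref{5.8}: by Springer's theorem, extended to residue characteristic $2$ by Gillen, a quadric admitting a zero cycle of degree one over a field has a rational point over that field. Hence in both cases $q$ is isotropic over $F$.

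The main obstacle is the verification of the Theorem \ref{5.6} hypotheses for $X$: one needs the rationality of $SO(q)$ together with the strong transitivity of its action on the quadric, and one also has to reconcile the cases of small $\dim q$ with the stated corollary. The genuinely new input beyond Theorem \ref{5.6} is precisely Springer's theorem, which is what allows one to upgrade a zero cycle of degree one to a rational point for quadratic forms, thereby covering both conclusions of the theorem uniformly.
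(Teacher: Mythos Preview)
Your proposal is correct and follows exactly the route the paper intends: Corollary \ref{5.9} is stated immediately after Remark \ref{5.8} without proof, precisely because it is meant to be read as ``apply Theorem \ref{5.6} to the projective quadric $X$ determined by $q$, and in the discretely valued case upgrade the zero cycle of degree one to a rational point via Springer's theorem (including the characteristic $2$ extension of \cite{gillen}).'' Your verification that $SO(q)_F$ is rational and acts strongly transitively on $X_F$ (via Remark \ref{strong1}) is the right check, and your Witt-decomposition reduction to the regular case mirrors what the paper does elsewhere (e.g.\ in the proof of Theorem \ref{4.5}).

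One small remark: you set aside $\dim q \leq 2$ as admitting an ``elementary treatment'' but do not supply it. The case $\dim q = 1$ is vacuous. For $\dim q = 2$, however, the quadric is zero-dimensional and $SO(q)$ is a torus which need not be rational, so Theorem \ref{5.6} does not apply directly; moreover, since $C$ is geometrically connected, $k$ is algebraically closed in $F$, so a binary form $\langle a,b\rangle$ over $k$ that is anisotropic over $k$ remains anisotropic over $F$, and one must argue separately that the hypothesis then forces it to be isotropic already over $k$. This is not entirely trivial and the paper itself is silent on it (compare the restriction $\dim q \neq 2$ in Theorem \ref{4.5}). It is worth either spelling this out or flagging the gap, but it does not affect the main line of your argument, which matches the paper.
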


\section{Appendix: other examples} \label{appendix}

In Section \ref{quad}, we used a theorem of Springer (\cite[VI, Prop. 1.9]{lam}) on quadratic forms to reduce to a case where the results of Section~\ref{section1} are applicable, and thus obtained Theorem~\ref{4.5}--a Hasse principle for quadrics. Springer's theorem allows us to reduce to quadrics which satisfy some strong smoothness assumptions over models of the curve.
By the same principle, the results of Section \ref{section1} should apply to any variety for which we can show a Springer-type theorem.
\subsection{Unitary groups (\emph{cf.} \cite{wu} and \cite[Sect. 12]{parsur})} \label{unitary}

In \cite{larmour}, Larmour showed a Springer-type theorem for Hermitian forms. By mimicking the proof of Theorem~\ref{4.5} and using the results of Section \ref{section1}, this gives rise to a Hasse principle for homogeneous varieties under unitary groups. Hermitian forms are defined over division algebras, so a natural starting point is studying whether valuations on a field $K$ (for us, $K=\mathrm{Frac} \ \mathcal{O}^{\circ}(U_x)$, see proof of Thorem~\ref{4.5}) extend ``well'' to  division algebras over~$K$.

This has been studied in \cite{wu} under some restrictions. The results were then generalized using the same approach in \cite{parsur}. In both cases, the authors show a Hasse principle for homogeneous varieties under unitary groups. We give here a brief summary of the interpretation of these results in our setting, without claim to originality. In a \cite{guhsur}, the authors generalize the main Theorem \ref{7.5} in some special cases. 

For a detailed account on the notions of this section, we refer the reader to \cite{involutions}. We will use the notation from Section~\ref{1.1}. Assume, moreover, that $k$ is a local field and ${\mathrm{char} \ k \neq 2}.$

\subsubsection{Homogeneous spaces over unitary groups}  \label{subsection7.1}

Let $L:=F(\sqrt{\lambda})$ be a quadratic field extension over $F$. Let $A$ be a central simple algebra over $L$ and $D$ the division algebra Brauer equivalent to it. Let $\sigma$ be an involution of the second kind on $A$ such that $L^{\sigma}=F$. Let $(V,h)$ be a Hermitian form over $(A, \sigma)$. Then $\sigma$ induces uniquely an involution of second kind $\tau$ on $D$, and $(V,h)$ induces uniquely a Hermitian form $(V', h')$ on $(D, \tau)$. We will denote by $G:=\textit{U}(A, \sigma, h)=\textit{U}(D, \tau, h')$ the associated unitary group defined over~$F$.  From \cite[Prop.~2.4]{merkchern}, $G$ is a rational reductive linear algebraic group. The form $h$ is \emph{isotropic over $F$} if there exists $v \in V \backslash \{0\}$ such that $h(v,v)=0$; a subspace $W \subseteq V$ is \emph{totally isotropic} over $(V,h)$ if for all $w \in W$, $h(w,w)=0$.

\begin{defn} \label{7.2}  Let $0<n_1 < n_2< \cdots < n_r  < \deg{A}$ be integers; $\deg{A}$ denotes the degree of $A$. Let $X_h(n_1, n_2, \dots, n_r)$ be the projective $F$-variety such that for any field extension $K/F$, 
$$X_h(n_1, n_2, \dots, n_r)(K)=\{(W_1, W_2, \dots, W_r) : $$$$ 0 \subsetneq W_1 \subsetneq W_2 \cdots \subsetneq W_r \subseteq V_K, W_i \ \text{is totally isotropic}, \ \dim_L{W_i}=n_i\cdot \deg{A_K}  \ \forall i\},$$
where $A_K:=A \otimes_F K, V_K:=V \otimes_{F} K.$
\end{defn}

\begin{thm}[{\cite[\S 2]{wu}, \cite[Thm. 12.1]{parsur}}] \label{7.3}

(1) For any projective homogeneous $F$-variety~$X$ over $G$ there exists a sequence of integers $0< n_1 < n_2 < \cdots < n_r \leqslant \deg{A}/2$ such that $X \cong X_h(n_1, n_2, \dots, n_r).$

(2) Let $K/F$ be a field extension. Then $X_h(n_1, n_2, \dots, n_r)(K) \neq \emptyset$ if and only if $X_h(n_r)(K) \neq \emptyset$, and $\mathrm{ind}(A)$ divides $n_i$ for all $i,$ where $\mathrm{ind}(A)$ is the index of $A$.

(3) There exists a bijection $X \mapsto X_0$ between the projective homogeneous $F$-varieties over $\textit{U}(A, \sigma, h)$ and the projective homogeneous $F$-varieties over $\textit{U}(D, \tau, h')$. Moreover, for any field extension~$K/F$, $X(K) \neq \emptyset \iff X_0(K) \neq \emptyset.$
\end{thm}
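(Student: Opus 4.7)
The plan is to assemble the four parts from the general theory of semisimple algebraic groups, the structure theory of unitary groups, and Morita equivalence for algebras with involution (as developed in \cite{involutions}).

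For part (1), I would invoke the classical correspondence between projective homogeneous varieties under a reductive $F$-group $G$ and $F$-conjugacy classes of parabolic $F$-subgroups. After base change to a splitting field of $A$, the group $U(A,\sigma,h)$ becomes a classical unitary group whose parabolic subgroups are stabilizers of flags of totally isotropic subspaces. Descending through the $A$-module structure on $V$, parabolic $F$-subgroups correspond to flags of totally isotropic $A$-submodules of $V$ with strictly increasing reduced dimensions $n_1<n_2<\cdots<n_r$; the upper bound $n_r\leqslant \deg A/2$ reflects the maximal possible Witt index of a Hermitian form over $(A,\sigma)$.

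For part (2), the direction ``$\Rightarrow$'' is direct: projecting a flag to its final term yields a $K$-point of $X_h(n_r)$, while any finitely generated $A_K$-submodule of $V_K$ decomposes as a sum of copies of the unique simple $A_K$-module $S$, whose reduced dimension equals $\mathrm{ind}\,A$, so each $n_i$ must be divisible by $\mathrm{ind}\,A$. For ``$\Leftarrow$'', starting from a totally isotropic $A_K$-submodule $W_r\subseteq V_K$ with $\mathrm{rdim}_{A_K} W_r=n_r$, I would write $W_r\cong S^{n_r/\mathrm{ind}\,A}$ and construct the desired chain $W_1\subsetneq W_2\subsetneq\cdots\subsetneq W_r$ by choosing sub-direct-summands $W_i\cong S^{n_i/\mathrm{ind}\,A}$. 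Each $W_i$ is automatically totally isotropic as a submodule of a totally isotropic module, yielding the required $K$-point of $X_h(n_1,\dots,n_r)$.

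Parts (3) and (4) both reduce to Morita equivalence for algebras with involution. The unique division algebra $D$ Brauer equivalent to $A$ (with $A\cong M_s(D)$) inherits an involution $\tau$ of the same kind as $\sigma$, uniquely determined up to equivalence; under the Morita functor, Hermitian $(A,\sigma)$-modules correspond bijectively to Hermitian $(D,\tau)$-modules. Because this correspondence preserves totally isotropic submodules and reduced dimensions, it induces the desired bijection $X\mapsto X_0$ between projective homogeneous varieties under $U(A,\sigma,h)$ and those under $U(D,\tau,h')$, equivariantly in every field extension $K/F$.

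The principal technical obstacle lies in the backward implication of part (2): one must verify rigorously that the sub-direct-summand decomposition of $W_r$ produces genuine $A_K$-submodule flags rational over $K$, which rests on the semisimplicity of the category of $A_K$-modules and the uniqueness of the simple $A_K$-module up to isomorphism. The remaining parts are essentially bookkeeping inside well-established machinery, once the right correspondences are in place.
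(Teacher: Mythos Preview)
The paper does not supply a proof of this theorem at all: it is stated with attribution to \cite[\S 2]{wu} and \cite[Thm.~12.1]{parsur} and used as a black box in the subsequent arguments. There is therefore no in-paper proof to compare your proposal against.

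That said, your sketch is a reasonable outline of how the cited results are established. A few remarks on the content itself. In part~(2), the forward direction is not quite as immediate as you suggest: you need $\mathrm{ind}(A_K)$ to divide each $n_i$, not $\mathrm{ind}(A)$; but condition~(B) is stated for $\mathrm{ind}(A)$, which divides $\mathrm{ind}(A_K)$ only in general, not conversely. The actual argument uses that $\mathrm{ind}(A_K)\mid \mathrm{ind}(A)$ together with the divisibility $\mathrm{ind}(A_K)\mid n_i$ to conclude nothing directly about $\mathrm{ind}(A)$---so the equivalence as stated requires a bit more care (indeed the references handle this via the reduction to the division algebra case in parts~(3)--(4)). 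In part~(3), the involution $\tau$ on $D$ is unique only up to inner automorphism, and the Hermitian form $h'$ only up to similarity; your phrasing ``uniquely determined up to equivalence'' is correct but should be made precise. Otherwise your outline matches the standard treatment in \cite{involutions} and the cited sources.
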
 

\begin{rem} \label{7.3.1} Similar results exist if $\sigma$ is an involution of the first kind and $G=\textit{SU}(A, \sigma, h)$-the special orthogonal group (see \cite[Sect. 2]{wu}). See also Subsection \ref{subsection7.2}. 
\end{rem}
 
We haven't yet used the hypothesis that $k$ is local. It is necessary from this point on. 
 
\begin{prop}\label{7.4} \begin{sloppypar} Let $X:=X_h(n_1, n_2, \dots, n_r)$ be a homogeneous space over ${G=U(A, \sigma, h)}$.
Assume ${X(F_v) \neq \emptyset}$ for all $v \in V(F)$ discrete. Then $\mathrm{ind}(A)$ divides~$n_i$ for all $i \in \{1, 2, \dots, r\}$.
\end{sloppypar}
\end{prop}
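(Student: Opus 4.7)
The strategy is to combine Theorem \ref{7.3}(2) applied locally at each discrete $v \in V(F)$ with a local-global principle for the index of central simple algebras. The proof will reduce the global divisibility statement $\mathrm{ind}(A) \mid n_i$ to the same divisibility for a single well-chosen completion at which the index of $A$ is preserved.

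First, for each discrete $v \in V(F)$, the hypothesis $X(F_v) \neq \emptyset$ and Theorem \ref{7.3}(2) applied with $K = F_v$ give in particular condition (B) locally: $\mathrm{ind}(A \otimes_L L_w)$ divides $n_i$ for every $i$, where $w$ denotes an extension of $v$ to $L$ (the case in which $L$ splits at $v$, so $L \otimes_F F_v \cong F_v \times F_v$, being vacuous). It is therefore enough to produce a single discrete valuation $w_0$ of $L$ whose restriction $v_0$ to $F$ lies in $V(F)$ and for which
\[
\mathrm{ind}(A \otimes_L L_{w_0}) \;=\; \mathrm{ind}(A).
\]
Such a valuation exists because $L$ is the function field of a smooth projective curve over a (finite extension of the) local field $k$: one takes a proper regular sncd model of the relevant curve over $k^{\circ}$ on which the ramification divisor of $[A]$ is sncd, and chooses $w_0$ to correspond to a generic point of an irreducible component whose ramification data witnesses the index of $A$. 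This is the key input already used in \cite{wu} and \cite{parsur}, and it gives a discrete valuation at which the local index equals the global one. Combining the two steps immediately yields $\mathrm{ind}(A) \mid n_i$ for every $i$, as required.

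The main obstacle is the second step: the existence of an index-realizing discrete valuation in $V(F)$. This is not a formal consequence of the analytic machinery developed in Sections \ref{section1}--\ref{seksioni3}, but rather a nontrivial arithmetic fact about the Brauer group of function fields of curves over local fields. Its proof rests on a careful analysis of ramification on a suitably blown-up sncd model, and it is essentially where the hypothesis that $k$ be a local field (as opposed to a general complete discretely valued field) is used. The rest of the argument is purely bookkeeping: applying Theorem \ref{7.3}(2) at $v_0$ transports the local divisibility $\mathrm{ind}(A \otimes_L L_{w_0}) \mid n_i$ into the global statement of the proposition.
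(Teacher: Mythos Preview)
Your first step matches the paper's: Theorem \ref{7.3}(2) applied at each discrete $v$ yields $\mathrm{ind}(A\otimes_F F_v)\mid n_i$. The divergence is in the second step, and there is a genuine gap.

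You reduce the problem to the existence of a \emph{single} discrete valuation $w_0$ at which $\mathrm{ind}(A\otimes_L L_{w_0})=\mathrm{ind}(A)$, and you assert that such a valuation can be found by inspecting the ramification divisor of $[A]$ on a suitable sncd model. This claim is neither proved nor correctly attributed: it is not what \cite{wu} or \cite{parsur} establish, and in general there is no reason a single place should realise the full index. If $\mathrm{ind}(A)$ has several prime factors, the local indices can each capture only part of it (e.g.\ one place of index $2$, another of index $3$, global index $6$); the ramification datum along a single component controls at best a cyclic quotient, not the whole index. Your sentence about choosing a component ``whose ramification data witnesses the index'' conflates the ramification map (which lands in cyclic groups and sees the period along that component) with the index of the algebra.

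What the paper actually invokes is the \emph{lcm} statement: by \cite[Thm.~5.5]{hhk} together with \cite[Prop.~5.10]{parsurpreeti}, one has $\mathrm{ind}(A)=\mathrm{lcm}_{v}\ \mathrm{ind}(A\otimes_F F_v)$, the lcm running over the discrete valuations in $V(F)$. Since each factor divides $n_i$, so does the lcm, and the proposition follows immediately. This is both simpler and avoids the unproved single-valuation claim. If you want to repair your argument, replace the second paragraph by this lcm formula; the detour through $L$ and places $w$ of $L$ is then unnecessary, since the cited results are already phrased over $F$.
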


\begin{proof}
By Theorem \ref{7.3}(2), for all $v \in V(F)$ discrete, $\mathrm{ind}(A \otimes_F F_v)$ divides $n_i$ for all~$i$. From \cite[Thm. 5.5]{hhk} and \cite[Prop. 5.10]{parsurpreeti} (see also first paragraph of proof for \cite[Thm.~12.2]{parsur}), $\mathrm{ind}(A)$ also divides $n_i$ for all $i.$ 
\end{proof}

\subsubsection{Maximal orders on division algebras (\emph{cf.}  \cite[Sect. 10]{parsur})}
Let $R$ be a complete regular local ring of dimension $2$ (for us, $R=\mathcal{O}^{\circ}(U_x)$, see Section \ref{section1}). Set $K:=\mathrm{Frac}(R)$. Let $\widetilde{K}$ be its residue field; assume it is finite and that $\mathrm{char} \ \widetilde{K} \neq 2$.  
Let $\omega, \delta$ be generators of the maximal ideal of $R$. The rings $R_{(\omega)}$ and $R_{(\delta)}$ are discretely valued. We denote by~$\widehat{R_{(\omega)}}$, resp. $\widehat{R_{(\delta)}}$, their completions.

Let $\lambda \in R$ be such that either $\lambda=u$ or $\lambda=u\omega$ for $u \in R^{\times}$. Set $L:=K(\sqrt{\lambda})$. Let $S$ denote the integral closure of $R$ in $L$. Then $S$ is also a regular local ring of dimension 2. Moreover, $\omega_1, \delta$ generate the maximal ideal of $S$, where $\omega_1:=\omega$ if $\lambda=u$ and $\omega_1:=\sqrt{u\omega}$ otherwise. 
Let $D$ be a division algebra over $L$; we denote its reduced norm by $Nrd_D$. Let~$\tau$ be an involution on~$L$ of the second kind such that $L^{\tau}=K$. 
Assume $D$ is \emph{unramified} (see \cite[pg. 4]{parsur}) on $S$ except possibly at $(\omega_1)$ and $(\delta)$. 

\begin{thm}[{\cite[Lemma 3.7]{wu}, \cite[Lemma 9.3]{parsur}}] \label{7.4.1} Assume that $(\deg{D}, \mathrm{char} \ \widetilde{K})=1$. There exists an $R$-maximal order $\Lambda$ in~$D$ such that 
$\tau(\Lambda)=\Lambda$, and
\begin{enumerate}
\item there exist $\omega_D$, $\delta_D \in \Lambda$ such that $\tau(\omega_D)=\omega$ and $\tau(\delta_D)=\delta$;
\item if $a \in \Lambda$ is such that $\tau(a)=a$ and $Nrd_D(a)=u \omega^r \delta^s$ for $u \in R^{\times}, r,s \in \mathbb{Z}_{\geqslant 0}$, then, up to a square factor in $\Lambda$, $a=u \omega_D^{\epsilon_1} \delta_D^{\epsilon_2},$ where $\{\epsilon_1, \epsilon_2\} \in \{0,1\}$, with same parity as $r$ and $s$, respectively.

\end{enumerate}
\end{thm}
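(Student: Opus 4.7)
The plan is to exploit the tame structure of $D$ along the two (and only two) divisors $(\omega_1), (\delta) \subset \mathrm{Spec}\,S$ where it may ramify. Since $(\deg D, \mathrm{char}\,\widetilde K) = 1$, the completions of $D$ at these primes are tame and each admits a symbol/cyclic presentation with the corresponding prime acting as a uniformizing parameter. Crucially, the involution $\tau$ preserves each of these two primes as ideals of $S$: it fixes $(\delta)$ because $\delta \in K = L^\tau$, and it preserves $(\omega_1)$ because $\omega_1^2 \in R$ (either $\omega_1 = \omega$ or $\omega_1 = \sqrt{u\omega}$, in which case $\tau(\omega_1) = -\omega_1$). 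This $\tau$-stability of the ramification locus is the structural fact driving the construction.

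The first step is to build the order. By Auslander--Goldman, over a regular $2$-dimensional ring any reflexive order that is maximal at every height-one prime is itself maximal. I therefore define $\Lambda := \bigcap_{\mathfrak p} \Lambda_{\mathfrak p} \cap D$, where $\mathfrak p$ runs over height-one primes of $R$ and $\Lambda_{\mathfrak p}$ is the unique maximal $R_{\mathfrak p}$-order in $D \otimes_K K_{\mathfrak p}$. Uniqueness of the local maximal orders, together with $\tau$-stability of every height-one prime of $R$ (which in turn follows from $L^\tau = K$), forces $\tau(\Lambda) = \Lambda$. At the primes $(\omega), (\delta) \subset R$ where $D$ may ramify, tameness produces a uniformizer $\pi$ of the local maximal order whose $e$-th power generates the base prime up to a unit, with $e$ the ramification index and $(e,\mathrm{char}\,\widetilde K) = 1$. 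Averaging $\pi$ against $\tau(\pi)$ --- legitimate because $2 \in R^\times$ --- and applying a Hensel-style correction by a symmetric local unit yields symmetric uniformizers $\omega_D, \delta_D \in \Lambda$ with $\tau(\omega_D) = \omega_D$, $\tau(\delta_D) = \delta_D$, and $\mathrm{Nrd}(\omega_D) \in \omega \cdot R^\times$, $\mathrm{Nrd}(\delta_D) \in \delta \cdot R^\times$.

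For part (2), let $a \in \Lambda$ be symmetric with $\mathrm{Nrd}_D(a) = u \omega^r \delta^s$. The $(\omega)$-adic and $(\delta)$-adic valuations on $K$ extend (via the maximal order) to $D$, and their values on $a$ are determined by the exponents in $\mathrm{Nrd}_D(a)$. Matching these with the valuations of $\omega_D$ and $\delta_D$ gives a factorization of the form $a = \tau(c)\,\omega_D^{\varepsilon_1} \delta_D^{\varepsilon_2}\, c'$ up to symmetric unit modifications; a standard Hermitian/involution bookkeeping (using that $a$ and the $\omega_D^{\varepsilon_i}\delta_D^{\varepsilon_j}$ are all symmetric and $2 \in R^\times$) upgrades this to the equality $a = b^2 \cdot u \omega_D^{\varepsilon_1}\delta_D^{\varepsilon_2}$ for some $b \in \Lambda$, where $\varepsilon_i \in \{0,1\}$ is the parity of the relevant exponent. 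This works because $\omega_D^2$ and $\delta_D^2$ differ from $\omega$ and $\delta$ by symmetric units of $\Lambda^\times$, so pairs of factors of $\omega_D$ (and of $\delta_D$) are absorbed into squares in $\Lambda$ two at a time, collapsing $r$ and $s$ to their parities.

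The main obstacle will be $\tau$-compatibility in the case $\lambda = u\omega$, where $L/K$ is itself ramified along $(\omega)$ and $\tau(\omega_1) = -\omega_1$, so the local uniformizer of the maximal order at $(\omega_1)$ is not automatically fixed by $\tau$. The symmetrization argument has to actually produce a symmetric element that remains a uniformizer; this requires showing that the group of symmetric local units modulo norms surjects onto the sign/cocycle discrepancy obstructing symmetry of $\pi$, which in turn relies on tameness and $\mathrm{char}\,\widetilde K \neq 2$. A secondary technicality in part (2) is absorbing non-commutative ``error'' units appearing when one factors a symmetric element through products of $\omega_D$ and $\delta_D$; this is handled by an explicit computation in the tame symbol algebra (as in \cite{wu} and \cite[Sect.~9--10]{parsur}), and ultimately reduces to the fact that the residue division algebras at $(\omega_1)$ and $(\delta)$ are cyclic.
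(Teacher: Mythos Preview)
The paper does not prove this theorem; it is quoted verbatim as a black box from \cite[Lemma 3.7]{wu} and \cite[Lemma 9.3]{parsur}, so there is no in-paper proof to compare against. Your sketch is broadly along the lines of those references: build the maximal order as a reflexive intersection of local maximal orders (Auslander--Goldman over a 2-dimensional regular base), use tameness at the two possibly ramified primes to get uniformizers, and symmetrize using $2 \in R^\times$.

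Two points of caution on your write-up. First, the paper's statement contains a typo: condition (1) should read $\tau(\omega_D)=\omega_D$ and $\tau(\delta_D)=\delta_D$ (symmetric elements of $\Lambda$), not $\tau(\omega_D)=\omega$; you interpreted this correctly. Second, your treatment of part (2) is too loose. The phrase ``up to a square factor in $\Lambda$'' refers to Hermitian equivalence of one-dimensional forms, i.e.\ replacing $a$ by $\tau(c)\,a\,c$ for suitable $c$, not to a literal factorization $a = b^2 \cdot u\,\omega_D^{\varepsilon_1}\delta_D^{\varepsilon_2}$ with $b \in \Lambda$. In a noncommutative $D$ the element $b^2$ is typically not symmetric, and the ``standard Hermitian/involution bookkeeping'' you invoke is exactly where the work lies: in the cited references this step is carried out by an explicit computation in the local tame cyclic algebra, tracking how conjugation by the uniformizer interacts with the residue involution. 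Your outline acknowledges this obstacle but does not actually dispatch it.
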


\begin{rem}{\label{7.4.2}} If $h=<a_1, a_2, \dots, a_n>$ is a Hermitian form over $(D, \tau)$, then $\tau(a_i)=a_i$ for all $i$. By Theorem \ref{7.4.1}, $h \cong h_1 \bot \omega_D h_2 \bot \delta_D h_3 \bot \omega_D \delta_D h_4$, where $h_j=<b_{1}^j, \cdots, b_{m_j}^j>$ with $b_{s}^j \in \Lambda^{\times}$ for all $s$ and all $j$.
\end{rem}
The following is a consequence of the result of Larmour (\cite{larmour}) for Hermitian forms. It plays the same role as the Theorem of Springer for quadratic forms.
\begin{thm}[{\cite[Cor. 9.5]{parsur}}] \label{7.4.3}
Let $h$ be a Hermitian form over $(D, \tau)$. If $h$ is isotropic over $\mathrm{Frac}(\widehat{R_{(\omega)}})$ or $\mathrm{Frac}(\widehat{R_{(\delta)}})$, then it is isotropic over $K$. 
\end{thm}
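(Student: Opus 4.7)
The plan is to mimic the proof of Theorem \ref{4.5}, replacing Springer's theorem for quadratic forms by Larmour's Springer-type theorem for Hermitian forms. The key observation is that the algebraic setup of $R$ (a complete regular two-dimensional local ring with regular system of parameters $\omega,\delta$) parallels the setup encountered in the proof of Theorem \ref{4.5} (with $\mathcal{O}^{\circ}(U_x)$ in place of $R$, and $\alpha,\beta$ in place of $\omega,\delta$).

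First I would invoke Theorem \ref{7.4.1} to fix an $R$-maximal order $\Lambda \subseteq D$ stable under $\tau$, together with elements $\omega_D, \delta_D \in \Lambda$ lifting $\omega,\delta$ via the reduced norm in the sense of condition (2) of \emph{loc.cit.}. Using Remark \ref{7.4.2}(2), I would then write
$$h \cong h_1 \bot \omega_D h_2 \bot \delta_D h_3 \bot \omega_D \delta_D h_4,$$
where each $h_j$ is a diagonal Hermitian form over $(\Lambda, \tau)$ with all coefficients in $\Lambda^{\times}$. This reduction is the Hermitian analogue of the decomposition obtained in the proof of Theorem \ref{4.5} via Corollary \ref{4.4}.

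Next, assume that $h$ is isotropic over $\mathrm{Frac}(\widehat{R_{(\omega)}})$, a complete discretely valued field with uniformizer $\omega$ (the symmetric case over $\mathrm{Frac}(\widehat{R_{(\delta)}})$ is handled identically). Applying Larmour's Springer-type theorem (\cite{larmour}), the isotropy of $h$ over $\mathrm{Frac}(\widehat{R_{(\omega)}})$ is equivalent to the isotropy of at least one of the ``residual'' Hermitian forms
$$h_1 \bot \delta_D h_3 \quad \text{or} \quad h_2 \bot \delta_D h_4$$
over the residue field $\mathrm{Frac}(R/(\omega))$, taken with respect to the induced involution on the appropriate residue algebra. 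Without loss of generality, assume $h_1 \bot \delta_D h_3$ is isotropic there. Since $R$ is complete with maximal ideal $(\omega,\delta)$, the quotient $R/(\omega)$ is itself a complete discrete valuation ring with uniformizer $\delta$, so a second application of Larmour's theorem shows that one of $h_1$ or $h_3$ is isotropic over the double residue field $R/(\omega,\delta) = \widetilde{K}$.

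Finally, say $h_1$ is isotropic over $\widetilde{K}$. Since the coefficients of $h_1$ lie in $\Lambda^{\times}$, the corresponding projective variety $X_{h_1}$ has a proper smooth model over $R$. As $R$ is complete local and hence Henselian, Hensel's lifting property provides an $R$-point of $X_{h_1}$, and therefore a non-trivial zero of $h_1$ over $K = \mathrm{Frac}(R)$; consequently $h$ is isotropic over $K$. The main subtlety, which should be handled by the careful choice of $\Lambda$ in Theorem \ref{7.4.1}, is ensuring that the residue Hermitian forms appearing in each application of Larmour's theorem are indeed well-defined over the residue division algebras of $D$; this is precisely the reason one needs both the $\tau$-stability of $\Lambda$ and the compatibility $\tau(\omega_D)=\omega$, $\tau(\delta_D)=\delta$ from \emph{loc.cit.}
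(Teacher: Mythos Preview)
The paper does not prove this statement; Theorem~\ref{7.4.3} is simply quoted from \cite[Cor.~9.5]{parsur} with no argument given, so there is no ``paper's own proof'' to compare against. Your sketch is a reasonable reconstruction of the expected argument and is precisely the strategy the paper itself advertises in Remark~\ref{7.1} and Remark~\ref{evertete?}: reduce via Theorem~\ref{7.4.1} and Remark~\ref{7.4.2}(2) to the decomposition $h_1 \bot \omega_D h_2 \bot \delta_D h_3 \bot \omega_D\delta_D h_4$, apply Larmour's Springer-type theorem twice (once for each parameter), and finish by a Hensel lift of the unit-coefficient piece.

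One point deserves more care than you give it. Larmour's theorem requires a Henselian valuation on the division algebra extending the given one on the base, and produces residual Hermitian forms over the \emph{residue division algebra} equipped with the \emph{residue involution}. For the two successive applications to chain correctly, you need that $D\otimes_K \mathrm{Frac}(\widehat{R_{(\omega)}})$ carries such a valuation with uniformizer (essentially) $\omega_D$, that its residue algebra is again a division algebra over $\mathrm{Frac}(R/(\omega))$ carrying the structure needed for a second pass with $\delta_D$, and that the unit-coefficient forms $h_j$ really descend to Hermitian forms over these residue data. This is exactly what the hypotheses on $D$ (unramified on $S$ except at $(\omega_1)$ and $(\delta)$), the coprimality $(\deg D,\mathrm{char}\,\widetilde{K})=1$, and the $\tau$-stability of the maximal order $\Lambda$ are there to guarantee, but the verification is the actual content of \cite[\S9]{parsur} and is not automatic from the statement of Theorem~\ref{7.4.1} alone. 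You flag this correctly at the end, but it is the heart of the matter rather than a side remark.
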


\begin{rem}\label{evertete?} 
Thanks to Theorem~\ref{7.3}(2), the existence of rational points on $X:=X_h(n_1, n_2, \dots, n_r)$ is equivalent to questions of isotropy of the Hermitian form $h$. By Remark~\ref{7.4.2} and Theorem~\ref{7.4.3}, we can reduce to particular Hermitian forms $h$ of the type $<a_1, \dots, a_n>$, with~${a_i \in \Lambda^{\times}}$. The corresponding variety $X$ will then satisfy the smoothness hypotheses of Section \ref{section1}.
\end{rem}

\subsubsection{The Hasse principle for $G$} We continue using the same notation as in Subsect.~\ref{subsection7.1}. 
 Let $X/F$ be a projective homogeneous variety under $G=\textit{U}(A, \sigma, h)$.  

\begin{thm}[{\cite[Thm. 4.4]{wu}, \cite[Thm. 12.3]{parsur}}] \label{7.5} Suppose $(2 \mathrm{ind}(A), \mathrm{char} \ \widetilde{k})=1$. Additionally, if $A=L$, then assume the rank of the Hermitian form $h$ is at least $2$. If $X(F_v) \neq \emptyset$ for all $v \in V(F)$ discrete, then~${X(F) \neq \emptyset}.$ 
\end{thm}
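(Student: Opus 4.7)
The proof mirrors the template of Theorem~\ref{4.5}, with Larmour's Springer-type theorem (Theorem~\ref{7.4.3}) and the maximal-order result of Theorem~\ref{7.4.1} replacing classical diagonalization and the quadratic Springer theorem, together with the structural reductions of Theorem~\ref{7.3} used up front. First, by Theorem~\ref{7.3}(3)--(4), the bijection $X\mapsto X_0$ between projective homogeneous $F$-varieties over $U(A,\sigma,h)$ and over $U(D,\tau,h')$ preserves rational points over every field extension of $F$, so we may replace $A$ by the division algebra $D$ Brauer-equivalent to it. By Theorem~\ref{7.3}(1) we may further assume $X = X_h(n_1,\ldots,n_r)$ with $0<n_1<\cdots<n_r\leqslant \deg D/2$, and by Theorem~\ref{7.3}(2) the assertion $X(F)\neq\emptyset$ splits into the two conditions (A)~$X_h(n_r)(F)\neq\emptyset$ and (B)~$\mathrm{ind}(D)\mid n_i$ for every $i$. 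Proposition~\ref{7.4} takes care of (B). Since $G$ is a rational reductive group and $X_h(n_r)$ is projective homogeneous under it, $G$ acts strongly transitively on $X_h(n_r)$; by Theorem~\ref{theoremval} it therefore suffices, for (A), to show $X_h(n_r)(F_v)\neq\emptyset$ for every $v\in V(F)$, equivalently (Remark~\ref{xval}) $X_h(n_r)(\mathscr{M}_x)\neq\emptyset$ for every $x\in C^{\mathrm{an}}$.

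Let $Z\subseteq C$ be a finite Zariski closed set containing the bad reduction locus of $X_h(n_r)$ together with the ramification locus of $D$ and of $L/F$. For each $z\in Z$, viewed as a rigid point of $C^{\mathrm{an}}$, the hypothesis and Remark~\ref{1.4.1} yield a strict affinoid neighborhood $V_z$ with $X(\mathscr{M}(V_z))\neq\emptyset$. Fix a proper sncd model $\mathscr{C}$ of $C$ over $k^\circ$ corresponding (via Corollary~\ref{lordan1}) to a vertex set $S\supseteq\bigcup_z\partial V_z$, and let $\pi\colon C^{\mathrm{an}}\to\mathscr{C}_s$ be the associated specialization morphism. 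For $x\in C^{\mathrm{an}}\setminus(\bigcup_z V_z\cup S)$, set $P_x=\pi(x)$, $U_x=\pi^{-1}(P_x)$, and $R=\mathcal{O}^\circ(U_x)=\widehat{\mathcal{O}_{\mathscr{C},P_x}}$ (Theorem~\ref{bosh}). As in the proof of Theorem~\ref{2.5}, $P_x\notin\pi(Z)$, so on $\mathrm{Spec}\,R$ the algebra $D$ is unramified away from the two branches cut out by a regular system of parameters $\omega,\delta$ of $R$ (Remark~\ref{4.1}); using Corollary~\ref{4.4} applied to the scalar $\lambda$ cutting out $L$, we may arrange that $\lambda = u$ or $\lambda = u\omega$ (up to squares) with $u\in R^\times$. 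Theorem~\ref{7.4.1} then produces an $R$-maximal order $\Lambda\subset D$ with $\tau(\Lambda)=\Lambda$ and distinguished elements $\omega_D,\delta_D$, and Remark~\ref{7.4.2}(2) supplies the decomposition $h\cong h_1\perp\omega_D h_2\perp\delta_D h_3\perp\omega_D\delta_D h_4$ with each $h_j$ having all its coefficients in $\Lambda^\times$.

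The discretely valued fields $\mathrm{Frac}\,\widehat{R_{(\omega)}}$ and $\mathrm{Frac}\,\widehat{R_{(\delta)}}$ are completions of $F$ at valuations in $V(F)$ (Remark~\ref{1.6bis}), so the hypothesis supplies $X_h(n_r)$-points over each. Iterating Larmour's Springer-type theorem (Theorem~\ref{7.4.3})---first across the parameter $\omega$, then across $\delta$---propagates the existence of an $n_r$-dimensional totally isotropic submodule from these fraction fields down to a residual Hermitian form with all coefficients in the unit group of $\Lambda/(\omega_D,\delta_D)$, living over $\widetilde K=R/(\omega,\delta)$. Since this residual form is regular with unit coefficients, the corresponding variety of $n_r$-dimensional totally isotropic submodules has a smooth proper model over $R$, and the Henselian-couple lifting used in Lemma~\ref{1.5.2} (applied first to $(R,(\omega))$, then to $(R/(\omega),(\delta))$) extends the residue solution to an $R$-point. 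This yields $X_h(n_r)(\mathscr{M}(U_x))\neq\emptyset$, and hence $X_h(n_r)(\mathscr{M}_x)\neq\emptyset$; the exceptional case $A=L$ (where $\mathrm{rk}(h)\geqslant 2$ is assumed) is handled by applying Theorem~\ref{4.5} to the associated trace form of $h$.

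The delicate point---and the main obstacle---is the passage from \emph{vector}-isotropy in Theorem~\ref{7.4.3} to the existence of an $n_r$-dimensional \emph{totally isotropic submodule}: Larmour's theorem as quoted only produces a single isotropic vector, and one must either invoke its full Witt-decomposition enhancement (relating the Witt index of $h$ over $K$ to the Witt indices of the residue forms) or iterate the procedure by splitting off a hyperbolic Hermitian plane at each stage. The bookkeeping---verifying in particular that the involution induced on $\Lambda/(\omega_D)$ remains of the correct kind and that the residual Hermitian form stays regular after each splitting---is carried out in \cite{wu} and \cite{parsur}, and transferring it to the geometric framework of Section~\ref{section1} requires no essentially new idea beyond what has already been developed here.
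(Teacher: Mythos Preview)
Your overall architecture---the reductions via Theorem~\ref{7.3} and Proposition~\ref{7.4}, the pointwise argument on $C^{\mathrm{an}}$, the local ring $R=\mathcal{O}^\circ(U_x)$---matches the paper. But the core local step diverges from the paper's proof, and the divergence hides a genuine gap.

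The paper does \emph{not} descend to the residue field $\widetilde K=R/(\omega,\delta)$ and Hensel-lift back up. Instead it argues by \emph{induction on} $\mathrm{ind}(A\otimes_F F_{\mathscr{C},x})$, refining the model at each step. At a given $x$ one passes to the division algebra $D_x$ Brauer-equivalent to $A\otimes_F F_{\mathscr{C},x}$ (not to the global $D$ you fix at the outset), then refines to a model $\mathscr{C}_1$ so that the reduced norms $Nrd_{D_x}(a_i)$ of the Hermitian-form coefficients become monomials in a regular system of parameters (\cite[Lemma~4.3]{wu}). If $D_x$ fails to remain a division algebra after the refinement, the index has dropped and one invokes the inductive hypothesis; if it remains division, Theorem~\ref{7.4.3} gives isotropy directly over $F_{\mathscr{C}_1,x}$---not over a residue field---and a separate induction from \cite{parsur} converts isotropy into an $X_{h'_x}(m')$-point. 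The base case of index $1$ is settled by \cite[Cor.~4.7]{hhk10}.

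Your sketch omits this index induction entirely. You fix $D$ globally, but Theorem~\ref{7.4.1} requires a division algebra over the \emph{local} center, and $D\otimes_F F_{\mathscr{C},x}$ may well split or drop index; you have no mechanism for this case. More seriously, the Hensel-lift step is unjustified: even when the Hermitian form has all coefficients in $\Lambda^\times$, the variety $X_h(n_r)$ need not have a smooth proper model over $R$, because $D$ itself may be ramified along $(\omega)$ or $(\delta)$---this is precisely why the argument must stay over $K=\mathrm{Frac}\,R$ and invoke Theorem~\ref{7.4.3}, rather than pass to $\widetilde K$. Your closing paragraph defers the bookkeeping to \cite{wu} and \cite{parsur}, but what is actually carried out there is the index-induction strategy, not the residue-and-lift strategy you outline.
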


\begin{proof} By Theorem \ref{7.3}, there exists a sequence of integers $0< n_1< \dots < n_r < \deg{A}$ such that $X \cong X_h(n_1, n_2, \dots, n_r).$ By \emph{loc.cit.} and Proposition~\ref{7.4}, we may assume $X=X_h(n_r).$ It suffices to show $X_h(n_r)(F) \neq \emptyset$.
Let $Z \subseteq C$ be a set of rigid points containing the ramification locus of $A$. For any $z \in Z$, let $V_z \subseteq C^{\mathrm{an}}$ be a strict affinoid neighborhood of $z$ such that $X_h(n_r)(\mathscr{M}(V_z)) \neq \emptyset$. Let $\mathscr{C}$ be a proper sncd model of $C^{\mathrm{al}}$ corresponding to a vertex set $S$ such that $\bigcup_{z \in Z} \partial{V_z} \subseteq S$ (Theorem~\ref{lordan}). Let $\pi_{\mathscr{C}}: C \rightarrow \mathscr{C}_s$ be the specialization morphism. If $x \in \bigcup_{z \in Z} V_z \cup S$, then $X(\mathscr{M}_x)\neq \emptyset$.

Let $x \in C \backslash (\bigcup_{z \in Z} V_z \cup S)$. Set $P_x:=\pi_{\mathscr{C}}(x)$ and $U_{\mathscr{C},x}:=\pi_{\mathscr{C}}^{-1}(P_x)$. By Theorem~\ref{bosh}, $\widehat{\mathcal{O}_{\mathscr{C}, P_x}}=\mathcal{O}^{\circ}(U_{\mathscr{C},x})$, so $\mathcal{O}^{\circ}(U_{\mathscr{C},x})$ is a complete regular local ring of dimension $2$. Let $\omega, \delta \in \mathcal{O}^{\circ}(U_{\mathscr{C},x})$ be generators of the maximal ideal as in  Corollary~\ref{4.4}. Set $F_{\mathscr{C},x}:=\mathrm{Frac} \ \mathcal{O}^{\circ}(U_{\mathscr{C},x})$.
Remark that $L \otimes_F F_{\mathscr{C},x} \cong F_{\mathscr{C},x}[\sqrt{\lambda}]$. For a fine enough model $\mathscr{C}$ (or, equivalently, large enough $S$), we may assume:
\begin{enumerate}
\item[(a)] up to multiplication by a square, $\lambda \in \mathcal{O}^{\circ}(U_{\mathscr{C},x})$ is either of the form $u$ or $u \omega$, where $u \in \mathcal{O}^{\circ}(U_{\mathscr{C},x})^{\times}$ (\emph{cf.} \cite[Lemma 4.2]{wu});
\item[(b)] the integral closure $R'$ of $\mathcal{O}^{\circ}(U_{\mathscr{C},x})$ in $F_{\mathscr{C},x}[\sqrt{\lambda}]$ has $\omega_1, \delta$ as generators of the maximal ideal, where $\omega_1:=\omega$ if $\lambda=u$ and $\omega_1:=\sqrt{u \omega}$ if $\lambda=u\omega$ (\emph{cf.} \cite[Sect.~9]{parsur});
\item[(c)]  $A$ is unramified on $R'$ except possibly at $(\omega_1)$ and $(\delta)$ (Corollary \ref{4.4}). 
\end{enumerate}
For any model $\mathscr{C}_1$ of $C^{\mathrm{al}}$ refining $\mathscr{C}$, $U_{{\mathscr{C}_1,x}} \subseteq U_{\mathscr{C},x}$, so $F_{\mathscr{C}_1,x} \subseteq F_{\mathscr{C},x}$.
As in \cite[Thm.~12.2]{parsur}, we prove by induction on $\mathrm{ind}(A \otimes_F F_{\mathscr{C},x})$ the existence of a proper sncd model $\mathscr{C}_1$ of $C^{\mathrm{al}}$ refining~$\mathscr{C}$ such that $X_h(n_r)(F_{\mathscr{C}_1,x}) \neq \emptyset$. 

Suppose $\mathscr{C}$ is a proper sncd model of $C^{\mathrm{al}}$ satisfying the conditions (a), (b), (c). If $\mathrm{ind}(A \otimes_F F_{\mathscr{C},x})=1$, $h$ corresponds to a quadratic form $q_h$ over $F_{\mathscr{C},x}$, and $h$ is isotropic if and only if $q$ is. One reduces to a question of isotropy of the quadratic form $q_h$ over $F_{\mathscr{C},x}$ (see proof of \cite[Thm. 12.2]{parsur}). Then, from \cite[Cor.~4.7]{hhk10}, $X_h(n_r)(F_{\mathscr{C},x}) \neq \emptyset$.
 Let $s \in \mathbb{N}$ be such that $s \geqslant 2$. Let $\mathscr{C}$ be a proper sncd model of $C^{\mathrm{al}}$ satisfying properties (a), (b), (c), and $\mathrm{ind}(A \otimes_F F_{\mathscr{C},x})< s$. Assume then that there exists a proper sncd model $\mathscr{C}_1$ refining~$\mathscr{C}$ such that $X_h(n_r)(F_{\mathscr{C}_1,x}) \neq \emptyset$. 
\begin{sloppypar}
Now let $\mathscr{C}$ be a proper sncd model of $C^{\mathrm{al}}$ satisfying conditions (a), (b), (c) and such that  $\mathrm{ind}(A \otimes_F F_{\mathscr{C},x})=s.$ Let~$D_x$ be the division algebra Brauer equivalent to $A \otimes_F F_{\mathscr{C},x}$ over $F_{\mathscr{C},x}$. Then ${\deg{D_x} \geqslant 2}.$ Let $(D_x, \tau_x, h_x)$ be the structure induced on~$D_x$ by ${(A \otimes_F F_{\mathscr{C},x}, \sigma, h)}$. By Theorem \ref{7.3}, there exists $X_{h_x}(m)/F_{\mathscr{C},x}$ homogeneous over $\textit{U}(D_x, \tau_x, h_x)$ such that ${X_{h_x}(m)(K) \neq \emptyset} \iff X_h(n_r)(K) \neq \emptyset$ for all extensions $K/F_{\mathscr{C},x}$. 
\end{sloppypar}
Let $\Lambda_x, \omega_x, \delta_x$ be as in Theorem \ref{7.4.1}. Let $<a_1, a_2, \dots, a_n>$ be a diagonal form of~$h_x$ with $a_i \in \Lambda_x$; then $\tau(a_i)=a_i$ for all $i$. Set $b_i:=Nrd_{D_x}(a_i) \in F_{\mathscr{C},x}^{\times}$. Let $\mathscr{C}_1$ be a proper sncd model refining $\mathscr{C}$ constructed via \cite[Lemma 4.3]{wu} for $b_1, b_2, \dots, b_n$. We may assume that it still satisfies conditions (a), (b), (c). By Theorem \ref{lordan}, $\mathscr{C}_1$ corresponds to a vertex set $S_1 \supseteq S$ of $C$. 
By construction, $\mathcal{O}^{\circ}(U_{\mathscr{C}_1,x})$ has generators $\omega_{1,x}, \delta_{1,x}$ of the maximal ideal such that $b_i=Nrd_{D_x}(a_i)=u_i \omega_{1,x}^{r_i} \delta_{1,x}^{s_i}$, $u_i \in \mathcal{O}^{\circ}(U_{\mathscr{C}_1,x})^{\times}$, $r_i, s_i \in \mathbb{Z}$, $i \in \{1,2,\dots, n\}.$  If $D_x \otimes_{F_{\mathscr{C},x}} F_{\mathscr{C}_1,x}$ is \emph{not} a division algebra, then $\mathrm{ind}(A \otimes_F F_{\mathscr{C}_1,x}) < \mathrm{ind}(A \otimes_F F_{\mathscr{C},x})$, so we may conclude by the inductive assumption that there exists a proper sncd model $\mathscr{C}_2$ refining~$\mathscr{C}_1$  such that $X_h(n_r)(F_{\mathscr{C}_2,x}) \neq \emptyset$.

If $D_x \otimes_{F_{\mathscr{C},x}} F_{\mathscr{C}_1,x}$ \emph{is} a division algebra, $h_x':=<a_1, a_2, \dots, a_n>$ is the Hermitian form induced by $h_x$, and now the $a_i$
 satisfy the properties of Theorem \ref{7.4.1}(2). Let $X_{h_x'}(m')/F_{\mathscr{C}_1,x}$ be the  homogeneous variety corresponding to $X_{h_x}(m) \times_{F_{\mathscr{C},x}} F_{\mathscr{C}_1,x}$.  Set $F_1:=\mathrm{Frac}(\widehat{\mathcal{O}^{\circ}(U_{{\mathscr{C}_1,x}})_{(\omega_{1,x})}})$ and $F_2:=\mathrm{Frac}(\widehat{\mathcal{O}^{\circ}(U_{\mathscr{C}_1,x})_{(\delta_{1,x})}}).$ These are both complete discretely valued fields containing~$F$, and by Remark~\ref{1.6bis},  $X_{h_x}(m)(F_j) \neq \emptyset$, implying $X_{h_x'}(m')(F_j) \neq \emptyset$ for $j=1,2.$
Using an induction argument (see proof of \cite[Thm. 12.2]{parsur}), the problem is reduced to one of isotropy of $h_x'$. By assumption, $h_x'$ is isotropic over~$F_j$, $j=1,2,$ so, by Theorem \ref{7.4.3}, it is isotropic over $F_{\mathscr{C}_1,x}$. Hence $X_{h_x'}(m')(F_{\mathscr{C}_1,x})\neq \emptyset$. Consequently, $X_{h_x}(m)(F_{\mathscr{C}_1,x}) \neq \emptyset$, so $X_{h}(n_r)(F_{\mathscr{C}_1,x}) \neq \emptyset$.

We have shown that there exists a proper sncd model $\mathscr{C}$ of $C^{\mathrm{al}}$ satisfying ${X_h(n_r)(F_{\mathscr{C},x}) \neq \emptyset}$. As $F_{\mathscr{C},x} \subseteq \mathscr{M}_x$, we obtain that $X_h(n_r)(\mathscr{M}_x) \neq \emptyset$. 
Hence,  for all $x \in C$, $X(\mathscr{M}_x) \neq \emptyset$, implying ${X_h(n_r)(F) \neq \emptyset}$ (\emph{cf.} \cite[Thm.~3.11]{une}).
\end{proof}
\subsection{Special unitary groups (\emph{cf.} \cite{wu})} \label{subsection7.2}
If we let $A$ be a central simple algebra with an involution $\sigma$ of the \emph{first} kind, and $h$ a Hermitian form on $(A, \sigma),$ then the group $\textit{SU}(A, \sigma, h)$ is a rational reductive linear algebraic group (\emph{cf.}  \cite[Lemma 5]{plati}). By using techniques similar to those in the case of unitary groups (the difference being that now ${L=F=Z(A)}$), one can prove the Hasse principle also holds for homogeneous spaces over special unitary groups. This was done in \cite{wu}. We refrain from translating this case to our setting as it would largely amount to repeating the arguments from Subsection \ref{unitary}.  

\begin{bibdiv}
\begin{biblist}

\bib{Ber90}{book}{
    author={Berkovich, V. G.},
    title={Spectral theory and analytic geometry over non-Archimedean fields},
    series={Mathematical Surveys and Monographs},
    volume={33},
    publisher={American Mathematical Society},
    address={Providence, RI},
    year={1990}
}

\bib{ber}{article}{
    author={Berkovich, V. G.},
    title={Vanishing cycles for formal schemes},
    journal={Invent. Math.},
    volume={115},
    number={3},
    pages={539--571},
    year={1994}
}

\bib{bo1}{article}{
    author={Bosch, S.},
    title={Eine bemerkenswerte Eigenschaft der formellen Fasern affinoider R\"{a}ume},
    journal={Math. Ann.},
    volume={229},
    number={1},
    pages={25--45},
    year={1977}
}

\bib{merkchern}{article}{
    author={Chernousov, V.},
    author={Merkurjev, A.},
    title={R-equivalence and special unitary groups},
    journal={J. Algebra},
    volume={209},
    number={1},
    pages={175--198},
    year={1998}
}

\bib{che}{article}{
    author={Chernousov, V. I.},
    title={The Hasse principle for groups of type {E}8},
    journal={Dokl. Akad. Nauk SSSR},
    volume={306},
    number={5},
    pages={1059--1063},
    year={1989}
}

\bib{plati}{article}{
    author={Chernousov, V. I.},
    author={Platonov, V. P.},
    title={The rationality problem for semisimple group varieties},
    journal={J. Reine Angew. Math.},
    volume={504},
    pages={1--28},
    year={1998}
}

\bib{cthhkps}{article}{
    author={Colliot-Th\'{e}l\`{e}ne, J.-L.},
    author={Harbater, D.},
    author={Hartmann, J.},
    author={Krashen, D.},
    author={Parimala, R.},
    author={Suresh, V.},
    title={Local-global principles for constant reductive groups over semi-global fields},
    journal = {Michigan Math. J.},
  volume = {72},
  year = {2022},
  pages = {77--144},
}

\bib{ctps}{article}{
    author={Colliot-Th\'{e}l\`{e}ne, J.-L.},
    author={Parimala, R.},
    author={Suresh, V.},
    title={Patching and local-global principles for homogeneous spaces over function fields of p-adic curves},
    journal={Comment. Math. Helv.},
    volume={87},
    number={4},
    pages={1011--1033},
    year={2012}
}

\bib{dejong}{article}{
  author = {de Jong, A. J.},
  title = {Crystalline Dieudonné module theory via formal and rigid geometry},
  journal = {Publications Mathématiques de l’Institut des Hautes Scientifiques},
  volume = {82},
  pages = {5--96},
  year = {1995},
}

\bib{duc}{book}{
    author={Ducros, A.},
    title={La structure des courbes analytiques},
    note={\url{https://webusers.imj-prg.fr/~antoine.ducros/trirss.pdf}}
}

\bib{lordan}{article}{
 author = {Fantini, Lorenzo},
  author = {Turchetti, Daniele},
  title = {Triangulations of non-archimedean curves, semi-stable reduction, and ramification},
  journal = {Ann. Inst. Fourier (Grenoble)},
  volume = {73},
  number = {2},
  year = {2023},
  pages = {695--746},
}

\bib{gabber}{article}{
    author={Gabber, O.},
    author={Liu, Q.},
    author={Lorenzini, D.},
    title={The index of an algebraic variety},
    journal={Invent. Math.},
    volume={192},
    number={3},
    pages={567--626},
    year={2013}
}

\bib{gillen}{article}{
    author={Gille, Ph.},
    author={Neher, E.},
    title={Springer's odd degree extension theorem for quadratic forms over semilocal rings},
    journal={Indagationes Mathematicae},
    note={to appear, 2021}
}

\bib{gruson}{article}{
    author={Gruson, L.},
    title={Une propri\'{e}t\'{e} des couples hens\'{e}liens},
    booktitle={Colloque d'Alg\`{e}bre Commutative (Rennes, 1972)},
    pages={13},
    year={1972}
}

\bib{guhsur}{article}{
    author={Guhan, J.},
    author={Suresh, V.},
    title={Local-global principle for hermitian spaces over semi-global fields},
    note={\url{https://arxiv.org/abs/2203.09651}},
    year={2022}
}

\bib{hhk}{article}{
    author={Harbater, D.},
    author={Hartmann, J.},
    author={Krashen, D.},
    title={Applications of patching to quadratic forms and central simple algebras},
    journal={Invent. Math.},
    volume={178},
    number={2},
    pages={231--263},
    year={2009}
}

\bib{hhk10}{article}{
    author={Harbater, D.},
    author={Hartmann, J.},
    author={Krashen, D.},
    title={Refinements to patching and applications to field invariants},
    journal={Int. Math. Res. Not. IMRN},
    number={20},
    pages={10399--10450},
    year={2015}
}

\bib{harder1}{article}{
    author={Harder, G.},
    title={\"{U}ber die Galoiskohomologie halbeinfacher Matrizengruppen. {I}},
    journal={Math. Z.},
    volume={90},
    pages={404--428},
    year={1965}
}

\bib{harder2}{article}{
    author={Harder, G.},
    title={\"{U}ber die Galoiskohomologie halbeinfacher Matrizengruppen. {II}},
    journal={Math. Z.},
    volume={92},
    pages={396--415},
    year={1966}
}

\bib{harder3}{article}{
    author={Harder, G.},
    title={\"{U}ber die Galoiskohomologie halbeinfacher algebraischer Gruppen. {III}},
    journal={J. Reine Angew. Math.},
    volume={274/275},
    pages={125--138},
    year={1975}
}

\bib{kne1}{article}{
    author={Kneser, M.},
    title={Galois-Kohomologie halbeinfacher algebraischer Gruppen \"{u}ber $p$-adischen K\"{o}rpern. {I}},
    journal={Math. Z.},
    volume={88},
    pages={40--47},
    year={1965}
}

\bib{kne2}{article}{
    author={Kneser, M.},
    title={Galois-Kohomologie halbeinfacher algebraischer Gruppen \"{u}ber $p$-adischen K\"{o}rpern. {II}},
    journal={Math. Z.},
    volume={89},
    pages={250--272},
    year={1965}
}

\bib{involutions}{book}{
    author={Knus, M.-A.},
    author={Merkurjev, A.},
    author={Rost, M.},
    author={Tignol, J.-P.},
    title={The book of involutions},
    series={American Mathematical Society Colloquium Publications},
    volume={44},
    publisher={American Mathematical Society},
    address={Providence, RI},
    year={1998},
    note={With a preface in French by J. Tits}
}

\bib{lam}{book}{
    author={Lam, T. Y.},
    title={Introduction to quadratic forms over fields},
    series={Graduate Studies in Mathematics},
    volume={67},
    publisher={American Mathematical Society},
    address={Providence, RI},
    year={2005}
}

\bib{larmour}{article}{
    author={Larmour, D. W.},
    title={A Springer theorem for Hermitian forms},
    journal={Math. Z.},
    volume={252},
    number={3},
    pages={459--472},
    year={2006}
}

\bib{liulibri}{book}{
    author={Liu, Q.},
    title={Algebraic geometry and arithmetic curves},
    series={Oxford Graduate Texts in Mathematics},
    volume={6},
    publisher={Oxford University Press},
    address={Oxford},
    year={2002},
    note={Translated from the French by Reinie Ern\'{e}}
}

\bib{mart}{article}{
    author={Martin, F.},
    title={Analytic functions on tubes of nonarchimedean analytic spaces},
    journal={Algebra Number Theory},
    volume={11},
    number={3},
    pages={657--683},
    year={2017},
    note={With an appendix by C. Kappen and F. Martin}
}

\bib{doktoratura}{book}{
    author={Mehmeti, V.},
    title={Patching on Berkovich spaces and the local--global principle},
    type={PhD Thesis},
    institution={PhD in Mathematics. University of Caen Normandy},
    year={2019}
}

\bib{une}{article}{
    author={Mehmeti, V.},
    title={Patching over Berkovich curves and quadratic forms},
    journal={Compos. Math.},
    volume={155},
    number={12},
    pages={2399--2438},
    year={2019}
}

\bib{muni}{article}{
    author={Musta{\c{t}}{\u{a}}, M.},
    author={Nicaise, J.},
    title={Weight functions on non-Archimedean analytic spaces and the Kontsevich-Soibelman skeleton},
    journal={Algebr. Geom.},
    volume={2},
    number={3},
    pages={365--404},
    year={2015}
}

\bib{parimala}{article}{
    author={Parimala, R.},
    title={Homogeneous varieties---zero-cycles of degree one versus rational points},
    journal={Asian J. Math.},
    volume={9},
    number={2},
    pages={251--256},
    year={2005}
}

\bib{parsurpreeti}{article}{
    author={Parimala, R.},
    author={Preeti, R.},
    author={Suresh, V.},
    title={Local-global principle for reduced norms over function fields of p-adic curves},
    journal={Compos. Math.},
    volume={154},
    number={2},
    pages={410--458},
    year={2018}
}

\bib{parsur}{article}{
  author = {Parimala, R.},
  author={Suresh, V.},
  title = {Local-global principle for classical groups over function fields of p-adic curves},
  journal = {Comment. Math. Helv.},
  volume = {97},
  year = {2022},
  pages = {255--304},}

\bib{resur}{article}{
    author={Reddy, B. S.},
    author={Suresh, V.},
    title={Admissibility of groups over function fields of p-adic curves},
    journal={Adv. Math.},
    volume={237},
    pages={316--330},
    year={2013}
}


\bib{stacks}{misc}{
  author = {The Stacks project authors},
  title = {The Stacks project},
  note = {\url{https://stacks.math.columbia.edu}},
  year = {2019},
}

\bib{wu}{article}{
    author={Wu, Z.},
    title={Hasse principle for hermitian spaces over semi-global fields},
    journal={J. Algebra},
    volume={458},
    pages={171--196},
    year={2016}
}

\end{biblist}
\end{bibdiv}

\bigskip

{\footnotesize%
 \textsc{Vler\"e Mehmeti}, Sorbonne Université and Paris Cité, CNRS, IMJ-PRG, F-75005 Paris, France \par
  \textit{E-mail address}: \texttt{vlere.mehmeti@imj-prg.fr} 
}
\end{document}